\numberwithin{equation}{section}
\Crefname{ALC@unique}{Line}{Lines}
\newtheorem{theorem}{Theorem}[section]
\newtheorem{lemma}[theorem]{Lemma}
\newtheorem{remark}{Remark}
\newtheorem{defi}[theorem]{Definition}
\DeclareMathOperator*{\esssup}{ess\,sup}
\newcommand\B{\mathcal{B}}
\newcommand\C{\mathcal{C}}
\newcommand\V{\mathbf{V}}
\newcommand\Ss{\mathbb{S}}
\newcommand\eps{\epsilon}
\newcommand\veps{\varepsilon}
\newcommand{\boldvphi}{\boldsymbol{\vphi}}  
\newcommand{\disp}{\displaystyle}
\newcommand{\uvect}{\mathbf{u}}
\newcommand{\qvect}{\mathbf{q}} 
\newcommand{\Div}{\text{div}}
\newcommand{\vphi}{\varphi}
\def\dx{\,\textnormal{d}x}
\def\dt{\textnormal{d}t}
\def\d{\,\textnormal{d}}
\date{}
\title{\bf Weak solutions to the heat conducting compressible self-gravitating flows in  time-dependent domains}
\author{Kuntal Bhandari \thanks{Institute of Mathematics of the  Czech Academy of Sciences, \v{Z}itn\'{a} 25, 11567 Praha 1, Czech Republic;
		 bhandari@math.cas.cz.}
	\and Bingkang Huang \thanks{School of Mathematics,
		Hefei University of Technology,
		Hefei 230009,
		China; bkhuang@whu.edu.cn, bkhuang92@hotmail.com}
	\and \v{S}\'arka Ne\v{c}asov\'a \thanks{Corresponding author, Institute of Mathematics of the  Czech Academy of Sciences, \v{Z}itn\'{a} 25, 11567 Praha 1, Czech Republic; matus@math.cas.cz.}
}
\begin{document}
	\maketitle

\begin{abstract}
	In this paper, we consider 
	the heat-conducting   compressible self-gravitating fluids in time-dependent domains, which typically describe the motion of viscous gaseous stars.
	The flow is governed by the 3-D Navier-Stokes-Fourier-Poisson equations  where  the velocity is supposed to fulfil  the full-slip boundary condition and the temperature on the boundary is given by a non-homogeneous Dirichlet condition. We establish the global-in-time weak solution to the system. Our approach is based on the penalization of the boundary behavior, viscosity, and the pressure in the weak formulation. 
	Moreover, to accommodate the non-homogeneous boundary heat flux, the concept of {\em ballistic energy} is utilized in this work.
\end{abstract}


	{\bf Keywords.} 
	Compressible fluids; Navier-Stokes-Fourier-Poisson system; non-homogeneous boundary; time-dependent domain. 
	
	\smallskip 
	{\bf AMS subject classifications.} 35D30, 35Q35, 76N10.

\section{Introduction and general setting}

\subsection{Problem statement}\label{section-problm-state}

In this article, we study the existence of global-in-time weak solutions to the  flow of {\em viscous gaseous stars} where the spatial domain  is varying with respect to time by means of some given velocity field.
It is by now well-understood that  the stars may be considered as a compressible fluid  (e.g., \cite{S-N-Shorey-book}) and their dynamics are very often shaped and controlled by self-gravitation and high-temperature radiation effects (see for instance \cite{Cox, Ghatak-et-al}).  The mathematical model of such flows is governed  by the 3-D compressible {\em Navier-Stokes-Fourier-Poisson} system (see for instance, \cite{Eduard-Ducomet-2} by B. Ducomet and E. Feireisl): consisting of {\em equation of continuity}, {\em equation of momentum}, {\em energy equation} and {\em Poisson equation}, respectively given by
\begin{align}
	&\partial_t \rho + \Div_x ( \rho \uvect) = 0  ,   \label{continuity-eq} 
	\\
	&\partial_t (\rho \uvect) + \Div_x (\rho \uvect \otimes \uvect)  + \nabla_x p (\rho, \theta)   = \Div_x \Ss  + \rho \nabla_x \Psi  
	,  \label{momentum-eq} \\
	&\partial_t  (\rho e(\rho, \theta)) +  \Div_x(\rho e(\rho, \theta) \uvect)  + 
	\Div_x \qvect + p(\rho, \theta) \Div_x \uvect  = \Ss: \nabla_x \uvect  , \label{temp-eq} 
	\\
	&-\Delta_x \Psi = 4\pi g  \rho  \label{pois-eq}  , \ \ \  \int_{\Omega_t} \Psi \dx  =0 .
\end{align}   
The  density $\rho$, velocity $\uvect$, and absolute temperature $\theta$ are three typical macroscopic quantities that describe the motion of the  fluid, while  $p(\rho, \theta)$ is the pressure, 	$s(\rho, \theta)$ is the specific entropy and  $e(\rho, \theta)$ is the specific internal energy of the fluid, interrelated through the Gibb's equation 
\begin{align}\label{equ-gibbs}
	\theta D s  = D e + p D (1/\rho) ,
\end{align}
where $D$ stands for the total differential.


The fluid under consideration is assumed to be Newtonian, meaning that the viscous stress
tensor $\Ss$ depends linearly on the velocity's gradient. Precisely,   $\Ss$  is given by 
\begin{align}\label{stress_tensor}
	\Ss(\theta, \nabla_x \uvect) = \mu(\theta) \left( \nabla_x \uvect + \nabla^{\top}_x \uvect -\frac{2}{3} \Div_x \uvect \mathbb I  \right) +\eta(\theta) \Div_x \uvect \mathbb I,
\end{align}
with shear viscosity coefficient $\mu(\theta)>0$ and bulk viscosity coefficient $\eta(\theta)\geq 0$. 

In the momentum equation \eqref{momentum-eq}, $\rho \nabla_x \Psi$ is an external force acting on the fluid due to self-gravitation, and the gravitational potential $\Psi$ (of the star) solves  the Poisson equation given by \eqref{pois-eq} with the gravity $g>0$.

Moreover, the heat flux $\qvect$ is determined by the Fourier's law,
\begin{align}\label{Fourier_law}
	\qvect (\theta, \nabla_x \theta)= - \kappa (\theta) \nabla_x \theta , 
\end{align}
with the heat conductivity coefficient $\kappa(\theta)>0$.

\paragraph{Description of the time-dependent domain.}
Let us consider a regular domain  $\Omega_0 \subset \mathbb R^3$ occupied by the fluid at the initial time $t=0$. Then, we investigate the domain $\Omega_\tau$ (at time $t=\tau$) described by a given velocity field $\V(t,x)$ with $t\geq 0$ and $x\in \mathbb R^3$. More specifically, when $\V$ is regular enough,  we solve the associated system of differential equations
\begin{align}\label{trajec-eq}
	\frac{d}{dt} \mathbf X (t,x) =  \V(t,x) , \quad t>0 , \quad \mathbf X(0,x)=x ,
\end{align}
and set 
\begin{align}\label{domains}
	\begin{dcases} 
		\Omega_\tau = \mathbf X(\tau, \Omega_0) , \quad  \Gamma_\tau : = \partial \Omega_\tau, \quad \text{and} \\
		Q_\tau := \cup_{t\in (0,\tau)} \{t\} \times \Omega_t . 	
	\end{dcases}
\end{align}
We assume that the volume of the domain cannot degenerate in time, meaning that 
\begin{align}\label{measurable-condition}
	\exists \, M_0>0 \ \text{ such that } \ |\Omega_\tau|\geq M_0 \quad \forall \tau \in [0,T].
\end{align}
Moreover, we make the following assumption
\begin{align}\label{condition-V}
	\Div_x \V = 0  \ \text{ in the neighborhood of } \Gamma_\tau    \quad \forall \tau \in [0,T].
\end{align}
The condition \eqref{condition-V} is not restrictive.
In fact,  it has been indicated in  \cite[Remark 5.3]{Sarka-et-al-ZAMP} that for a general $\V \in  \C^1([0,T];\C^3_c(\mathbb R^3, \mathbb R^3))$, one 
can find $w \in W^{1,\infty}(Q_T)$ such that $(\V- w)|_{\Gamma_\tau} = 0$ for all $\tau \in [0,T]$ and $\Div_x w=0$ on some
neighborhood of $\Gamma_\tau$;  see also \cite[Section 4.3.1]{Sarka-Kreml-Neustupa-Feireisl-Stebel}.

\paragraph{Boundary conditions.} Here we prescribe the  boundary conditions for the original system \eqref{continuity-eq}--\eqref{pois-eq}. 

\vspace*{.1cm}
\noindent 
$\bullet$ We first impose the Navier-slip boundary conditions
\begin{align}\label{Navier-slip}
	[\Ss \mathbf n]_{\tan} + \alpha [\uvect - \V]_{\tan} = 0 ,   \quad \text{in } \Gamma_t, \text{ for any } t \in [0,T], 
\end{align}
where $\alpha\geq 0$ represents a {\em friction} coefficient and $\Ss$ is the viscous stress tensor. For simplicity, we take $\alpha=0$ which gives the {\em full-slip} condition. 
Furthermore, the impermeability condition for $\uvect$ is given by 
\begin{align}\label{imperm}
	(\uvect - \V)\cdot \mathbf n = 0 ,  \quad \text{in } \Gamma_t, \text{ for any } t \in [0,T].
\end{align}

\vspace*{.1cm}
\noindent 
$\bullet$ The fluid temperature on the lateral boundary of the domain is given by
\begin{align}\label{boundary-theta-1} 
	\theta \big|_{\bigcup_{t\in (0,T)} \big(\{t\}\times  \Gamma_t\big)} = \theta_B,
\end{align}
where $\theta_B=\theta_B(t,x)$ is a strictly positive smooth function. 

\vspace*{.1cm}
\noindent 
$\bullet$ The gravitational potential  $\Psi$ satisfies the Neumann boundary condition 
\begin{align}\label{boundary-Psi}
	\frac{\partial \Psi}{\partial {\mathbf n}} = 0  \quad \text{on } \Gamma_t \ \ \text{for each } t\in [0,T].
\end{align}

\paragraph{Initial conditions.}
The system \eqref{continuity-eq}--\eqref{pois-eq}  is also supplemented with the initial conditions
\begin{align}\label{initial_conditions}
	\begin{dcases} 
		\rho(0, \cdot)= \rho_0 \in L^{\frac{5}{3}}(\Omega_0) , \ \text{ and }   \\
		(\rho \uvect)(0,\cdot) = (\rho \uvect)_0, \ \
		\theta(0,\cdot)=\theta_0 \quad \text{in } \Omega_0 ,
	\end{dcases} 
\end{align}
where we assume that the fluid density is zero outside the domain $\Omega_0$, more precisely,
\begin{align}\label{condition-density}
	\rho_0 \geq 0 \ \ \text{in } \Omega_0, \ \ \rho_0  \not\equiv 0 \  \text{ and } \   \rho_0 =0 \ \text{ in } \ \mathbb R^3 \setminus \Omega_0.
\end{align}
Further, it holds that $0< \underline \theta \leq \theta_0 \leq \overline \theta$  for some positive constants $\underline \theta$ and $\overline \theta$, and 
\begin{align}
	(\rho s)_0 = \rho_0 s(\rho_0, \theta_0) \in L^1(\Omega_0) . 
\end{align}
Beside that, we assume 
\begin{align}\label{initial-energy}
	\mathcal E_0:= \int_{\Omega_0}\left( \frac{1}{2\rho_0} |(\rho \uvect)_0 |^2 +  \rho_0 e(\rho_0,\theta_0)  \right) <+\infty .
\end{align} 

Our goal is to establish the global-in-time existence of weak solutions to the whole system 
\eqref{continuity-eq}--\eqref{pois-eq} in the domain $Q_T$ with the boundary conditions \eqref{Navier-slip}--\eqref{boundary-Psi} and initial conditions \eqref{initial_conditions}--\eqref{initial-energy}.

\subsection{Bibliographic comments and main goal of our work}
The self-gravitating flows have wide applications in astrophysics and the theory of nuclear fluids. In that regard, we mention that
the global in time weak solutions for the compressible barotropic self-gravitating fluids governed by {\em Navier-Stokes-Poisson} equations  has been initially studied by B. Ducomet and E. Feireisl \cite{Eduard-Ducomet-1} in the fixed spatial domain. Later on, they  established the existence theory  of weak solutions for the compressible {\em Navier-Stokes-Fourier-Poisson} (in short N-S-F-P) system in  \cite{Eduard-Ducomet-2}. On the other hand, B. Ducomet et al \cite{Ducoment-Sarka-1} considered a compressible N-S-F-P system describing a motion of a viscous heat-conducting rotating fluid on a thin domain $\Omega_\epsilon=\omega \times (0,\epsilon)$ with a 2-D domain $\omega$ and positive $\epsilon.$ More precisely, the authors in \cite{Ducoment-Sarka-1} proved that the weak solutions in the 3-D domain converge to the strong solutions of the 2-D system  as $\epsilon\to 0$ in the time interval where the strong solution exists.    

\vspace*{.1cm}

In the context of compressible fluids in time dependent domains, we first address the work \cite{Sarka-Kreml-Neustupa-Feireisl-Stebel} by E. Feireisl et al where the existence of weak solutions of the barotropic compressible  Navier-Stokes systems has been addressed on time dependent domains as prescribed in \eqref{trajec-eq}--\eqref{measurable-condition}. Their approach is based on the penalization of the boundary behavior, viscosity and pressure in the weak formulation.  Later on, the existence of weak solutions to  the full {\em Navier-Stokes-Fourier} (in short N-S-F) systems in the time dependent domain has been treated by  O. Kreml et al \cite{sarka-aneta-al-JMPA}, see also \cite{Sarka-et-al-ZAMP}.  We also note here that the compressible micropolar fluids on a time-dependent domain with slip boundary conditions was considered in \cite{HBZ-2022}. Furthermore, the local-in-time existence of strong solutions to the compressible Navier-Stokes on the moving domains was given in \cite{KNP-20}. The global well-posedness of compressible Navier-Stokes equations on a moving domain in the $L^p$-$L^q$ frame was investigated in \cite{KNP-20-springer}.  Recently, the authors in  \cite{Macha-Muha-Necasova-Roy-Srjdan}  studied the existence of a weak solution to a nonlinear fluid-structure interaction model with heat exchange where the shell is governed
by linear thermoelasticity equations and encompasses a time-dependent domain that is filled with a fluid governed by the full N-S-F system. 
In this regard, we also mention the work \cite{Kalousek2023existence}, where the authors
analyze  a system governing the interaction between two compressible mutually noninteracting fluids and a shell of Koiter type that actually gives rise to a time-dependent 3-D domain filled by the fluids.

\vspace*{.1cm}

In fixed spatial domains, the existence theory of compressible barotropic Navier-Stokes systems was developed by P. L. Lions \cite{Lions1996mathematical} and later it has been extended in \cite{Feireisl2001existence}  to a class of physically relevant pressure-density state equations. The existence of weak solutions to the full N-S-F system has been then established by E. Feireisl \cite{Feireisl-NSF-1, Feireisl-NSF-2} and by E. Feireisl and A. Novotn\'{y} in \cite{Feireisl-Novotny-book}.

\vspace*{.1cm}

The global in time weak solutions to the  Navier-Stokes-Fourier system with nonhomogeneous Dirichlet data (for both velocity and temperature) in fixed spatial domain has been rigorously studied by N. Chaudhuri and E. Feireisl \cite{Chauduri-Feireisl}; they also investigated the weak-strong uniqueness result for their system. To handle the nonhomogeneous Dirichlet data  for temperature and boundary heat flux, the authors in \cite{Chauduri-Feireisl} introduced the  concept of 
{\em ballistic energy} (see also \cite{Danica}).

\vspace*{.1cm}

In the present work, we study the weak existence theory for the compressible {\em Navier-Stokes-Fourier-Poisson} equations in  time-dependent domain where we allow  the nonhomogeneous Dirichet condition for the temperature and  {\em non-vanishing heat flux on the boundary}, which is certainly more physical in the viewpoint of the motion of nuclear fluids or gaseous stars, and to the best of our knowledge, this problem has not been considered in the literature yet.  As a matter of fact, we shall intensively  use the concept of {\em ballistic energy} to accommodate the nonhomogeneous heat flux on the spatial boundary. 

\vspace*{.1cm}

Before going to more details, we  note down the constitutive relations that will be used to proceed our work.

\subsection{Hypothesis}\label{Section-hypothesis}
Motivated by \cite{Eduard-Ducomet-2, Feireisl-NSF-1,Feireisl-NSF-2},   let us now make the following set of  assumptions. 

\vspace*{.1cm}
\noindent
$\bullet$ {\bf Viscosity coefficients.}	In agreement with several practical experiments, we consider the viscosity coefficients $\mu(\theta)$ and $\eta(\theta)$ to be continuously differentiable functions depending on the temperature $\theta$, namely $\mu(\theta), \eta(\theta)\in \C^1([0,+\infty))$ and satisfy 
\begin{equation}\label{hypo-mu}
	\begin{aligned}
		&0< \underline \mu (1+\theta) \leq \mu(\theta) \leq \overline \mu (1+\theta), \quad \sup_{\theta \in [0,+\infty)} |\mu^\prime(\theta)| \leq \overline m , \ \ \text{and}\\
		&  0\leq  \underline \eta (1+\theta) \leq \eta(\theta) \leq \overline \eta (1+\theta) . 
	\end{aligned}
\end{equation} 

\vspace*{.1cm}
\noindent 
$\bullet$ {\bf Heat conductive coefficient.} In accordance with the recent work \cite{Chauduri-Feireisl} (see also \cite{Danica}), we need a much stricter assumption on the heat conducting coefficient $\kappa(\theta)$ appearing in the Fourier's law \eqref{Fourier_law}. More precisely, we assume 
$\kappa(\theta) \in \C^1([0,+\infty))$ such that
\begin{equation}\label{hypo-kappa}
	0< \underline{\kappa} (1+\theta^\alpha) \leq \kappa(\theta) \leq  \overline{\kappa}(1+\theta^\alpha) , \quad \text{for } \alpha>6. 
\end{equation}
In above, all the quantities  $\underline \mu$, $\overline \mu$, $\overline m$, $\underline \eta$, $\overline \eta$, $\underline{\kappa}$, $\overline{\kappa}$ are positive.

\vspace*{.1cm}
\noindent 
$\bullet$ {\bf Constitutive relations for pressure, internal energy and entropy.}
The pressure $p(\rho, \theta)$ is supposed to be composed from the interaction between particles of the fluid, and the radiation term due to the temperature. More precisely,  we  consider   
\begin{align}\label{hypo-press}
	p(p, \theta) = p_M(\rho, \theta) + p_R(\theta), 
\end{align} 
where the properties of $p_{M}(\rho,\theta)$ are given below in \eqref{hypo-p_m}, \eqref{hypo-p_m_with_P},  and 
the radiative pressure $p_R(\theta)$ given by 
\begin{align*}
	p_R(\theta)=\frac{a}{3} \theta^4 ,
\end{align*}
with the so-called    Stefan-Boltzmann constant $a>0$. We refer  for instance \cite{buet2004asymptotic} by   C. Buet and  B. Despr{\'{e}}s and 
\cite{ducomet2001simplified} by B. Ducomet for more details about those assumptions.

\vspace*{.1cm}

Similarly, the decomposition of  internal energy $e(\rho, \theta)$ and specific entropy  $s(\rho, \theta)$ read as 
\begin{align}\label{hypo-energy}
	e(\rho, \theta) = e_M(\rho,\theta) + \frac{a}{\rho} \theta^4  
\end{align}
and 
\begin{align}\label{hypo-entropy}
	s(\rho, \theta) = s_M(\rho,\theta) + \frac{4a}{3\rho} \theta^3  
\end{align}
respectively.  We refer \cite{Eduard-Ducomet-2} B. Ducomet and E. Feireisl,  for the details about the hypothesis \eqref{hypo-energy}, \eqref{hypo-entropy}. 
According to the hypothesis of thermodynamic stability, the molecular components  satisfies 
\begin{align}\label{hypo-p_m}
	\frac{\partial p_M}{\partial \rho}  >  0 \quad \forall \, \rho, \, \theta >0  , 
\end{align}
and there exists some positive constant $c>0$ such that
\begin{align}\label{hypo-e_m}
	0 < \frac{\partial e_M}{\partial \theta} \leq c  \quad \forall \, \rho, \, \theta > 0  .
\end{align}
Moreover, it holds that 
\begin{align}\label{hypo-limit-e_m}
	\lim_{\theta \to 0^+} e_M(\rho, \theta) = \underline{e_M}(\rho) >0  \quad \text{for any fixed } \ \rho >0,
\end{align}
and 
\begin{align}\label{hypo-bound_deri_e_m}
	\left| \rho \frac{\partial e_M}{\partial \rho}(\rho, \theta)\right| \leq c {e_M}(\rho, \theta)  \quad \forall \, \rho, \, \theta >0.
\end{align}
We also suppose that there is a function 
\begin{align}\label{hypo_P}
	P\in \C^1[0,\infty), \ \ \ P(0) =0 , \ \ \ P^\prime(0) >0,
\end{align}
and two positive constants $\underline Z, \overline Z$ such that 
\begin{align}\label{hypo-p_m_with_P}
	p_M(\rho, \theta) = \theta^{\frac{5}{2}} P\left(\frac{\rho}{\theta^{\frac{3}{2}}}\right) \ \ \ \text{whenever }  0< \frac{\rho}{\theta^{\frac{3}{2}}} \leq \underline Z , \ \ \text{or} \ \ \frac{\rho}{\theta^{\frac{3}{2}}} > \overline Z,
\end{align}
and satisfying the relation 
\begin{align}\label{hypo-p_m_with_P-2}
	p_M (\rho , \theta) = \frac{2}{3} \rho e_M(\rho, \theta) , \quad \text{for } \frac{\rho}{\theta^{\frac{3}{2}}} > \overline Z .
\end{align}
Based on the above assumption, we derive that 
\begin{align}\label{lower_bound_rho_e}
	\rho e(\rho, \theta) \geq a \theta^4 + \frac{3}{2} p_\infty \rho^{\frac{5}{3}}	.
\end{align}
This estimate can be shown in the following explicit way. First, observe that $P^\prime(Z)>0$ for all $0<Z<\underline Z$ or $Z>\overline Z$. Now, we extend $P$ as a strictly increasing function in $[\underline Z, \overline Z]$ so that we have 
\begin{align}\label{fact-1}
	P^\prime(Z)>0 \quad \forall Z>0.
\end{align} 
Next, by \eqref{hypo-bound_deri_e_m}, \eqref{hypo-p_m_with_P} and \eqref{hypo-p_m_with_P-2}  we infer that
\begin{align}\label{fact-2-1}
	\lim_{Z\to \infty} \frac{P(Z)}{Z^{\frac{5}{3}}} = p_\infty > 0 . 
\end{align}
Using \eqref{hypo-p_m_with_P}, \eqref{hypo-p_m_with_P-2} and \eqref{fact-2-1} one can deduce that
\begin{align}\label{limit-e_M} 
	\lim_{\theta\to 0^+} e_M(\rho, \theta) = \frac{3}{2} \rho^{\frac{2}{3}} p_\infty .
\end{align}
Moreover, $e_M$ is a strictly increasing function of $\theta$  in $(0,\infty)$ (see \eqref{hypo-e_m})  for any fixed $\rho$, which  together with \eqref{hypo-energy} and \eqref{limit-e_M}, we obtain  the required estimate \eqref{lower_bound_rho_e}.

\vspace*{.1cm}
Further, in agreement with the Gibb's relation \eqref{equ-gibbs}, the molecular  component $s_M$ of the entropy $s$ satisfies
\begin{align}\label{Gibbs-1}
	\frac{\partial s_M}{\partial \theta} = \frac{1}{\theta} \frac{\partial e_M}{\partial \theta} \ \  \ \text{and} \ \ \  
	\frac{\partial s_M}{\partial \rho} = -\frac{1}{\rho^2} \frac{\partial p_M}{\partial \theta} .
\end{align}
We set 
\begin{align}\label{def-s-m}
	s_M(\rho, \theta) = S(Z), \ \ Z = \frac{\rho}{\theta^{3/2}}, \ \ S^\prime(Z) =  -\frac{3}{2}\frac{\frac{5}{3} P(Z) - ZP^\prime(Z)}{Z^2} < 0 
\end{align}
in the degenerate area $\rho > \overline Z \theta^{3/2}$. We also require that the  {\em Third law of thermodynamics} is satisfied,
\begin{align}\label{third-law}
	\lim_{Z\to \infty} S(Z) = 0  .
\end{align}
At this point, we refer the book  \cite[Chapter 2]{Feireisl-Novotny-book} by E. Feireisl and A. Novotn{\'{y}} 
for more details about the assumptions on the constitutive relations  \eqref{hypo-press} to \eqref{third-law}.

\section{Weak formulations}\label{section-prem-weak-for}

In this section, we shall prescribe the expected weak formulations for the system 
\eqref{continuity-eq}--\eqref{pois-eq}.
Throughout the section, we assume that the density $\rho$ remains ``zero" outside the   fluid domain $\Omega_t$ for each $t\in [0,T]$. Indeed, we will show that if the initial density $\rho_0=0$ outside $\Omega_0$, then $\rho$ will also vanish outside $\Omega_t$ for any $t\in (0,T]$ (see Section \ref{Section:Solid-part}).

\vspace*{.1cm}
\noindent 
{\bf I.} {\bf Continuity equation.}	 It is convenient to consider the continuity equation in the whole physical space $\mathbb R^3$ provided the density is supposed to be zero outside the fluid domain $\Omega_t$ for each $t\in [0,T]$. Specifically, the weak formulation of the continuity equation \eqref{continuity-eq} is supposed to be 
\begin{align}\label{weak-continuity}
	-	\int_0^T \int_{\Omega_t} \left(\rho \partial_t \vphi +    \rho \uvect \cdot \nabla_x \vphi \right)  =  \int_{\Omega_0} \rho_0(\cdot) \vphi(0, \cdot)  ,
\end{align}
for any  test function $\vphi \in \C^1( [0,T]\times \mathbb R^3; \mathbb R)$ with $\vphi(T,\cdot)=0$.  Of course, we assume that $\rho \geq 0$ a.e. in $\mathbb R^3$.

Moreover, the equation \eqref{continuity-eq}  will  also be satisfied in the sense of renormalized solutions introduced by DiPerna and Lions \cite{Lions-Diperna}: 
\begin{align}\label{weak-con-ren}
	-	\int_0^T \int_{\Omega_t} \rho B(\rho)\left( \partial_t \vphi +     \uvect \cdot \nabla_x \vphi \right) + 
	\int_0^T \int_{\Omega_t} b(\rho) \Div_x \uvect \vphi 
	=  \int_{\Omega_0} \rho_0 B(\rho_0)\vphi(0, \cdot),
\end{align}
for any   test function $\vphi \in \C^1( [0,T]\times \mathbb R^3; \mathbb R)$ with $\vphi(T,\cdot)=0$, $b \in L^\infty \cap \C([0,+\infty))$ such that $b(0)=0$ and $\displaystyle B(\rho) = B(1)+ \int_{1}^\rho \frac{b(z)}{z^2}$. 

\vspace*{.1cm}
\noindent
{\bf II.} {\bf Momentum equation.}   We  write the expected weak formulation for the momentum equation as 
\begin{align}\label{weak_form_momen}
	-\int_0^T \int_{\Omega_t} \left( \rho \uvect \cdot \partial_t  \boldvphi   +  \rho[\uvect \otimes \uvect] : \nabla_x  \boldvphi + p(\rho, \theta) \Div_x \boldvphi   \right)     - \int_{\Omega_0}   (\rho \uvect)_0\cdot  \boldvphi(0, \cdot) \notag   \\
	= \int_0^T \int_{\Omega_t} \rho \nabla_x \Psi \cdot \boldvphi - \int_0^T \int_{\Omega_t} \Ss : \nabla_x \boldvphi ,
\end{align}
for  any test function $\boldvphi\in \C^1(\overline Q_T; \mathbb R^3)$ satisfying 
\begin{align*}
	\boldvphi(T,\cdot)=0,  \ \ \text{in } \Omega_T, \text{ and } \ \  	(\boldvphi \cdot \mathbf n ) |_{\Gamma_t} = 0 , \quad \text{for  any $t \in [0,T]$.}
\end{align*}
The impermeability condition will be then satisfied in the sense of trace, 
\begin{align*}
	(\uvect  , \nabla_x \uvect)\in L^2(Q_T;  \mathbb R^3)   , \quad (	\uvect - \V)\cdot \mathbf n |_{\Gamma_t} = 0, \quad \text{for any } t\in [0,T].   
\end{align*}

\vspace*{.1cm}
\noindent 
{\bf III.} {\bf Entropy inequality.} Using the Gibbs' equation \eqref{equ-gibbs}, we deduce the entropy equation from \eqref{temp-eq}, given by 
\begin{align}\label{entropy_eq}
	\partial_t (\rho s) + \Div_x (\rho s \uvect ) + \Div_x \left(\frac{\qvect}{\theta} \right) = \frac{1}{\theta} \left( \Ss : \nabla_x \uvect - \frac{\qvect}{\theta} \cdot \nabla_x \theta    \right) .
\end{align}
Based on the fact that the $a$ $priori$ bounds only provide the $ L^1$ bound for the entropy production rate, the entropy equation is formulated by an inequality (see \cite{Feireisl-Novotny-book}).
\begin{align}\label{entropy_ineq}
	\partial_t (\rho s) + \Div_x (\rho s \uvect ) + \Div_x \left(\frac{\qvect}{\theta} \right) \geq \frac{1}{\theta} \left( \Ss : \nabla_x \uvect - \frac{\qvect}{\theta} \cdot \nabla_x \theta    \right). 
\end{align}
Then, the weak formulation  for \eqref{entropy_ineq} should be of the form:
\begin{multline}\label{weak-for-entropy}
	- \int_0^T \int_{\Omega_t} \left(\rho s\partial_t \vphi + \rho s\uvect \cdot \nabla_x \vphi + \frac{\qvect}{\theta} \cdot \nabla_x \vphi  \right)  - \int_{\Omega_0} (\rho s)_0 \vphi(0,\cdot)   \\
	\geq \int_0^T\int_{\Omega_t} \frac{\vphi}{\theta}  \left( \Ss : \nabla_x \uvect  - \frac{\qvect}{\theta} \cdot \nabla_x \theta    \right) , 
\end{multline} 
for any  test function $\vphi \in \C^1(\overline Q_T; \mathbb R)$ with $\vphi \geq 0$, $\vphi(T,\cdot)=0$  and $\vphi|_{\Omega_t}=0$ for all $t\in [0,T]$.

\vspace*{.1cm}
\noindent
{\bf IV.} {\bf Poisson equation.} The Poisson equation \eqref{pois-eq} will be  considered in the whole space  $\mathbb R^3$ provided $\rho=0$ outside the domain $\Omega_t$ for each $t\in [0,T]$.  Accordingly, the expected weak formulation for the Poisson equation will be
\begin{align}\label{weak-poission}
	\int_0^T \int_{\Omega_t} \nabla_x \Psi \cdot \nabla_x \vphi = \int_0^T \int_{\Omega_t} \rho \vphi 
\end{align}
for any test function $\vphi \in \C^1(\overline Q_T; \mathbb R)$.

\vspace*{.1cm}
\noindent 
{\bf V.} {\bf Ballistic energy inequality.}
We note that the weak formulation of energy equation  $\eqref{temp-eq}$ cannot be directly used to define the weak solution to system \eqref{continuity-eq}--\eqref{pois-eq}, due to the absence of information about heat flux on the boundary.  The method we adopted here is combining the weak formulation of entropy with the energy balance to get the ballistic energy.

Assuming all the quantities of concern are smooth and multiplying the momentum equation \eqref{momentum-eq} by $(\uvect -  \V)$, then integrating by parts w.r.t. space variable and integrating the energy equation \eqref{temp-eq}, one has by summing up (as well as using the continuity equation)
\begin{align}\label{energy-1}
	&	 \frac{\d}{\dt} \int_{\Omega_t}\left(\frac{1}{2} \rho |\uvect|^2  + \rho e   \right) + \int_{\Gamma_t} \qvect\cdot \mathbf n - \int_{\Omega_t} \rho \nabla_x \Psi \cdot (\uvect -\V)   \notag 
	\\
	= &
	-\int_{\Omega_t} \left(\rho[\uvect \otimes \uvect]    : \nabla_x \V - \Ss : \nabla_x \V + p\, \Div_x \V \right)
	+ \int_{\Omega_t} \partial_t(\rho \uvect) \cdot \V .
\end{align} 
We now observe that 
\begin{align}\label{esti-poisson}
	-\int_{\Omega_t}  \rho \nabla_x \Psi \cdot \uvect =  \int_{\Omega_t} \Psi \, \Div_x (\rho \uvect ) = -\int_{\Omega_t} \Psi \, \partial_t \rho = \frac{1}{4\pi g} \int_{\Omega_t} \Psi \partial_t(\Delta_x \Psi)   \notag \\
	= -\frac{1}{8\pi g} \frac{\d}{\dt}\int_{\Omega_t} |\nabla_x \Psi|^2  ,
\end{align}
where no boundary integral will appear after the first integration by parts since the density is supposed to be ``zero"  outside the fluid domain $\Omega_t$ for any  $t\in [0,T]$.  In the second integration by parts we simply use that $\disp \frac{\partial \Psi}{\partial {\mathbf n}}=0$ on $\Gamma_t$ for each $t\in [0,T]$.

Using \eqref{esti-poisson} in \eqref{energy-1}, we have
\begin{align}\label{energy-1-2}
	&	 \frac{\d}{\dt} \int_{\Omega_t}\left(\frac{1}{2} \rho |\uvect|^2  + \rho e  -\frac{1}{8\pi g} |\nabla_x \Psi|^2 \right) + \int_{\Gamma_t} \qvect\cdot \mathbf n 
	\notag \\
	= &
	-\int_{\Omega_t} \left(\rho[\uvect \otimes \uvect]    : \nabla_x \V - \Ss : \nabla_x \V + p\, \Div_x \V \right)
	+ \int_{\Omega_t} \partial_t(\rho \uvect) \cdot \V 
	- \int_{\Omega_t} \rho \nabla_x \Psi \cdot \V  .
\end{align}
The last term in the r.h.s. of the above equality can be computed as follows:   
\begin{align*}
	-	\int_{\Omega_t} \rho (\nabla_x \Psi \cdot \V) =\frac{1}{4\pi g} \int_{\Omega_t}  \Delta_x \Psi (\nabla_x \Psi \cdot \V) = -\frac{1}{4\pi g} \int_{\Omega_t}\nabla_x \Psi \cdot \nabla_x (\nabla_x \Psi \cdot \V) ,
\end{align*}
thanks to the homogeneous Neumann boundary condition of $\Psi$, and eventually one has,
\begin{align}\label{esti-poiison-V}
	-	\int_{\Omega_t} \rho (\nabla_x \Psi \cdot \V) 
	= - \frac{1}{8\pi g} \int_{\Omega_t} |\nabla_x \Psi|^2 \Div_x \V .
\end{align}
But from the Poisson equation \eqref{pois-eq} we  have 
\begin{align*}
	\int_{\Omega_t} |\nabla_x \Psi|^2 \leq C(g) \|\rho\|_{L^{\frac{6}{5}}(\Omega_t)} \|\Psi\|_{L^6(\Omega_t)} \leq \epsilon \|\Psi\|^2_{W^{1,2}(\Omega_t)} + 
	\frac{C(g)}{\epsilon}  \|\rho\|^2_{L^{\frac{6}{5}}(\Omega_t)},
\end{align*}
for some constant $C(g)>0$ and for any $\epsilon>0$, thanks to the fact that $W^{1,2}(\Omega_t)\hookrightarrow L^6(\Omega_t)$ (as we are in dimension $3$). Now, since $\disp \int_{\Omega_t} \Psi \dx =0$,  by using generalized Poincar\'e inequality (see Lemma \ref{Poincare}) we have 
\begin{align}\label{bound-poisson}
	\|\Psi\|_{W^{1,2}(\Omega_t)} \leq  C(g) \|\rho\|_{L^{\frac{6}{5}}(\Omega_t)} \leq C_g\|\rho\|^{\frac{7}{12}}_{L^1(\Omega_t)}\|\rho\|^{\frac{5}{12}}_{L^{\frac{5}{3}}(\Omega_t)} \leq C(\rho_0, g)
	\Big(\int_{\Omega_t} \rho^{\frac{5}{3}}\Big)^{\frac{1}{4}} 
	\notag  \\
	\leq \Big(\int_{\Omega_t}\rho^{\frac{5}{3}}\Big)^{\frac{1}{2}} + C(\rho_0, g),
\end{align}
where we also used the standard interpolation inequality.

\vspace*{.1cm}
This leads to the following two facts:
\begin{itemize}
	\item[(i)] We have  
	\begin{align}\label{bound-rho-psi-V}  
		\left|\int_{\Omega_t} \rho \nabla_x \Psi \cdot \V  \right| \leq C(\V)\int_{\Omega_t} \rho^{\frac{5}{3}} + C(g,\V,\rho_0), 
	\end{align}
	(recall that the fluid satisfies the mass conservation law, i.e., $\disp\int_{\Omega_t}\rho \dx = \int_{\Omega_0}\rho_0 \dx$.)
	
	\item[(ii)] The negative term $\disp -\frac{1}{8\pi g} \|\nabla_x \Psi\|^2_{L^2(\Omega_t)}$ can be absorbed by the term $\rho e(\rho, \theta)$ in the equation \eqref{energy-1-2}.  In fact, the estimate \eqref{bound-poisson} can be written in a more precise way, namely
	\begin{align*}
		\|\Psi\|^2_{W^{1,2}(\Omega_t)} \leq 
		6\pi g p_{\infty} \int_{\Omega_t} \rho^{\frac{5}{3}} + C(\rho_0, g, p_\infty) ,
	\end{align*}
	so that using the lower bound of $\rho e(\rho, \theta)$ from \eqref{lower_bound_rho_e}, we have 
	\begin{align}\label{bound-grad-psi}
		\frac{1}{8\pi g} \|\nabla_x \Psi\|^2_{L^2(\Omega_t)} \leq \frac{3p_\infty}{4}  \int_{\Omega_t} \rho^{\frac{5}{3}} + C(\rho_0, g, p_\infty) \leq \frac{1}{2} \rho e(\rho, \theta) + C(\rho_0,g, p_\infty) ,
	\end{align}
	and thus the required result follows. 
\end{itemize}

However, the relation \eqref{energy-1-2} cannot be used in the weak formulation as the boundary integral 
\begin{align*}
	\int_{\Gamma_t} \qvect \cdot \mathbf n 
\end{align*} 
cannot be controlled  in the formulation. To get rid of this,
we shall multiply the entropy inequality \eqref{entropy_ineq} by some $\widetilde \theta\in \C^1(\overline Q_T; \mathbb R)$ such that 
\begin{align}\label{def-theta-tilde}
	\inf_{\overline Q_T} \widetilde\theta>0 \ \text{ and } \ \widetilde \theta (t,x) = \theta_B(t,x)  \quad \text{for } (t,x) \in \cup_{t\in(0,T)}\big(\{t\} \times \Gamma_t\big) , 
\end{align}
and integrating by parts we have
\begin{align}\label{energy-2}
	-	\frac{\d}{\dt} \int_{\Omega_t}  \rho s \widetilde \theta 
	+ \int_{\Omega_t} \left[ \rho s \left(\partial_t \widetilde \theta +  \uvect \cdot \nabla_x \widetilde \theta\right) + \frac{\qvect}{\theta} \cdot \nabla_x \widetilde \theta \right] - \int_{\Gamma_t} \frac{\widetilde \theta}{\theta} \,\qvect \cdot \mathbf n 
	\notag \\
	\leq - \int_{\Omega_t} \frac{\widetilde \theta}{\theta} \left( \Ss : \nabla_x \uvect  - \frac{\qvect}{\theta} \cdot \nabla_x \theta    \right).
\end{align}
Let us first  add \eqref{energy-1-2} and \eqref{energy-2} and use   the bound \eqref{bound-grad-psi} to absorb the negative term $\disp - \frac{1}{8\pi g}\|\nabla_x \Psi\|^2_{L^2(\Omega_t)}$  in terms of $\rho e(\rho, \theta)$. Then, using  \eqref{boundary-theta-1}, \eqref{def-theta-tilde} and  replacing  $\qvect=-\kappa(\theta)\nabla_x \theta$,  we obtain
\begin{align}\label{energy-3} 
	& \frac{\d}{\dt} \int_{\Omega_t} \left(\frac{1}{2} \rho |\uvect |^2  + \rho e - \rho s \widetilde \theta \right) 
	+ \int_{\Omega_t} \frac{\widetilde \theta}{\theta} \left( \Ss : \nabla_x \uvect 
	+\frac{\kappa(\theta)}{\theta} |\nabla_x \theta|^2    \right)  
	\notag \\
	&	\leq
	-\int_{\Omega_t} \left(\rho[\uvect \otimes \uvect]    : \nabla_x \V - \Ss : \nabla_x \V + p\, \Div_x \V \right) + \int_{\Omega_t} \partial_t(\rho \uvect) \cdot \V 
	\notag \\
	& \ \ \   - \int_{\Omega_t} \rho \nabla_x \Psi \cdot \V - \int_{\Omega_t} \left[ \rho s \left(\partial_t \widetilde \theta +  \uvect \cdot \nabla_x \widetilde \theta\right) - \frac{\kappa(\theta)}{\theta}\nabla_x \theta  \cdot \nabla_x \widetilde \theta \right] ,
\end{align}
where we observe that \eqref{energy-3}  does not contain the boundary heat flux and therefore, it is suitable for the weak formulation.

Till here, we get the ballistic energy for the system \eqref{continuity-eq}--\eqref{pois-eq}, and rewrite it in an explicit way as below:
\begin{align}\label{energy-4} 
	& - \int_0^T \partial_t \psi \int_{\Omega_t} \left(\frac{1}{2} \rho |\uvect |^2  + \rho e - \rho s \widetilde \theta \right) 
	+ \int_0^T \psi \int_{\Omega_t} \frac{\widetilde \theta}{\theta} \left( \Ss : \nabla_x \uvect 
	+\frac{\kappa(\theta)}{\theta} |\nabla_x \theta|^2    \right)  
	\notag \\
	&	\leq \psi(0)  \int_{\Omega_0} \bigg( \frac{1}{2} \frac{|(\rho \uvect)_0|^2}{\rho_0} + \rho_0 e_0(\rho_0, \theta_0) - \rho_0 s(\rho_0, \theta_0) \widetilde \theta(0,\cdot) \bigg) \notag \\   
	& -\int_0^T \psi \int_{\Omega_t} \left(\rho[\uvect \otimes \uvect]    : \nabla_x \V - \Ss : \nabla_x \V + p\, \Div_x \V \right) + \int_0^T \psi \int_{\Omega_t} \partial_t(\rho \uvect) \cdot \V 
	\notag \\
	& \ \ \ -  C\int_0^T \psi \int_{\Omega_t} \rho \nabla_x \Psi \cdot \V 
	- \int_0^T \psi \int_{\Omega_t} \Big[ \rho s \left(\partial_t \widetilde \theta +  \uvect \cdot \nabla_x \widetilde \theta\right) - \frac{\kappa(\theta)}{\theta}\nabla_x \theta  \cdot \nabla_x \widetilde \theta \Big] ,
\end{align}
for any $\psi \in \C^1_c([0,T))$ with $\psi \geq 0$ and $\partial_t \psi \leq 0$.

\begin{defi}\label{def}
	We say that the trio $(\rho,\uvect,\theta )$ is a weak solution  of the problem 
	\eqref{continuity-eq}--\eqref{pois-eq} with boundary
	conditions \eqref{Navier-slip}--\eqref{boundary-Psi}
	and initial conditions \eqref{initial_conditions}--\eqref{initial-energy} if the following items hold:
	\begin{itemize}
		\item $\rho\in L^\infty(0,T;L^{\frac{5}{3}}(\mathbb{R}^3))$
		, $\rho\geq 0$, $\rho\in L^q(Q_T)$ with some certain $q>1$,
		
		\item $\uvect$, $\nabla_x\uvect$ $\in L^2(Q_T)$, $\rho \uvect \in L^\infty(0,T; L^1(\mathbb R^3))$,
		
		\item $\theta>0$ $a.e.$ on $Q_T$, $\theta\in L^\infty(0,T;L^4(\mathbb{R}^3))$,  $\theta, \nabla_x\theta, \log\theta, \nabla_x\log\theta \in L^2(Q_T)$,  and
		\item the relations \eqref{weak-continuity}, \eqref{weak-con-ren}, \eqref{weak_form_momen}, \eqref{weak-poission}, \eqref{energy-4}are satisfied.
	\end{itemize}
\end{defi}

\section{Main result}

Here we state the main theorem of this paper:
\begin{theorem}\label{Main-theorem}
	Assume that $\Omega_0 \subset \mathbb{R}^3$ is a bounded domain of the class $\C^{2+\nu_0}$ for some $\nu_0>0$ and, suppose that $\V\in \C^1([0,T]; \C^3_c(\mathbb{R}^3;\mathbb{R}^3))$ satisfying \eqref{trajec-eq} and the hypothesis in subsection \ref{Section-hypothesis} are satisfied. Then the Naiver-Stokes-Fourier-Poisson system \eqref{continuity-eq}--\eqref{pois-eq} with boundary
	conditions \eqref{Navier-slip}--\eqref{boundary-Psi}
	and initial conditions \eqref{initial_conditions}--\eqref{initial-energy} admits a weak solution in the sense of Definition \ref{def} on any finite time interval $(0,T)$.
\end{theorem}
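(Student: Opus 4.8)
The plan is to follow the by-now standard multi-level approximation scheme of Feireisl and Novotn\'y for the Navier--Stokes--Fourier system, adapted to moving domains via the penalization method of \cite{Sarka-Kreml-Neustupa-Feireisl-Stebel, sarka-aneta-al-JMPA}, and incorporating the ballistic-energy formalism of \cite{Chauduri-Feireisl} to handle the non-vanishing boundary heat flux. First I would fix a ball $B\subset\mathbb R^3$ with $\overline{\Omega_t}\subset B$ for all $t\in[0,T]$ and recast the whole system on the fixed domain $B$ (extending the data suitably), introducing three auxiliary parameters: a boundary/viscosity penalization parameter $\omega\to\infty$ weighting a term that forces $\uvect\to\V$ on $B\setminus\Omega_t$ together with a cut-off degenerating the viscous stress there, an artificial-pressure term $\delta(\rho^\beta+\rho^2)$ with $\beta$ large, and an artificial-viscosity coefficient $\veps>0$ adding $\veps\Delta_x\rho$ to the continuity equation and the compensating term $\veps\nabla_x\rho\cdot\nabla_x\uvect$ to the momentum balance. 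On this regularized problem the momentum equation is solved by a Faedo--Galerkin scheme (velocities in a finite-dimensional space with vanishing normal trace on $\partial B$), the now-parabolic continuity equation by maximal regularity, the internal-energy balance so that positivity $\theta>0$ is retained, and the Poisson equation by elliptic theory, the potential already obeying \eqref{bound-poisson}.

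The next step is to collect uniform a priori estimates. Mass is conserved, and the ballistic-energy inequality \eqref{energy-4} --- with the gravitational contributions controlled through \eqref{bound-rho-psi-V}--\eqref{bound-grad-psi} and the lower-order terms $\partial_t(\rho\uvect)\cdot\V$ and $\rho s(\partial_t\widetilde\theta+\uvect\cdot\nabla_x\widetilde\theta)$ absorbed by a Gr\"onwall argument --- yields $\rho|\uvect|^2,\ \rho e(\rho,\theta)\in L^\infty(0,T;L^1)$, hence $\theta\in L^\infty(0,T;L^4)$ and $\rho\in L^\infty(0,T;L^{5/3})$ by \eqref{lower_bound_rho_e}. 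The dissipation term on the left-hand side of \eqref{energy-4} controls $\sqrt{\widetilde\theta/\theta}\,(\Ss:\nabla_x\uvect)$ and, thanks to the strong growth hypothesis on $\kappa(\theta)$ in \eqref{hypo-kappa} (with exponent $\alpha>6$), also $\nabla_x\theta$ and $\nabla_x\log\theta$ in $L^2(Q_T)$ --- via Sobolev embedding applied to $\theta^{\alpha/2}$ and $\log\theta$, with the Dirichlet trace $\theta_B$ fixing the mean. The entropy inequality \eqref{weak-for-entropy} is then recovered as a by-product.

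With these bounds one passes to the limit successively in $n\to\infty$ (Galerkin), $\veps\to0$, $\delta\to0$, and finally $\omega\to\infty$. The $\veps\to0$ and $\delta\to0$ steps are where density compactness must be established: I would use the effective-viscous-flux identity together with the Lions--Feireisl commutator technique, the Div--Curl lemma, and boundedness of the oscillation defect measure to upgrade weak convergence of $\rho$ to a.e.\ convergence and to propagate the renormalized continuity equation \eqref{weak-con-ren}. Strong convergence of $\theta$ follows from the uniform $\nabla_x\theta$ bound combined with an Aubin--Lions-type argument applied to $\rho s$ and $\rho e$ through the entropy/internal-energy balances; monotonicity of $e_M$ and $s_M$ in $\theta$ then identifies the nonlinear limits. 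The gravitational coupling passes to the limit with ease: $\nabla_x\Psi$ is compact in space by elliptic regularity for \eqref{pois-eq} and the $L^{5/3}$ bound on $\rho$, and equicontinuous in time through the continuity equation, so $\rho\nabla_x\Psi$ converges. Letting $\omega\to\infty$ then forces $\uvect=\V$ a.e.\ on $B\setminus\Omega_t$ and, by the trace estimates on $\Gamma_t$, the impermeability condition $(\uvect-\V)\cdot\mathbf n=0$; since the density stays zero outside $\Omega_t$ (propagated from the hypothesis on $\rho_0$), all integrals over $B$ collapse to integrals over $\Omega_t$ and Definition \ref{def} is met.

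The main obstacle I anticipate is the density compactness in the $\delta\to0$ limit in the presence of the temperature-dependent pressure $p_M(\rho,\theta)$, the radiative part $\tfrac a3\theta^4$, and the temperature-dependent viscosity $\mu(\theta)$: one must verify that the Feireisl commutator estimates and the effective-viscous-flux identity remain valid in this setting, exploiting the structural hypotheses \eqref{hypo-p_m_with_P}--\eqref{hypo-p_m_with_P-2} on the degenerate regimes. Compounding this, everything must be carried out uniformly on the fixed domain $B$, where outside $\Omega_t$ the density is only \emph{small} (not exactly zero) and the viscous stress degenerates --- precisely the difficulty resolved in \cite{Sarka-Kreml-Neustupa-Feireisl-Stebel, sarka-aneta-al-JMPA}, now further entangled with the temperature coupling and with the requirement that the ballistic-energy bookkeeping survive each limit passage. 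The non-homogeneous boundary heat flux, by contrast, is not an additional obstacle once \eqref{energy-4} is adopted, since it never appears explicitly; the only care needed is the choice of $\widetilde\theta$ as in \eqref{def-theta-tilde} and the control of the extra terms it introduces.
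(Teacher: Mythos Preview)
Your plan has the right skeleton but three concrete structural issues would make it fail as stated.

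First, the penalization you describe --- ``a term that forces $\uvect\to\V$ on $B\setminus\Omega_t$'' --- is a Brinkman-type volume penalization and in the limit yields the no-slip condition $\uvect=\V$ on $\Gamma_t$, not the full-slip condition \eqref{Navier-slip}--\eqref{imperm} the theorem asks for. The paper instead penalizes only the normal trace via a \emph{surface} term $\frac{1}{\veps}\int_{\Gamma_t}(\uvect-\V)\cdot\mathbf n\,\boldvphi\cdot\mathbf n$ in the weak momentum balance, so that as $\veps\to 0$ one recovers exactly $(\uvect-\V)\cdot\mathbf n|_{\Gamma_t}=0$ and nothing more.

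Second, your order of limits (Galerkin, $\veps\to0$, $\delta\to0$, then domain penalization) breaks the argument that $\rho$ vanishes in the solid part. That step relies on $\rho\in L^2((0,T)\times B)$, which in the paper is supplied by the artificial pressure $\delta\rho^\beta$ through \eqref{uniform-bound-2}; once $\delta=0$ you are left only with $\rho\in L^\infty_tL^{5/3}_x$, which is not enough for the DiPerna--Lions-type argument (Lemma~4.1 in \cite{Sarka-Kreml-Neustupa-Feireisl-Stebel}). The paper therefore keeps $\delta>0$ through the domain-penalization limits and lets $\delta\to0$ \emph{last}.

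Third --- and this is the most serious omission --- degenerating the viscous stress outside $\Omega_t$ destroys any dissipative control of $\uvect$ and $\theta$ there, yet the energy and entropy balances still live on all of $B$. The paper handles this by introducing, beyond $\omega$, three further parameters: cut-offs $\chi_\nu$ on $\kappa$ and $\chi_\xi$ on the Stefan--Boltzmann constant $a$, together with an artificial absorption term $\lambda\theta^{\alpha+1}$ in the energy balance (and $\lambda\theta^\alpha$ in the entropy balance), precisely to retain temperature estimates on the solid part. These parameters cannot be sent to zero independently; a careful scaling $\lambda=\nu^{1/3}=\omega^{1/3}=\xi^{1/6}$ is needed so that all the ``solid'' integrals (such as \eqref{limit-1}--\eqref{limit-6}) vanish while the fluid-domain estimates stay uniform. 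Your single penalization parameter with a single viscosity cut-off misses this mechanism entirely, and without it the ballistic-energy bookkeeping does not close on $B$.
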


\section{Penalized problem} \label{sec-penalized}

Let us choose $R>0$ such that 
\begin{align*}
	\V|_{[0,T]\times \{ |x|>R \}} = 0 , \quad \overline \Omega_0 \subset \{ x \in \mathbb R^3 : |x| \leq  R \},
\end{align*}
and then take the reference domain $$\B: = \{x\in \mathbb R^3 : |x| < 2R\}  .  $$

\subsection{Mollification of the coefficients and initial data}\label{Section-hypo-molified}

$\bullet$ The viscosity coefficients are taken as 
\begin{equation}\label{def-mu-omega}
	\mu_\omega (\theta) = f_\omega \mu(\theta) \in \C^\infty_c([0,T]\times \B) , 
\end{equation}
and
\begin{equation}\label{def-nu-omega}
	\eta_\omega (\theta) =  f_\omega\eta(\theta) \in \C^\infty_c([0,T]\times \B) ,
\end{equation}
where the function $f_\omega \in \C^\infty_c([0,T]\times \B)$ such that 
\begin{align}\label{def-f-omega}
	\begin{dcases}
		0< \omega  \leq f_\omega \leq 1 \ \ \ \text{in } \, [0,T] \times \B ,\text{ for } \omega >0,	\\
		f_\omega(t,\cdot) |_{\Omega_t} = 1 \ \text{ for any } t \in [0,T] , \ \ \text{and}\\
		\|f_\omega \|_{L^{p}( ((0,T) \times \B)\setminus Q_T)} \leq c \, \omega \ \, \text{for some } p\geq \frac{\alpha+1}{\alpha-1}  .
	\end{dcases}
\end{align}
From the above definitions, it is  obvious that   
\begin{align}
	\mu_\omega , \  \eta_\omega \to 0 \ \  \text{ a.e. in } \ \ ((0,T)\times \B )\setminus Q_T \ \ \text{ as } \ \omega \to 0.
\end{align}

\vspace*{.1cm}
\noindent 
$\bullet$ We also set the heat conductivity coefficient as follows:
\begin{align}\label{defi-kappa-nu}
	\kappa_\nu (\theta, t, x) = \chi_\nu \kappa(\theta) , 
\end{align}
where $\chi_\nu \in L^\infty((0,T)\times \B)$ such that
\begin{align}\label{def-chi-nu}
	\chi_\nu = 1 \ \ \text{ in } \, Q_T  \ \text{ and } \ \chi_\nu = \nu  \
	\text{ in } \, ((0,T)\times \B) \setminus Q_T \ \ \text{for } \nu >0.
\end{align}

\vspace*{.1cm}
\noindent 
$\bullet$
Similarly, we introduce a variable coefficient $a_\xi:= a_\xi(t,x)$ representing the radiative parts of the pressure, internal energy and entropy, given by 
\begin{align}\label{a-xi}
	a_\xi ( t, x) = \chi_\xi  a,
\end{align} 
with $\chi_\xi\in L^\infty((0,T)\times \B)$ such that 
\begin{align}\label{def-chi-xi}
	\chi_\xi = 1 \ \ \text{ in } \, Q_T  \ \text{ and } \ \chi_\xi = \xi  \
	\text{ in } \, ((0,T)\times \B) \setminus Q_T \ \ \text{for } \xi >0.
\end{align}
We now set 
\begin{align}\label{pressure-pena}
	p_{\xi, \delta} (\rho, \theta) = p_M(\rho, \theta) + \frac{a_\xi}{3} \theta^4  + \delta \rho^\beta , \quad \beta \geq 4, \ \delta>0 ,\\
	\label{energy-entropy-pena}
	e_\xi(\rho, \theta) = e_M(\rho,\theta) + \frac{a_\xi \theta^4}{\rho} , \quad s_\xi(\rho, \theta) = s_M(\rho, \theta) + \frac{4a_\xi \theta^3}{3\rho} . 
\end{align}

\vspace*{.1cm}
\noindent 
$\bullet$
Let us now define the modified initial data $\rho_{0,\delta}$, $(\rho \uvect)_{0,\delta}$ and $\theta_{0,\delta}$.  We consider $\rho_{0,\delta}$ such that
\begin{equation}\label{pena-rho-initial}
	\begin{aligned}
		\rho_{0,\delta} \geq 0, \ \ \rho_{0,\delta} \not\equiv 0 \  \text{ in } \Omega_0, \ \ \rho_{0,\delta} =0 \ \text{ in }  \mathbb R^3\setminus \Omega_0, \ \ \int_{\B}\left(\rho^{\frac{5}{3}}_{0,\delta} + \delta \rho^{\beta}_{0,\delta} \right) \leq c , \\
		\text{and } \ \rho_{0,\delta} \to \rho_0 \ \text{ in } L^{\frac{5}{3}}(\B) \ \text{ as } \delta \to 0 , \ \ \ |\{ \rho_{0,\delta} <\rho_0 \}| \to  0 \ \text{ as } \delta \to 0.
	\end{aligned}
\end{equation}
In above, the constant $c>0$ is independent of the parameter $\delta$.

Next, the initial data for the momentum part is taken in such a way 
\begin{align}
	(\rho \uvect)_{0,\delta} = \begin{dcases}
		(\rho \uvect)_0  \ &\text{if } \rho_{0,\delta} \geq \rho_0 , \\
		0                \       & \text{else}.
	\end{dcases}
\end{align} 

For the temperature part, we consider $0<\underline \theta \leq \theta_{0,\delta} \leq \overline \theta$ with $\theta_{0,\delta} \in L^\infty(\B) \cap \C^{2+\nu_0}(\B)$ for some exponent  $\nu_0\in (0,1)$ where $\underline \theta, \overline \theta$ are positive real numbers as introduced  in Section \ref{section-problm-state}.

Moreover, $\rho_{0,\delta}$ and $\theta_{0,\delta}$ are taken in such a way that 
\begin{align}
	\int_{\Omega_0} \rho_{0,\delta} e(\rho_{0,\delta}, \theta_{0,\delta}) \to  	\int_{\Omega_0} \rho_{0} e(\rho_{0}, \theta_{0}), \ \ \text{and}
\end{align}
\begin{align}\label{initial-entropy-pen}
	\rho_{0,\delta} s(\rho_{0,\delta}, \theta_{0,\delta}) \to   \rho_{0} s(\rho_{0}, \theta_{0}) \ \text{ weakly in } \ L^1(\Omega_0) . 
\end{align}

\subsection{Penalization in the fixed domain and weak formulations}\label{Section-penalized-wk}

We begin this subsection by   shortly describing the strategy of the proof for \Cref{Main-theorem}.
\begin{enumerate}
	\item
	In  the momentum equation, we add the penalized term 
	\begin{align}\label{pena-boundary-mom} 
		\frac{1}{\veps} \int_0^T \int_{\Gamma_t}  (\uvect - \V ) \cdot \mathbf n \ \vphi \cdot \mathbf n \, \ \text{for } \veps > 0 \ \mbox{small},
	\end{align} 
	which was originally proposed by Stokes and Carey in \cite{StoCar}.
	In principle, this allows to deal with the slip boundary conditions. Indeed, as $\veps \to 0$, this additional term yields the boundary condition $(\uvect - \V)\cdot \mathbf n = 0$ on $\Gamma_t$, after reaching some uniform estimates w.r.t. $\veps$.  Accordingly, the reference domain $(0,T) \times \B$ is separated by an impermeable interface $\cup_{t\in (0,T)} \{t\} \times \Gamma_t$ to a {\em fluid domain} $Q_T$ and a {\em solid domain} $((0,T)\times \B) \setminus Q_T$. 
	
	As a matter of fact,  we need to take care the behaviour of the solution in the solid domain. To do so,  we consider the variable coefficients $\mu_\omega, \eta_\omega, \kappa_\nu, a_\xi$  as presented in Section \ref{Section-hypo-molified}. Moreover, similar to the existence theory developed in \cite{Feireisl-Novotny-book}, we  introduce the artificial pressure $p_{\xi,\delta}$ with an extra term $\delta \rho^\beta$ (see \eqref{pressure-pena}), which gives  some  more  (regularity) information about the density.

	\item We  add a term $\lambda \theta^{\alpha+1}$ into the energy balance and $\lambda \theta^{\alpha}$ into the entropy balance, where $\lambda >0$ and $\alpha$ is appearing in  \eqref{hypo-kappa} in the hypothesis of heat conductivity coefficient $\kappa$. These terms yield a control over the temperature in the solid domain. More  precisely, these extra penalized terms help to get rid of some unusual terms in  solid domain while passing to the limit as $\xi, \nu\to 0$.

	\item Keeping $\veps, \omega, \nu, \lambda, \xi$ and $\delta>0$ fixed, 
	we use the existence theory for the compressible
	N-S-F   system with nonhomogeneous  boundary data in the fixed reference domain, developed in \cite{Chauduri-Feireisl} (the part of Poisson equation with the N-S-F can be easily handled in the fixed domain). 
	
	\item  Taking the initial density
	$\rho_0$ vanishing outside $\Omega_0$ and letting $\veps \to 0$
	for fixed $\omega, \nu, \lambda, \xi, \delta > 0$ we obtain a ``two-fluid''
	system where the density vanishes in the solid part $\left((0,T)
	\times \B \right) \setminus Q_T$. 
	Then, in order to get rid of the terms in $\left((0,T)
	\times \B \right) \setminus Q_T$, we 
	tend all other parameters to zero.  To this end, it is required to introduce a proper scaling to let the parameters 
	$\omega, \nu, \xi, \lambda$ to zero simultaneously. This has been rigorously prescribed in Section \ref{Section-Scaling}. Finally,  we let $\delta\to 0$ in a standard fashion, as already used in other related works.
	
\end{enumerate}

Now we are ready to state the weak formulation for the penalized problem.   We consider that the extended $\uvect$  vanishes on the boundary  $(0,T)\times \partial \B$, that is 
\begin{align}
	\uvect|_{\partial \B} = 0, \quad \text{ for all  } t\in (0,T) .
\end{align}

\vspace*{.1cm}

\noindent
{\bf I.} {\bf Continuity equation.}  The weak formulation for the continuity equation reads as 
\begin{align}\label{weak-conti}
	-\int_0^T \int_{\B} \rho B(\rho)\left( \partial_t \vphi +    \uvect \cdot \nabla_x \vphi \right) 
	+	\int_0^T \int_{\B} b(\rho) \Div_x \uvect \vphi 
	=  \int_{\B} \rho_{0,\delta} B(\rho_{0,\delta})\vphi(0, \cdot),
\end{align}
for any  test function $\vphi \in \C^1_c( [0,T)\times \B; \mathbb R)$  and any $b \in L^\infty \cap \C([0,+\infty))$ such that $b(0)=0$ and $\displaystyle B(\rho) = B(1)+ \int_{1}^\rho \frac{b(z)}{z^2}$.

\vspace*{.1cm}
\noindent 
{\bf II.} {\bf Momentum equation.}
The momentum equation is represented by the family of integral identities 
\begin{align}\label{weak-momen}
	-	\int_0^T \int_{\B} \left( \rho \uvect \cdot \partial_t  \boldvphi   +  \rho[\uvect \otimes \uvect] : \nabla_x  \boldvphi + p_{\xi, \delta}(\rho, \theta) \Div_x \boldvphi   \right)   
	+\int_0^T \int_{\B} \Ss_\omega : \nabla_x \boldvphi 
	\notag  \\
	- \int_0^T \int_{\B} \rho \nabla_x \Psi \cdot \boldvphi +  \frac{1}{\veps} \int_0^T \int_{\Gamma_t}  (\uvect -\V) \cdot \mathbf n \ \boldvphi \cdot \mathbf n 
	=
	\int_{\B}   (\rho \uvect)_{0,\delta}\cdot  \boldvphi(0, \cdot) ,
\end{align}
for  any  test function $\boldvphi\in \C^1_c([0,T) \times \B; \mathbb R^3)$  and 
\begin{align}\label{stress_tensor-omega}
	\Ss_\omega(\theta, \nabla_x \uvect) = \mu_\omega(\theta, t,x) \left( \nabla_x \uvect + \nabla^\top_x \uvect -\frac{2}{3} \Div_x \uvect \mathbb I  \right) +\eta_\omega(\theta,t,x) \Div_x \uvect \mathbb I,
\end{align}

\vspace*{.1cm}
\noindent 
{\bf III.} {\bf Poisson equation.} The  weak formulation for the Poisson equation is given by
\begin{align}\label{pena-weak-poisson}
	\int_0^T \int_{\B} \nabla_x \Psi \cdot \nabla_x \vphi = \int_0^T \int_{\B} \rho \vphi , 
\end{align}
for any test function $\vphi \in \C^1((0,T)\times \B; \mathbb R)$, under the assumption that $\rho=0$ outside $\B$.

\vspace*{.1cm}
\noindent 
{\bf IV.} {\bf Entropy inequality.}
Next, we write the penalized  entropy inequality, given by 
\begin{align}\label{penalized-entropy}
	-\int_0^T \int_{\B} \left(\rho s_\xi(\rho,\theta) \left(\partial_t \vphi + \uvect \cdot \nabla_x \vphi\right) - \frac{\kappa_\nu(\theta, t,x)}{\theta} \nabla_x \theta \cdot \nabla_x \vphi  \right) 
	\notag \\
	- \int_{\B} \rho_{0,\delta} s(\rho_{0,\delta}, \theta_{0,\delta}) \vphi(0,\cdot)   
	+ \int_0^T \int_\B \lambda \theta^{\alpha} \vphi 
	\notag \\
	\geq \int_0^T \int_{\B} \frac{\vphi}{\theta}  \left( \Ss_\omega : \nabla_x \uvect +
	\frac{\kappa_\nu(\theta, t,x)}{\theta} |\nabla_x \theta|^2   \right) , 
\end{align}
for any test function $\vphi \in \C^1_c([0,T) \times \B; \mathbb R)$ with $\vphi \geq 0$.

\vspace*{.1cm}
\noindent
{\bf V.} {\bf Ballistic energy inequality.} 
We now write the ballistic energy to the penalized problem as follows:
\begin{align}\label{energy-pena-1-2}
	& -\int_0^T \partial_t \psi \int_{\B} \bigg(\frac{1}{2} \rho |\uvect |^2  + \rho e_\xi(\rho, \theta) - \rho s_\xi(\rho, \theta)  \widetilde \theta +  \frac{\delta}{\beta-1} \rho^\beta \bigg) + \int_0^T \psi \int_\B \lambda \theta^{\alpha+1} 
	\notag \\ 
	& \ + \int_0^T \psi \int_{\B} \frac{\widetilde \theta}{\theta} \bigg( \Ss_\omega : \nabla_x \uvect 
	+\frac{\kappa_\nu(\theta,t,x)}{\theta} |\nabla_x \theta|^2    \bigg) + \frac{1}{\veps} \int_0^T \psi \int_{\Gamma_t} |(\uvect - \V)\cdot \mathbf n|^2   
	\notag \\
	&	\leq  \psi(0) \int_{\B} \bigg(\frac{1}{2}  \frac{|(\rho\uvect)_{0,\delta}|^2}{\rho_{0,\delta}}  
	+ \rho_{0,\delta}e_\xi(\rho_{0,\delta}, \theta_{0,\delta}) - \rho_{0,\delta} s_\xi(\rho_{0,\delta}, \theta_{0,\delta})\widetilde \theta(0,\cdot)  + \frac{\delta}{\beta-1} \rho^\beta_{0,\delta}    \bigg)  
	\notag \\
	& \ + \int_0^T \psi \int_{\B} \lambda \theta^{\alpha} \widetilde \theta +  \int_0^T \psi \int_{\B} \partial_t(\rho \uvect) \cdot \V -\int_0^T \psi \int_{\B} \rho \nabla_x \Psi \cdot \V 
	\notag \\
	& \	- \int_0^T \psi \int_{\B} \Big(\rho[\uvect \otimes \uvect]    : \nabla_x \V - \Ss_\omega : \nabla_x \V + p_{\xi, \delta}(\rho, \theta) \Div_x \V \Big) 
	\notag \\
	&   \  - \int_0^T \psi \int_{\B} \Big[ \rho s_{\xi} \left(\partial_t  \widetilde \theta +  \uvect \cdot \nabla_x  \widetilde \theta \right) - \frac{\kappa_\nu(\theta,t,x)}{\theta}\nabla_x \theta  \cdot \nabla_x \widetilde \theta \Big] ,
\end{align}
for any $\psi \in \C^1_c([0,T))$ with $\psi \geq 0$, $\partial_t \psi \leq 0$.

In above, the function $\widetilde \theta\in  \C^1([0,T]\times \overline \B);\mathbb R)$ satisfies 
\begin{align}\label{choice-tilde-theta-B}
	\inf_{\overline{(0,T)\times \B}} \widetilde \theta >0 ,
\end{align} 
and it can be chosen in the following way:
first we consider $\widetilde \theta$ 
as the  solution to
\begin{align}\label{test-function-theta_B}
	-\Delta_x \widetilde \theta (t, \cdot) = 0 \ \ \text{in } \Omega_t, \quad \widetilde \theta(t, \cdot)|_{\Gamma_t} = \theta_B(t, \cdot), \quad  \text{for each } \, t\in [0,T] ,
\end{align}
where $\theta_B$ is the nonhomogeneous boundary data for the temperature as prescribed in \eqref{boundary-theta-1}.  
Then, we smoothly extend this $\widetilde \theta$  throughout the reference domain $\overline{\B}$  for each $t\in [0,T]$ and  
this particular $\widetilde \theta$ is used  in the ballistic energy inequality \eqref{energy-pena-1-2}.

\vspace*{.1cm}

Moreover, to have such formulation \eqref{energy-pena-1-2} of the energy balance,  we also need to consider that the temperature $\theta$ in the reference domain $(0,T)\times \B$ must satisfy the boundary condition 
\begin{align}\label{boundary-new-theta-B}
	\theta |_{\partial \B} = \widetilde \theta |_{\partial \B} , \quad \text{for all } t \in [0,T]. 
\end{align}

\begin{defi}\label{weak-solution-penalization}
	We say that the trio $(\rho,\uvect,\theta )$ is a weak solution to the penalized problem with initial data \eqref{pena-rho-initial}--\eqref{initial-entropy-pen} if the following items hold:
	\begin{itemize}
		\item $\rho\in L^\infty(0,T;L^{\frac{5}{3}}(\mathbb{R}^3))\cap L^\infty(0,T;L^{\beta}(\mathbb{R}^3))$, $\rho\geq 0$, $\rho\in L^q((0,T)\times \B)$ with some certain $q>1$,
		
		\item $\uvect$, $\nabla_x\uvect$ $\in L^2((0,T)\times \B)$,
		$\rho \uvect \in L^\infty(0,T; L^1(\B))$,
		
		\item $\theta>0$ $a.e.$ on $Q_T$, $\theta\in L^\infty(0,T;L^4(\B))$, $\theta, \nabla_x\theta, \log\theta, \nabla_x\log\theta \in L^2((0,T)\times \B)$, and  
		
		\item the relations \eqref{weak-conti}, \eqref{weak-momen}, \eqref{pena-weak-poisson}, \eqref{penalized-entropy},
		\eqref{energy-pena-1-2} are satisfied.
	\end{itemize}
\end{defi}

\begin{theorem}\label{weak-solution-fixed-domain}
	Assume that $\V\in \C^1([0,T]; \C_c^3(\mathbb{R}^3;\mathbb{R}^3 ))$ and, suppose that  the hypotheses in subsections \ref{Section-hypothesis}, \ref{Section-hypo-molified},  and the equations of states are satisfied. Moreover, the initial data satisfy \eqref{pena-rho-initial}--\eqref{initial-entropy-pen}.
	Then there exists a weak solution to the penalized problem on any time interval $(0, T)$ in the sense of Definition \ref{weak-solution-penalization}.  
\end{theorem}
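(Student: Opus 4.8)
The plan is to keep all the penalization parameters $\veps,\omega,\nu,\lambda,\xi,\delta>0$ fixed and to reproduce, with the necessary modifications, the multi-layer approximation scheme of Chaudhuri and Feireisl \cite{Chauduri-Feireisl} for the compressible Navier--Stokes--Fourier system with non-homogeneous Dirichlet temperature data (itself based on \cite{Feireisl-Novotny-book, Feireisl-NSF-1}). Concretely, I would: (i) fix a Faedo--Galerkin basis for the velocity on the fixed domain $\B$ with the homogeneous Dirichlet condition $\uvect|_{\partial\B}=0$; (ii) regularize the continuity equation by an artificial viscosity $\varepsilon_0\Delta_x\rho$ (a parameter distinct from $\veps$) with the homogeneous Neumann condition on $\partial\B$, which keeps the renormalized continuity equation valid; (iii) couple the momentum balance with the Stokes--Carey boundary term $\tfrac1\veps\int_{\Gamma_t}(\uvect-\V)\cdot\mathbf n\,\boldvphi\cdot\mathbf n$ and with the self-gravitational force $\rho\nabla_x\Psi$, where $\Psi=\Psi[\rho]$ is obtained by solving the linear Poisson problem \eqref{pena-weak-poisson} on $\B$ (density extended by zero); (iv) close the system by the regularized internal-energy/entropy equations carrying the penalizing terms $\lambda\theta^{\alpha+1}$ and $\lambda\theta^{\alpha}$ and the boundary condition $\theta|_{\partial\B}=\widetilde\theta|_{\partial\B}$, with $\widetilde\theta$ the harmonic extension \eqref{test-function-theta_B} of $\theta_B$. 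Since $\Psi[\rho]$ is given by elliptic theory, the bound $\|\Psi[\rho]\|_{W^{1,2}(\B)}\le C(\rho_{0,\delta},g)\,(1+\|\rho\|_{L^{5/3}(\B)})^{1/2}$ of \eqref{bound-poisson} holds at every level of the scheme and depends continuously on $\rho$.

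Next I would establish the a priori estimates uniformly in the internal approximation parameters. Mass conservation comes from the (renormalized) continuity equation. In the penalized ballistic energy inequality \eqref{energy-pena-1-2} the negative Poisson contribution $-\tfrac1{8\pi g}\|\nabla_x\Psi\|_{L^2}^2$ is absorbed into $\rho e_\xi$ via \eqref{bound-grad-psi}, and the forcing $\int_{\B}\rho\nabla_x\Psi\cdot\V$ is dominated using \eqref{bound-rho-psi-V}; the outcome is the control of $\sqrt\rho\,\uvect$ in $L^\infty_tL^2$, of $\rho$ in $L^\infty_tL^\beta$ (via $\delta\rho^\beta$, using $\beta\ge4$), of $\theta$ in $L^\infty_tL^4\cap L^{\alpha+1}$, together with $\nabla_x\uvect\in L^2$ and $\nabla_x\theta,\nabla_x\log\theta\in L^2$ coming from the dissipation term $\tfrac{\widetilde\theta}{\theta}\big(\Ss_\omega:\nabla_x\uvect+\tfrac{\kappa_\nu}{\theta}|\nabla_x\theta|^2\big)$; here one uses that, for \emph{fixed} $\omega,\nu$, the coefficients $\mu_\omega,\eta_\omega,\kappa_\nu$ are bounded below by strictly positive constants, so the quadratic forms are coercive. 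The Stokes--Carey term adds the non-negative quantity $\tfrac1\veps\int_{\Gamma_t}|(\uvect-\V)\cdot\mathbf n|^2$ and is therefore harmless for the estimates; its boundary trace is well defined since $\uvect\in L^2_tH^1(\B)$ and $\Gamma_t$ is smooth. The entropy inequality \eqref{penalized-entropy} supplies the $L^1$ bound on the entropy production, as usual.

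With these bounds in hand I would pass to the limit layer by layer in the standard order (Galerkin dimension $\to\infty$, then $\varepsilon_0\to0$), keeping $\veps,\omega,\nu,\lambda,\xi,\delta$ fixed. The steps genuinely new with respect to \cite{Chauduri-Feireisl} are: the self-gravitation force $\rho\nabla_x\Psi[\rho]$ converges because weak $L^{5/3}$ convergence of $\rho$ forces, by elliptic regularity for the Poisson problem, strong $W^{1,2}_{loc}$ convergence of $\Psi[\rho]$, while $\rho$ itself converges strongly in $L^p(\B)$ for $p<\beta$ by the Feireisl compactness argument, so the product passes; the boundary term is linear in $\uvect$ and passes to the limit thanks to the compactness of the trace map $L^2_tH^1(\B)\to L^2((0,T)\times\Gamma_t)$ for the smooth moving boundary; the non-homogeneous temperature data are treated precisely by the ballistic-energy bookkeeping with $\widetilde\theta$ exactly as in \cite{Chauduri-Feireisl, Danica}, the $\lambda$-terms only improving the temperature integrability. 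The limiting pressure is identified through the effective viscous flux identity and the propagation of the renormalized continuity equation (bounded oscillation defect measure) of \cite{Feireisl-Novotny-book, Feireisl-NSF-1}; the spatially inhomogeneous viscosity $\mu_\omega(\theta,t,x)\in\C_c^\infty$ enters the Riesz-operator commutator exactly as the temperature-dependent viscosity does in \cite{Chauduri-Feireisl}, so the argument carries over, and $\beta\ge4>3$ guarantees the required density integrability.

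The step I expect to be the main obstacle is this last one: establishing the strong convergence of the density — hence identifying the limit pressure — while simultaneously carrying the spatially inhomogeneous viscosity $\mu_\omega$, the boundary-penalized momentum equation and the self-gravitational forcing. Once the effective viscous flux identity is verified in this setting and the renormalized continuity equation is shown to propagate, the remaining ingredients (the Poisson coupling, the Stokes--Carey penalization, the $\lambda$-penalized entropy/energy balances, and the ballistic-energy formulation) are lower-order or compact perturbations of the scheme of \cite{Chauduri-Feireisl} and pose no essential difficulty.
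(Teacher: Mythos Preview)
Your proposal is correct and follows essentially the same route as the paper: both treat the theorem by adapting the multi-layer approximation scheme of Chaudhuri--Feireisl \cite{Chauduri-Feireisl} (Galerkin for the velocity, artificial viscosity in the continuity equation, regularized internal energy equation, ballistic-energy bookkeeping with $\widetilde\theta$), augmented by the Stokes--Carey boundary penalization and the linear Poisson coupling, with all parameters $\veps,\omega,\nu,\lambda,\xi,\delta$ frozen.

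One point worth flagging: you correctly note that $\mu_\omega\in\C^\infty_c$ is smooth and therefore causes no trouble in the effective-viscous-flux commutator, but the paper singles out a different technical issue as the genuine novelty at this level, namely the \emph{discontinuities} of $\kappa_\nu$ and $a_\xi$ across the moving interface $\Gamma_t$ (they jump from $1$ to $\nu$, resp.\ $\xi$, and are merely $L^\infty$). These jumps have to be smoothed out at the approximation stage and recovered only after the internal limits, as in \cite[proof of Theorem~3.1]{Sarka-et-al-ZAMP}; your write-up implicitly absorbs this into ``lower-order perturbations'', which is fine in spirit but is precisely the step the paper chooses to highlight. Otherwise your identification of the strong-density-convergence step as the crux, and your handling of the self-gravitation term via the elliptic estimate \eqref{bound-poisson}, match the paper's intent.
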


\begin{proof}
	Here we just give a short explanation on the proof. The existence of weak solution with the non-homogeneous Dirichlet condition for the temperature in the fixed domain is similar to \cite{Chauduri-Feireisl}. It is necessary to  consider the continuity equation with a viscous term $\Delta_x\rho$, solving the momentum equations via Faedo-Galerkin approximations and the Poisson equation. Instead of pursuing the solution to the entropy equation, we look for the solution to the internal energy equation. Note that \eqref{energy-pena-1-2} is adopted by dealing with the particular non-homogeneous boundary conditions.
	As pointed out in \cite[Theorem 3.1]{Sarka-et-al-ZAMP}, here also we face the following difficulties. 
	\begin{itemize}
		\item The penalized terms  $\disp \frac{1}{\veps} \int_0^T \int_{\Gamma_t}  (\uvect -\V) \cdot \mathbf n \ \boldvphi \cdot \mathbf n $ in \eqref{weak-momen} and $\disp \frac{1}{\veps} \int_0^T \psi \int_{\Gamma_t} |(\uvect - \V)\cdot \mathbf n|^2 $ in   \eqref{energy-pena-1-2}. 
		
		\item The jumps in functions $\disp \kappa_\nu (\theta, t, x)$ in \eqref{defi-kappa-nu}, and $\disp a_\xi( t, x)$ in \eqref{a-xi}.

	\end{itemize}
	
	The strategy to overcome these difficulties  has already been discussed in the beginning of the proof of Theorem 3.1 in \cite{Sarka-et-al-ZAMP}. In the present work, we employ 
	similar methodology for the proof.  We  emphasize that the term $\lambda\theta^{\alpha+1}$ ($\alpha>6$)  is necessary for our modified internal energy equation to provide uniform bounds of high power of the temperature on $\B$. Moreover, additional difficulty will arise to get a proper  bound of the term $\rho s_\xi(\rho, \theta) \uvect$ appearing in right hand side of the energy inequality \eqref{energy-pena-1-2}.
	
	A detailed study of the existence of such  approximated solutions can be
	found in \cite{KMNPW-L}.
\end{proof}

The rest of the paper is devoted to prove the main result of this paper, that is, \Cref{Main-theorem}.

\subsection{Uniform bounds}

This  subsection is devoted to  establish the uniform bounds for the weak solution which is constructed by virtue of Theorem \eqref{weak-solution-fixed-domain}.

Let us define the following Helmholtz-type function (see \cite[Chapter 2.2.3]{Feireisl-Novotny-book}):
\begin{align}\label{H-B}
	\mathcal	H_{\widetilde \theta, \xi}(\rho, \theta) : =
	\rho e_\xi(\rho, \theta) - \rho s_\xi(\rho, \theta) \widetilde \theta ,
\end{align}
where $\widetilde \theta \in \C^1([0,T]\times \overline \B ; \mathbb R)$ has been introduced in the previous section (see 
\eqref{choice-tilde-theta-B}--\eqref{boundary-new-theta-B}). 

Accordingly, we denote 
\begin{align}\label{H-B-0}
	\mathcal	H_{\widetilde \theta, \xi}(\rho_{0,\delta}, \theta_{0,\delta}) = \rho_{0,\delta} e_\xi(\rho_{0,\delta}, \theta_{0,\delta}) - \rho_{0,\delta} s_\xi(\rho_{0,\delta}, \theta_{0,\delta}) \widetilde \theta(0,\cdot) . 
\end{align}
We now consider $\psi_\zeta\in \C^1_c([0,T))$ with
\begin{align*}
	\psi_\zeta(t) = 
	\begin{dcases} 1 \quad &\text{for } t < \tau - \zeta , \\
		0   \quad & \text{for } t \geq \tau ,     
	\end{dcases}  \ \ \text{ for any given } \tau \in (0,T) , \ 0< \zeta < \tau   ,
\end{align*}
and using it as a test function in \eqref{energy-pena-1-2}, we derive after passing to the limit $\zeta \to 0$, that
\begin{align}\label{energy-pena-3}
	&	\int_{\B} \left(\frac{1}{2} \rho |\uvect |^2  + \mathcal H_{\widetilde \theta, \xi}(\rho, \theta)  
	+ \frac{\delta}{\beta-1} \rho^\beta\right)(\tau, \cdot)  	+ \frac{1}{\veps} \int_0^\tau \int_{\Gamma_t} |(\uvect -\V) \cdot \mathbf n |^2
	\notag \\
	&
	+ \int_0^\tau \int_\B \lambda \theta^{\alpha+1}  
	+ \int_0^\tau \int_{\B} \frac{\widetilde \theta}{\theta} \left( \Ss_\omega : \nabla_x \uvect + \frac{\kappa_\nu(\theta,t,x)}{\theta} |\nabla_x \theta|^2     \right) 
	\notag \\	
	\leq  &
	\int_{\B} \bigg(\frac{1}{2}  \frac{|(\rho\uvect)_{0,\delta}|^2}{\rho_{0,\delta}}  	+ \mathcal 	H_{\widetilde \theta, \xi}(\rho_{0,\delta}, \theta_{0,\delta})  + \frac{\delta}{\beta-1} \rho^\beta_{0,\delta}  \bigg) + \int_0^\tau \int_\B \lambda \theta^{\alpha} \, \widetilde \theta 
	\notag \\
	&  + \int_\B (\rho \uvect \cdot \V)(\tau, \cdot)  - \int_\B (\rho \uvect)_{0,\delta} \V(0,\cdot) - \int_0^\tau \int_\B\rho \nabla_x \Psi \cdot \V
	\notag \\
	& 	-\int_0^\tau \int_{\B} \Big(\rho[\uvect \otimes \uvect]    : \nabla_x \V - \Ss_\omega : \nabla_x \V + p_{\xi,\delta}(\rho,\theta)\, \Div_x \V   + \rho \uvect \cdot \partial_t \V    \Big) 
	\notag \\
	&    -	\int_0^\tau \int_{\B} \Big[\rho s_{\xi}(\rho,\theta) \Big(\partial_t \widetilde \theta  + \uvect \cdot \nabla_x \widetilde \theta  \Big) - \frac{\kappa_\nu(\theta, t, x)}{\theta} \nabla_x \theta \cdot \nabla_x \widetilde{\theta}   \Big]  ,
\end{align}
for almost all  $\tau \in (0,T)$.

\vspace*{.1cm}
Let us now find the uniform bounds of the right hand side of the modified energy inequality \eqref{energy-pena-3}. 
First, we recall that the fluid system  satisfies the mass conservation law, that is 
\begin{align*}
	\int_{\B} \rho(\tau, \cdot ) = \int_\B \rho_{0,\delta} (\cdot) = \int_{\Omega_0} \rho_0(\cdot) = C(\rho_0) >0.
\end{align*}
Keeping that in mind, we proceed to find the estimates. 

\vspace*{.1cm}
\noindent 
{$\bullet$ \bf Step 1.}  
(i)  For any $\epsilon>0$ small, we have 
\begin{align}\label{esti-1}
	\int_\B  (\rho \uvect \cdot \V)(\tau,\cdot)  \leq C(\V) 	\left| \int_\B  \sqrt{\rho} \sqrt{\rho} \uvect(\tau,\cdot) \right| \leq C(\V, \rho_0)  + \epsilon \int_\B \rho |\uvect|^2 .
\end{align} 

\vspace*{.1cm}
\noindent 
(ii) 
Next, recall how we obtain \eqref{bound-rho-psi-V}, and we reach to the following:
\begin{align}\label{esti-nabla-Psi-V}
	\left|\int_0^\tau \int_\B \rho \nabla_x \Psi \cdot \V \right| &\leq C(\V)\int_0^\tau \int_\B \rho^{\frac{5}{3}} + C(\V,\rho_0, g) \notag \\
	&\leq C(\V, p_\infty, g)\bigg( \int_0^\tau \int_\B \rho e_\xi(\rho, \theta) + 1 \bigg).
\end{align}

\vspace*{.1cm}
\noindent 
(iii) 
Without loss of generality we assume $0<\lambda\leq 1$ from now onwards.  Then, by using H\"older's and Cauchy-Schwarz inequality, we obtain 
\begin{align}\label{esti-2} 
	\int_0^\tau \int_\B \Ss_\omega : \nabla_x \V 
	&\leq   \frac{1}{2} \int_0^\tau \int_\B  \frac{\widetilde{\theta}}{\theta} \Ss_\omega : \nabla_x \uvect + C(\V, \widetilde{\theta}) \int_0^\tau \int_\B \theta \notag  \\
	& \leq \frac{1}{2} \int_0^\tau \int_\B  \frac{\widetilde{\theta}}{\theta} \Ss_\omega : \nabla_x \uvect + 
	\epsilon \int_0^\tau \int_\B \lambda \theta^{\alpha+1} +  \frac{C(\V, \widetilde{\theta}, \epsilon)}{\lambda^{1/{\alpha}}} .
\end{align}
We also have that 
\begin{align}\label{esti-3}
	\left| \int_0^\tau \int_\B\rho[\uvect \otimes \uvect]  : \nabla_x \V  \right| \leq  C(\V)\int_0^\tau \int_\B \rho |\uvect|^2  , \ \ \text{and}
\end{align}
\begin{align}\label{esti-4}
	\left| \int_0^\tau \int_\B\rho \uvect \cdot \partial_t \V  \right| \leq  C(\V, \rho_0) + C \int_0^\tau \int_\B \rho |\uvect|^2  .
\end{align}

\vspace*{.1cm}
\noindent 
(iv) 
Next, since $0<\lambda\leq 1$,  it is easy to observe that
\begin{align}\label{esti-5}
	\int_0^\tau \int_\B \lambda \theta^{\alpha} \, \widetilde{\theta} \leq \frac{C (\widetilde{\theta})}{\epsilon} + \epsilon \int_0^\tau \int_\B \lambda \theta^{\alpha+1} .
\end{align}

\vspace*{.1cm}
\noindent 
(v) 
The pressure term  $p_{\xi, \delta}(\rho, \theta)$ in \eqref{pressure-pena} can be estimated as follows. First, we recall the point \eqref{fact-1} which indeed tells that $P^\prime(Z)>0$ for all $Z>0$.
Further, we  recall the fact \eqref{fact-2-1} which gives
$\disp \lim_{Z\to \infty} \frac{P(Z)}{Z^{\frac{5}{3}}} = p_\infty > 0$.
Therefore, we obtain the following bounds on the molecular pressure $p_M$, 
\begin{equation}\label{mole_bound_p_M}
	\begin{aligned} 
		\underline c \rho \theta \leq & p_M \leq \overline c \rho \theta      \quad  \text{if } \rho < \overline Z \theta^{\frac{3}{2}}, \\
		\underline c \rho^{\frac{5}{3}} \leq & p_M \leq 
		\begin{cases}
			\overline c \theta^{\frac{5}{2}}  \quad  \text{if } \rho < \overline Z \theta^{\frac{3}{2}} , \\
			\overline c	\rho^{\frac{5}{3}} \quad  \text{if } \rho > \overline Z \theta^{\frac{3}{2}} ,
		\end{cases}
	\end{aligned}
\end{equation}
and $p_M$ is monotone in $\underline Z \theta^{\frac{3}{2}} \leq \rho \leq \overline Z \theta^{\frac{3}{2}}$. 

With the above information, we deduce that 
\begin{align}\label{estimate-pressue-term}
	\left|\int_0^\tau \int_\B p_{\xi, \delta} (\rho, \theta) \Div_x \V \right| 
	\leq C(\V) \int_0^\tau \int_\B \frac{\delta}{\beta-1} \rho^\beta +  C(\V) \int_0^\tau \int_\B a_{\xi}  \theta^4 \notag \\ 
	+ C(\V) \int_0^\tau \int_\B \rho^{\frac{5}{3}} 
	+ \epsilon \int_0^\tau \int_\B \lambda \theta^{\alpha+1} + \frac{C(\V, \epsilon)}{\lambda^{5/(2\alpha-3)}} .
\end{align}
In fact, we have 
that 
\begin{align}\label{lower_bound_rho_e_xi}
	\rho e_\xi \geq a_\xi \theta^4 + \frac{3}{2} p_\infty \rho^{\frac{5}{3}}	,
\end{align}
which can be shown in the same way as we have obtained \eqref{lower_bound_rho_e}, and 
therefore, 
\begin{align*}
	\int_0^\tau \int_\B \left(a_\xi \theta^4 + \rho^{\frac{5}{3}} \right)  \leq C(p_\infty) \int_0^\tau \int_\B \rho e_\xi .
\end{align*}

Using the above inequality in \eqref{estimate-pressue-term} and  together with all other estimates above, we have from \eqref{energy-pena-3}  (by fixing $\epsilon>0$ small enough),
\begin{align}\label{energy-pena-4}
	&	\int_{\B} \left(\frac{1}{2} \rho |\uvect |^2 + 	\mathcal H_{\widetilde{\theta}, \xi}(\rho, \theta) + \frac{\delta}{\beta-1} \rho^\beta\right)(\tau, \cdot) + \frac{1}{\veps} \int_0^\tau \int_{\Gamma_t} |(\uvect -\V) \cdot \mathbf n |^2   \notag  \\
	&	
	\ \ 	+ \int_0^T \int_\B \lambda \theta^{\alpha+1}	  
	+ \int_0^\tau \int_{\B} \frac{\widetilde \theta}{\theta} \left( \Ss_\omega : \nabla_x \uvect  + \frac{\kappa_\nu(\theta,t,x)}{\theta} |\nabla_x \theta|^2    \right) 
	\notag \\
	\leq  &
	\int_{\B} \bigg(\frac{1}{2}  \frac{|(\rho\uvect)_{0,\delta}|^2}{\rho_{0,\delta}} + \mathcal  	H_{\widetilde{\theta}, \xi}(\rho_{0,\delta}, \theta_{0,\delta})  + \frac{\delta}{\beta-1} \rho^\beta_{0,\delta}  - (\rho \uvect)_{0,\delta} \V(0,\cdot) \bigg) 
	\notag \\
	&	+ C \int_0^\tau \int_\B \bigg(\frac{1}{2}\rho |\uvect|^2   + \rho e_\xi (\rho, \theta) 
	+  \frac{\delta}{\beta-1} \rho^\beta \bigg) 
	\notag \\
	& + \bigg|	\int_0^\tau \int_{\B} \Big[\rho s_{\xi}(\rho,\theta) \left(\partial_t \widetilde{\theta}  + \uvect \cdot \nabla_x \widetilde{\theta}  \right) - \frac{\kappa_\nu(\theta, t, x)}{\theta} \nabla_x \theta \cdot \nabla_x \widetilde{\theta}   \Big]  \bigg|  \notag \\
	&  + C \Big(1+\frac{1}{\lambda^{5/(2\alpha-3)}  } \Big) ,
\end{align}
for almost all $\tau \in (0,T)$, where $\mathcal H_{\widetilde{\theta}, \xi}(\rho, \theta)$ and $\mathcal H_{\widetilde{\theta}, \xi}(\rho_{0,\delta}, \theta_{0,\delta})$ are defined by \eqref{H-B} and \eqref{H-B-0} respectively and $C>0$ is some constant that may depend on the quantities  $\V$, $\rho_0$, $p_\infty$, $g$ and $\widetilde{\theta}$ but not on the parameters $\lambda$, $\omega$, $\xi$, $\nu$, $\veps$ or $\delta$.

\vspace*{.2cm}
\noindent 
{$\bullet$ \bf Step 2.}   (i)
To the next, we recall the expression of $\Ss_\omega$ from \eqref{stress_tensor-omega} and using \eqref{def-mu-omega}, \eqref{def-nu-omega} and \eqref{hypo-mu},
we obtain 
\begin{align}\label{lower-bound-S-w-u}
	\int_0^\tau \int_\B \frac{\widetilde{\theta}}{\theta} \Ss_\omega : \nabla_x \uvect \geq c_1 (\omega) \inf_{\overline{(0,T)\times \B}} |\widetilde{\theta}| 
	\int_0^\tau \int_\B \Big|\nabla_x \uvect + \nabla^t_x \uvect -\frac{2}{3}\Div_x \uvect \mathbb I   \Big|^2,
\end{align}
for some constant $c_1(\omega)>0$. 

On the other hand, by the Korn-Poincar\'e inequality (see Lemma \ref{Korn-Poincare}) we have 
\begin{align*}
	\|\uvect\|^2_{W^{1,2}(\B; \mathbb R^3)} &\leq C \Big\|\nabla_x \uvect + \nabla^t_x \uvect -\frac{2}{3}\Div_x \uvect \mathbb I   \Big\|^2_{L^2(\B; \mathbb R^3)} + C \Big(\int_\B \rho |\uvect| \Big)^2 \\
	& \leq  C \Big\|\nabla_x \uvect + \nabla^t_x \uvect -\frac{2}{3}\Div_x \uvect \mathbb I   \Big\|^2_{L^2(\B; \mathbb R^3)} + C(\rho_0)\int_\B \rho |\uvect|^2 .
\end{align*} 
Therefore, 
\begin{align}\label{u_W_12}
	c_1(\omega) c(\widetilde\theta) \int_0^\tau	\|\uvect\|^2_{W^{1,2}(\B; \mathbb R^3) } \leq   \int_0^\tau \int_\B \frac{\widetilde{\theta}}{\theta} \Ss_\omega : \nabla_x \uvect + C(\rho_0, \widetilde{\theta}) \int_0^\tau \int_\B \rho |\uvect|^2,
\end{align}
for some constant $c(\widetilde \theta), C(\rho_0, \widetilde\theta)>0$. 

\begin{remark}\label{remark-c_1-c_2-omega}
	From the definition of $\mu_\omega$ and $\eta_\omega$ 
	it is clear that the constant $c_1(\omega)$ 
	behaves like $``c  \omega"$ in $\B\setminus \Omega_t$ for some constant $c>0$ which is independent in $\omega$.
\end{remark}

\noindent 
(ii) We still need to estimate of the last couple of terms containing $\widetilde{\theta}$ in the right hand side of \eqref{energy-pena-4}. Recall the definition of $\kappa_\nu$ from \eqref{defi-kappa-nu}, 
we get 
\begin{equation}\label{esti-aux-new}
	\begin{aligned}
		\bigg|\int_0^\tau \int_\B\frac{\kappa_\nu(\theta)}{\theta} \nabla_x \theta \cdot \nabla_x \widetilde{\theta}\bigg| 
		\leq 
		\frac{1}{2}\int_0^\tau \int_{\B} \widetilde{\theta} \frac{\kappa_\nu(\theta)}{\theta^2}|\nabla_x \theta|^2 + C(\widetilde{\theta}) \int_0^\tau \int_\B \kappa_\nu(\theta)  ,
	\end{aligned}
\end{equation} 
where the first integral can be absorbed by the associated leading term in the l.h.s. of \eqref{energy-pena-4}. 

Now, using the hypothesis on $\kappa_\nu(\theta)$, the second integral in \eqref{esti-aux-new}  can be estimated as follows:  
\begin{align}\label{bound-kappa-theta-new}
	C(\widetilde{\theta}) \int_0^\tau \int_\B \kappa_\nu(\theta) &\leq  C(\widetilde{\theta}) + C(\widetilde{\theta}) c(\nu)  \int_0^\tau \int_\B \theta^\alpha \notag \\
	& \leq C(\widetilde{\theta}) + \epsilon \int_0^\tau \int_\B \lambda \theta^{\alpha+1} + C(\widetilde{\theta}, \epsilon) \frac{(c(\nu))^{\alpha+1}}{\lambda^\alpha}, 
\end{align} 
for any $\eps>0$ small enough, where $c(\nu)\simeq c \nu$ on $\B\setminus \Omega_t$ and $c(\nu)\simeq c$ for some constant $c>0$ independent in $\nu$ (this can be seen from the choice of $\kappa_\nu(\theta)$).

\vspace*{.1cm}
\noindent (iii)
We now need to find a proper bound of $\disp \rho s_{\xi}(\rho, \theta) |\uvect|$.
Recall \eqref{third-law}, there exists some $c>0$ such that 
\begin{align}\label{Fact-1}  
	s_M(\rho,\theta)	= S\left(\frac{\rho}{\theta^{\frac{3}{2}}}\right)  \leq c  \quad \text{when } \frac{\rho}{\theta^{3/2}} > 1 ,
\end{align}
and therefore, 
\begin{align}\label{Fact-2} 
	\rho s_\xi (\rho, \theta) = \rho s_M(\rho, \theta) + \frac{4a_\xi}{3}\theta^3 \leq c\rho + \frac{4a_\xi}{3}\theta^3, \quad \text{ for } \frac{\rho}{\theta^{3/2}} >1 .
\end{align}

On the other hand, when $\disp \frac{\rho}{\theta^{3/2}} \leq 1$, we use the strategy developed in \cite[Section 4, formula (4.6)]{Feireisl2012weak} and  according to that, one has  (using the Gibb's relation \eqref{equ-gibbs}, the hypothesis \eqref{hypo-press}--\eqref{hypo-p_m_with_P-2} and \eqref{Gibbs-1})
\begin{align*}
	s_M(\rho,\theta) \leq 	C (1+ |\log \rho| + [\log \theta]^+) .
\end{align*}
This yields
\begin{equation} 
	\begin{aligned}\label{Fact-3-1}
		\rho s_{\xi}(\rho, \theta) = \rho s_M(\rho, \theta) + \frac{4a_\xi}{3}\theta^3
		\leq 	C	\left(\rho + |\rho \log \rho|  + |\rho| [\log \theta]^+ \right) + \frac{4a_\xi}{3}\theta^3  .  
	\end{aligned}
\end{equation}
Now, observe that 
\begin{align}\label{bound-rho-log-rho}
	|\rho \log \rho| \leq 
	\begin{dcases}
		C \rho^{\frac{1}{2}}, \ \ \ \text{when } 0< \rho \leq 1, \\
		\frac{3}{2} \rho [\log \theta]^+, \ \ \ \text{when } \rho >1 \ \ (\text{consequently $\theta>1$ since $\frac{\rho}{\theta^{3/2}} \leq 1$}),
	\end{dcases}
\end{align} 
where we have used the fact that $|\rho^{\frac{1}{2}} \log \rho|$ is bounded for $0<\rho \leq 1$.

Using \eqref{bound-rho-log-rho} in \eqref{Fact-3-1}, we get
\begin{align}\label{Fact-3}
	\rho s_\xi(\rho, \theta) \leq C \Big(  \rho + \rho^{\frac{1}{2}} + \rho [\log \theta]^+ \Big) + \frac{4a_\xi}{3}\theta^3
	, \quad  \text{ for } \frac{\rho}{\theta^{3/2}} \leq 1 .
\end{align}
Thus, we get  
\begin{align}\label{fact-rho-s}
	&\int_0^\tau 	\int_\B \rho s_{\xi}(\rho, \theta) |\uvect| \notag \\
	& \leq  C \int_0^\tau \int_\B \rho |\uvect| + C \int_0^\tau \int_\B \rho^{\frac{1}{2}} |\uvect| +  C \int_0^\tau \int_\B \rho |\uvect|[\log \theta]^+  + C \int_0^\tau \int_\B a_\xi \theta^3 |\uvect|   \notag \\
	&\leq  C(\rho_0)  + C\int_0^\tau \int_\B \rho |\uvect|^2 +  C \int_0^\tau \int_\B \rho ([\log \theta]^+)^2 +  \frac{c_1(\omega) c(\widetilde\theta) }{2} \int_0^\tau \int_\B |\uvect|^2  \notag \\
	& \ \ \ +  \frac{1}{2c_1(\omega)c(\widetilde\theta)}  \int_0^\tau \int_\B  a_\xi^2 \theta^6  \notag \\
	&\leq  C(\rho_0)  + C(\rho_0, \widetilde \theta)\int_0^\tau \int_\B \rho |\uvect|^2 +  C \int_0^\tau \int_\B \rho ([\log \theta]^+)^2 +  \frac{1}{2} \int_0^\tau \int_\B \frac{\widetilde \theta}{\theta} \Ss_\omega : \nabla_x \uvect   \notag \\
	& \ \ \ +  \frac{1}{2c_1(\omega)c(\widetilde\theta)}  \int_0^\tau \int_\B  a_\xi^2 \theta^6,
\end{align}
where we have used \eqref{Fact-3} and \eqref{u_W_12}, and the constants   $c_1(\omega)$, $c(\widetilde \theta)$ are appearing in \eqref{u_W_12}.

Here,  we observe that the third term in the last inclusion is arising due to the case when $\disp\frac{\rho}{\theta^{3/2}}\leq 1$. Keeping in mind this point, we have  
\begin{align}\label{fact-rho-s-1}
	\int_0^\tau \int_\B \rho ([\log \theta]^+)^2 
	\leq 
	\int_0^\tau \int_\B \theta^{\frac{3}{2}} ([\log \theta]^+)^2 
	&\leq  	
	\int_0^\tau \int_\B \theta^{\frac{5}{2}} \notag \\ 
	&	\leq \epsilon \int_0^\tau \int_\B \lambda \theta^{\alpha+1}  + \frac{C(\epsilon)}{\lambda^{5/(2\alpha-3)}} 
\end{align} 
for any chosen $\epsilon>0$  (since $[\log \theta]^+\leq \theta^{1/2}$).

Moreover, we find (since $\alpha>6$)
\begin{equation} 
	\begin{aligned}\label{fact-rho-s-3} 
		\frac{1}{2c_1(\omega)c(\widetilde \theta)}  \int_0^\tau \int_\B a_\xi^2 \theta^6 
		\leq
		\epsilon \int_0^\tau  \int_\B \lambda \theta^{\alpha+1} + C(\epsilon, \widetilde \theta)\int_0^\tau \int_\B \bigg(\frac{a^2_\xi}{c_1(\omega)}  \lambda^{-\frac{6}{\alpha+1}} \bigg)^{\frac{\alpha+1}{\alpha-5}} 
	\end{aligned}
\end{equation}
for any given $\epsilon>0$.

Therefore, by using    \eqref{fact-rho-s-1} and \eqref{fact-rho-s-3} in \eqref{fact-rho-s},  we obtain 
\begin{align}\label{Fact-4}
	&\int_0^\tau 	\int_\B \rho s_{\xi}(\rho, \theta) |\uvect| \notag \\
	& \leq 
	C(\rho_0) + C(\rho_), \widetilde \theta)\int_0^\tau \int_\B \rho |\uvect|^2 + 
	\frac{1}{2} \int_0^\tau \int_\B \frac{\widetilde \theta}{\theta} \Ss_\omega : \nabla_x \uvect 
	+  	
	C\epsilon \int_0^\tau \int_\B \lambda \theta^{\alpha+1} \notag \\ 
	& \  \  +   \frac{C(\epsilon)}{\lambda^{5/(2\alpha-3)}} 
	+ C(\epsilon, \widetilde \theta)
	\int_0^\tau \int_\B \bigg(\frac{a^2_\xi}{c_1(\omega)  \lambda^{\frac{6}{\alpha+1}}} \bigg)^{\frac{\alpha+1}{\alpha-5}} .
\end{align}

\vspace*{.1cm}
\noindent 
(iv) We finally observe that 
\begin{align}\label{norm-kappa-theta-1}
	\int_\B \widetilde{\theta} \, \kappa_\nu (\theta) \frac{|\nabla_x \theta|^2}{|\theta|^2} 
	\geq \underline{\kappa} c_2(\nu) \inf_{\overline{(0,T)\times \B}} |\widetilde{\theta}|  \int_\B   \big(\theta^{-2} + \theta^{\alpha-2} \big) |\nabla_x \theta|^2 ,
\end{align}  
in the l.h.s. of \eqref{energy-pena-4},	for some  constant $c_2(\nu)>0$ which behaves like  $``c\nu"$ in $\B\setminus \Omega_t$ for some constant $c>0$ that is independent in $\nu$ and in $\Omega_t$, $c_2(\nu)$ does not depend on $\nu$ since $\chi_\nu=1$ in $\Omega_t$ for any $t\in [0,T]$.

\vspace*{.2cm}
\noindent 
$\bullet$ Collecting the bounds \eqref{esti-aux-new}--\eqref{bound-kappa-theta-new} and \eqref{norm-kappa-theta-1}, the bound \eqref{Fact-4} along with \eqref{lower-bound-S-w-u}--\eqref{u_W_12},
we have from \eqref{energy-pena-4} (by fixing  $\epsilon>0$  small enough) that
\begin{align}\label{energy-pena-4-2}
	&	\int_{\B} \left(\frac{1}{2} \rho |\uvect |^2 +  \mathcal H_{\widetilde{\theta}, \xi}(\rho, \theta) + \frac{\delta}{\beta-1} \rho^\beta\right)(\tau, \cdot)   
	+ \frac{1}{\veps} \int_0^\tau \int_{\Gamma_t} |(\uvect -\V) \cdot \mathbf n |^2 \notag 
	\\
	&+ \int_0^T \int_\B \lambda \theta^{\alpha+1}  
	+ \int_0^\tau \int_\B \frac{\widetilde \theta}{\theta} \Ss_\omega : \nabla_x \uvect  + c_2(\nu) \int_0^\tau \int_\B \big( \theta^{-2} + \theta^{\alpha-2} \big)|\nabla_x \theta|^2
	\notag \\
	&\leq   
	\int_{\B} \bigg(\frac{1}{2}  \frac{|(\rho\uvect)_{0,\delta}|^2}{\rho_{0,\delta}}  + \mathcal H_{\widetilde{\theta}, \xi}(\rho_{0,\delta}, \theta_{0,\delta})  + \frac{\delta}{\beta-1} \rho^\beta_{0,\delta}  - (\rho \uvect)_{0,\delta} \V(0,\cdot) \bigg) 
	\notag \\
	& \	\ + C \int_0^\tau \int_\B \bigg( \frac{1}{2}\rho |\uvect|^2    + \rho e_\xi (\rho, \theta) 
	+  \frac{\delta}{\beta-1} \rho^\beta\bigg) \notag \\
	& \ \ + C \bigg(1+\frac{1}{\lambda^{5/(2\alpha-3)}} + \frac{\nu^{\alpha+1}}{\lambda^\alpha}+\bigg(\frac{\xi^{2}}{\omega \lambda^{6/(\alpha+1)} } \bigg)^{\frac{\alpha+1}{\alpha-5}}  \bigg) , 
\end{align}
for almost all  $\tau \in (0,T)$, and the constant $C>0$  may depend on the quantities  $\V$, $\rho_0$, $p_\infty$, $g$ and $\widetilde{\theta}$ but not on the parameters $\lambda$, $\omega$, $\xi$, $\nu$, $\veps$ or $\delta$.

Applying the
Gr\"onwall's inequality in \eqref{energy-pena-4-2}, we  deduce that
\begin{align}\label{energy-pena-5}
	&	\int_{\B} \left(\frac{1}{2} \rho |\uvect |^2 +  \mathcal H_{ \widetilde{\theta}, \xi}(\rho, \theta) + \frac{\delta}{\beta-1} \rho^\beta\right)(\tau, \cdot)   
	+ \frac{1}{\veps} \int_0^\tau \int_{\Gamma_t} |(\uvect -\V) \cdot \mathbf n |^2   	\notag \\  
	&\ 
	+	\int_0^\tau \int_\B \lambda \theta^{\alpha+1} 
	+  \int_0^\tau \int_\B \frac{\widetilde \theta}{\theta} \Ss_\omega : \nabla_x \uvect + c_2(\nu) \int_0^\tau \int_\B \big( \theta^{-2} + \theta^{\alpha-2} \big)|\nabla_x \theta|^2
	\notag \\
	&\leq 
	C	\int_{\B} \bigg(\frac{1}{2}  \frac{|(\rho\uvect)_{0,\delta}|^2}{\rho_{0,\delta}}  + \mathcal H_{\widetilde{\theta}, \xi}(\rho_{0,\delta}, \theta_{0,\delta})  + \frac{\delta}{\beta-1} \rho^\beta_{0,\delta}  - (\rho \uvect)_{0,\delta} \V(0,\cdot) \bigg) \notag \\
	& \ \ +C \bigg(1+\frac{1}{\lambda^{5/9}} + \frac{\nu^{\alpha+1}}{\lambda^\alpha} + \bigg(\frac{\xi^{2}}{\omega \lambda^{6/(\alpha+1)} } \bigg)^{\frac{\alpha+1}{\alpha-5}}  \bigg) ,
\end{align}
for almost all $\tau \in (0,T)$ and $C>0$ constant which has been specified in  \eqref{energy-pena-4-2}.  In above, we have used the following facts:
since $0<\lambda \leq 1$ and $\alpha>6$, one has
$$ \frac{1}{\lambda^{5/(2\alpha-3)}} < \frac{1}{\lambda^{5/9}} \quad \text{as } \ \frac{5}{(2\alpha-3)} < \frac{5}{9} .$$

To ensure that the left hand side of \eqref{energy-pena-5} is positive,  we proceed as follows. Setting a constant $\overline \rho$ such that $\disp \int_\B (\rho-\overline \rho)=0$ for almost all $\tau \in [0,T)$ and we rewrite the estimate \eqref{energy-pena-5} as the total dissipation inequality:
\begin{align}\label{energy-pena-6}
	&	\int_{\B} \left(\frac{1}{2} \rho |\uvect |^2 +	\mathcal H_{\widetilde{\theta}, \xi}(\rho, \theta) - 
	(\rho-\overline \rho) \frac{\partial\mathcal  H_{\widetilde{\theta} , \xi}(\overline \rho, \widetilde{\theta}) }{\partial \rho} - \mathcal H_{\widetilde{\theta}  , \xi}(\overline \rho, \widetilde{\theta}) + \frac{\delta}{\beta-1} \rho^\beta\right)(\tau, \cdot)  \notag  \\
	& \ 	+ \frac{1}{\veps} \int_0^\tau \int_{\Gamma_t} |(\uvect -\V) \cdot \mathbf n |^2 
	+	\int_0^\tau \int_\B \lambda \theta^{\alpha+1}	  
	\notag \\
	& 
	\ +  \int_0^\tau \int_\B \frac{\widetilde \theta}{\theta} \Ss_\omega : \nabla_x \uvect+c_2(\nu) \int_0^\tau \int_\B \big( \theta^{-2} + \theta^{\alpha-2} \big)|\nabla_x \theta|^2  \notag
	\\
	&\leq 
	C	\int_{\B} \bigg(\frac{1}{2}  \frac{|(\rho\uvect)_{0,\delta}|^2}{\rho_{0,\delta}}  +  \mathcal H_{\widetilde{\theta}, \xi}(\rho_{0,\delta}, \theta_{0,\delta}) + \frac{\delta}{\beta-1} \rho^\beta_{0,\delta}  - (\rho \uvect)_{0,\delta} \V(0,\cdot)  \bigg) \notag  \\
	& \	- \int_\B \bigg( (\rho_{0,\delta}-\overline \rho) \frac{\partial \mathcal H_{\widetilde{\theta} , \xi}(\overline \rho, 1) }{\partial \rho} + \mathcal H_{\widetilde{\theta}, \xi}(\overline \rho, 1)\bigg) \notag \\
	& \  +  C \bigg(1+\frac{1}{\lambda^{5/9}} + \frac{\nu^{\alpha+1}}{\lambda^\alpha}+\bigg(\frac{\xi^{2}}{\omega \lambda^{6/(\alpha+1)}}\bigg)^{\frac{\alpha+1}{\alpha-5}}  \bigg) ,
\end{align}
for almost all $\tau \in (0,T)$. In \eqref{energy-pena-6}, the left hand side is positive due to the hypothesis of thermodynamic stability \eqref{hypo-p_m}, \eqref{hypo-e_m}.

\vspace*{.2cm}
\noindent 
$\bullet$ {\bf The uniform bounds.}
(i) From \eqref{energy-pena-6}, we directly have 
\begin{align}\label{uniform-bound-1}
	\int_0^T \int_{\Gamma_t} |(\uvect - \V)\cdot \mathbf n|^2 & \leq \veps \, C \bigg(1+\frac{1}{\lambda^{5/9}} + \frac{\nu^{\alpha+1}}{\lambda^\alpha}+\bigg(\frac{\xi^{2}}{\omega \lambda^{6/(\alpha+1)}}\bigg)^{\frac{\alpha+1}{\alpha-5}}  \bigg)  ,  \\
	\esssup_{\tau \in [0,T]} \|\delta \rho^\beta(\tau, \cdot)\|_{L^1(\B)}    &\leq C \bigg(1+\frac{1}{\lambda^{5/9}} + \frac{\nu^{\alpha+1}}{\lambda^\alpha}+\bigg(\frac{\xi^{2}}{\omega \lambda^{6/(\alpha+1)}}\bigg)^{\frac{\alpha+1}{\alpha-5}}  \bigg) ,             \label{uniform-bound-2} \\	  
	\esssup_{\tau \in [0,T]} \|\sqrt{\rho} \uvect(\tau, \cdot)\|^2_{L^2(\B)}    &\leq C \bigg(1+\frac{1}{\lambda^{5/9}} + \frac{\nu^{\alpha+1}}{\lambda^\alpha}+\bigg(\frac{\xi^{2}}{\omega \lambda^{6/(\alpha+1)}}\bigg)^{\frac{\alpha+1}{\alpha-5}}  \bigg)  ,             \label{uniform-bound-3} \\	
	\text{and } \ \  \| \lambda \theta^{\alpha+1}\|_{L^1((0,T)\times \B)} &\leq C \bigg(1+\frac{1}{\lambda^{5/9}} + \frac{\nu^{\alpha+1}}{\lambda^{\alpha}}+\bigg(\frac{\xi^{2}}{\omega \lambda^{6/(\alpha+1)}}\bigg)^{\frac{\alpha+1}{\alpha-5}}  \bigg)   \label{uniform-bound-4}.   
\end{align}

\vspace*{.1cm}
\noindent 
(ii) We also have 
\begin{align}\label{uniformbound-stress} 
	\int_0^T \int_\B \frac{\widetilde \theta}{\theta}| \Ss_\omega : \nabla_x \uvect | \leq
	C \bigg(1+\frac{1}{\lambda^{5/9}} + \frac{\nu^{\alpha+1}}{\lambda^\alpha}+\bigg(\frac{\xi^{2}}{\omega \lambda^{6/(\alpha+1)}}\bigg)^{\frac{\alpha+1}{\alpha-5}}  \bigg) ,
\end{align}
and, by applying Korn-Poincar\'{e} inequality, one can further deduce that
\begin{align}
	c_1(\omega)\|\uvect\|^2_{L^2(0,T; W^{1,2}(\B; \mathbb R^3))} \leq  C \bigg(1+\frac{1}{\lambda^{5/9}} + \frac{\nu^{\alpha+1}}{\lambda^\alpha}+\bigg(\frac{\xi^{2}}{\omega \lambda^{6/(\alpha+1)}}\bigg)^{\frac{\alpha+1}{\alpha-5}}  \bigg) . 
	\label{uniform-bound-5}
\end{align}

\vspace*{.1cm}
\noindent
(iii) From \eqref{energy-pena-6}, we get
\begin{align}\label{esti-temp-sub}
	c_2(\nu)\int_0^T \int_\B \big(\theta^{-2} + \theta^{\alpha-2} \big) |\nabla_x \theta|^2 \leq C \bigg(1+\frac{1}{\lambda^{5/9}} + \frac{\nu^{\alpha+1}}{\lambda^\alpha}+\bigg(\frac{\xi^{2}}{\omega \lambda^{6/(\alpha+1)}}\bigg)^{\frac{\alpha+1}{\alpha-5}}  \bigg)  .
\end{align}
In other words, 
\begin{align}\label{bound-theta-derivative}
	&c_2(\nu)\int_0^T \int_\B  \Big( \big|\nabla_x \log(\theta)\big|^2 + \big|\nabla_x \theta^{\frac{\alpha}{2}}\big|^2  \Big) \notag \\ &\leq C \bigg(1+\frac{1}{\lambda^{5/9}} + \frac{\nu^{\alpha+1}}{\lambda^\alpha}+\bigg(\frac{\xi^{2}}{\omega \lambda^{6/(\alpha+1)}}\bigg)^{\frac{\alpha+1}{\alpha-5}}  \bigg)  .
\end{align}

\vspace*{.1cm}
\noindent
(iv)
Now, since $\mathcal H_{\widetilde{\theta} , \xi}$ is coercive (this can be proved in accordance with  \cite[Proposition 3.2]{Feireisl-Novotny-book}) and bounded from below, we get 
\begin{align}\label{bound-rho-e-xi}
	\esssup_{\tau\in [0,T]} \|\rho e_\xi(\tau, \cdot)\|_{L^1(\B)} \leq C \bigg(1+\frac{1}{\lambda^{5/9}} + \frac{\nu^{\alpha+1}}{\lambda^\alpha}+\bigg(\frac{\xi^{2}}{\omega \lambda^{6/(\alpha+1)}}\bigg)^{\frac{\alpha+1}{\alpha-5}}  \bigg)  ,
\end{align}
and consequently we have 
\begin{align}\label{bound-theta-L4} 
	\esssup_{\tau\in (0,T)} \|a_\xi \theta^4(\tau, \cdot)\|_{L^1(\B)} \leq C \bigg(1+\frac{1}{\lambda^{5/9}} + \frac{\nu^{\alpha+1}}{\lambda^\alpha}+\bigg(\frac{\xi^{2}}{\omega \lambda^{6/(\alpha+1)}}\bigg)^{\frac{\alpha+1}{\alpha-5}}  \bigg) , \\
	\label{rho_5/3}  \esssup_{\tau\in (0,T)} \|\rho(\tau, \cdot)\|^{\frac{5}{3}}_{L^{\frac{5}{3}}(\B)}  \leq C \bigg(1+\frac{1}{\lambda^{5/9}} + \frac{\nu^{\alpha+1}}{\lambda^\alpha}+\bigg(\frac{\xi^{2}}{\omega \lambda^{6/(\alpha+1)}}\bigg)^{\frac{\alpha+1}{\alpha-5}}  \bigg) .
\end{align}

\vspace*{.1cm}
\noindent
(v)  The bound \eqref{rho_5/3}  gives the uniform  bound for the gravitational potential $\Psi$ w.r.t. ``$\veps$". In fact, by following the steps as we obtained \eqref{bound-poisson}, one could get
\begin{align}\label{bound-poisson-eps}
	\|\Psi\|^2_{L^2(0,T; W^{1,2}(\B))} &\leq  \|\rho\|^{\frac{5}{3}}_{L^\infty(0,T; L^\frac{5}{3}(\B))} + C(\rho_0,g) \notag \\
	&\leq C \bigg(1+\frac{1}{\lambda^{5/9}} + \frac{\nu^{\alpha+1}}{\lambda^\alpha}+\bigg(\frac{\xi^{2}}{\omega \lambda^{6/(\alpha+1)}}\bigg)^{\frac{\alpha+1}{\alpha-5}}  \bigg) .
\end{align}

\vspace*{.1cm}
\noindent
(vi)
Then by \eqref{bound-theta-derivative}, \eqref{uniform-bound-4} and generalized  Poincar\'e inequality from \Cref{Poincare} (since the condition \eqref{measurable-condition} satisfies), we deduce that 
\begin{align}\label{bound-theta-gamma}
	&\|\theta^\gamma \|^2_{L^2(0,T; W^{1,2}(\B))} \leq \widehat C_1, 
	\quad \text{for any } 1\leq \gamma \leq \frac{\alpha}{2} , \ \ \text{where } \alpha >6 ,
\end{align}
where the constant $\widehat C_1>0$ may depend on the parameters $\xi, \nu, \omega, \lambda$ but not on $\veps$.

The estimate \eqref{bound-theta-derivative}  also provides us
\begin{align} 
	\label{bound-log-theta}
	&	\|\nabla_x \log \theta \|^2_{L^2(0,T; L^{2}(\B))} \leq \widehat C_2,  
\end{align}
for some constant $\widehat C_2>0$ that is independent in $\veps$.

\vspace*{.1cm}  
\noindent
(vii)  Further,  we have 
\begin{align}\label{bound-kappa-theta-2}
	&\int_0^T \int_\B	\frac{\kappa_\nu(\theta,t,x)}{\theta} |\nabla_x \theta|   \notag \\
	& 
	\leq \frac{1}{2}	\int_0^T \int_\B \frac{\kappa_\nu(\theta, t,x)}{\theta^2} |\nabla_x \theta|^2 + \int_0^\tau \int_\B \kappa_\nu(\theta,t,x)  \notag \\
	& \leq C(\nu) \int_0^T \int_\B  \Big( \big|\nabla_x \log(\theta)\big|^2 + \big|\nabla_x \theta^{\frac{\alpha}{2}}\big|^2  \Big) + C(\nu) \int_0^T \int_\B (1+ \theta^\alpha) 
	\leq \widehat C_3 , 
\end{align} 
where we have used the bounds \eqref{bound-theta-derivative} and \eqref{uniform-bound-4}. The constant $\widehat C_3>0$  does depend on the parameters $\xi, \nu, \omega, \lambda$ but not on $\veps$.

\vspace*{.1cm}
\noindent
(viii) Using the technique based on the {\em Bogovskii operator}, 
one can get more information about the modified pressure, namely 
$$p_{\xi, \delta}(\rho, \theta) =  p_M(\rho, \theta) + \frac{a_\xi}{3}\theta^4 + \delta \rho^\beta \quad (\beta\geq 4). $$
The idea is to use the multipliers of the form 
\begin{align*}
	\psi\, \mathcal L \big[ \rho -\frac{1}{|\B|} \int_\B \rho    \big], \quad \psi \in \mathcal D(0,T) , \ \ 0\leq \psi\leq 1
\end{align*}
in the momentum equation\eqref{momentum-eq},
where $\mathcal L$ is defined as follows:
the function $\mathbf v = \mathcal L[f]$ such that it solves the problem
$$  \Div ( \mathbf v) = f \quad \text{in } \B, \quad \mathbf v |_{\partial \B} = 0  .  $$
In what follows, one can ensure that there exists some $\upsilon>0$ such that
\begin{align}\label{Bogovskii}
	\iint_K \left( p_{\xi,\delta} (p,\theta) \rho^\upsilon + \delta \rho^{\beta+\upsilon} \right) \leq C(K),
\end{align}
for any compact set $K\subset (0,T)\times \B$ such that 
\begin{align}\label{Set-K}
	K \cap \left(\cup_{\tau\in [0,T]} \big(\{\tau\} \times \Gamma_\tau\big)  \right) = \emptyset .
\end{align}
Moreover, the constant $\upsilon$ can be chosen independently of $\veps$, $\omega$, $\lambda$, $\xi$, $\nu$ and $\delta$. For more details, we refer \cite[Section 4.2]{Feireisl-NSF-1} or \cite{Feireisl-hana}.

\vspace*{.1cm}
\noindent 
(ix)  To find a suitable estimate for the term $\rho s_\xi(\rho, \theta)$, we recall the estimates \eqref{Fact-2} and \eqref{Fact-3}, which gives 
\begin{align}\label{bound-rho_s} 
	\rho s_\xi(\rho, \theta) \leq 
	\begin{dcases}
		c \rho + \frac{4a_\xi}{3} \theta^3 , \qquad \qquad \qquad \qquad \qquad \ \ \text{for } \frac{\rho}{\theta^{3/2}} >1, \\
		C \Big(\rho^{\frac{1}{2}} + \rho +  \rho [\log \theta]^+   \Big)   + \frac{4a_\xi}{3}\theta^3  \\
		\qquad \leq C \Big(\theta^{\frac{3}{4}} + \theta^{\frac{3}{2}} +  \theta^2     \Big)   + \frac{4a_\xi}{3}\theta^3 , \ \ \, \text{for } \frac{\rho}{\theta^{3/2}} \leq 1 .
	\end{dcases}
\end{align}
Now, using the  estimates \eqref{uniform-bound-4}, \eqref{rho_5/3}, one can deduce that 
\begin{align}\label{rho-s-xi}    
	\|\rho s_\xi(\rho, \theta)\|_{L^p((0,T)\times \B)} \leq \widehat C_4  , \quad \text{for certain  } p\geq 1,
\end{align}
where the constant $\widehat C_4 >0$ that may depend on  the parameters $\xi, \nu, \lambda, \omega$  
but not on $\veps$.

One can  further deduce that 
\begin{align}\label{rho-s-xi-u}
	\|\rho s_{\xi}(\rho, \theta) \uvect\|_{L^1((0,T)\times \B)} \leq C(\lambda)  ,
\end{align}
and again the constant $C(\lambda)>0$ is independent   in  
$\veps$. This can be determined  as follows: the crucial term to estimate is $a_\xi\theta^3\uvect$ due to the bound  \eqref{bound-rho_s}.   Indeed, we find (since $\alpha>6$)
\begin{align*}
	\left|\int_0^T \int_\B a_\xi  \theta^3 \uvect \right| 
	&\leq C \int_0^T \int_\B \theta^6 + C \int_0^T \int_\B |\uvect|^2 \\
	& \leq C(\lambda) \|\lambda \theta^{\alpha+1}\|_{L^1((0,T)\times \B)} + C \left\|\uvect \right\|^2_{L^2(0,T; W^{1,2}(\B; \mathbb R^3))},
	%
\end{align*} 
which is uniformly  bounded in $\veps>0$  by means of \eqref{uniform-bound-4} and \eqref{uniform-bound-5}.

\section{Passing to the limit} 

In this section, we first perform the limit $\veps\to 0$ and then together we pass to the limit $\lambda$, $\xi$, $\nu$, $\omega$ and finally $\delta$ to $0$.

\subsection{Penalization limit: passing with $\veps\to 0$}  

In this subsection, we fix all the parameters $\delta$, $\lambda$, $\nu$, $\xi$ and  $\omega$.    Then,
passing to the limit $\veps\to 0$, we directly obtain
\begin{align}\label{retriving boundary}
	(\uvect - \V )\cdot \mathbf n \big|_{\Gamma_\tau} = 0 ,\quad \text{for a.a. } \ \tau \in [0,T],  
\end{align}
so we retrieve the impermeability boundary condition \eqref{imperm}.

\vspace*{.1cm}
\noindent 
$\bullet$ 
By \eqref{bound-theta-L4}, \eqref{rho_5/3} and  \eqref{bound-theta-gamma}, we respectively have (up to a suitable subsequence)
\begin{align}
	&	\theta_\veps \to \theta \quad \text{weakly$^*$ in }   \ L^\infty(0,T; L^4(\B))  \ \text{ as $\veps \to 0$}   , \label{conv-theta-1}   \\
	&   \rho_\veps \to \rho \quad \text{weakly$^*$ in }   \ L^\infty(0,T; L^{\frac{5}{3}}(\B)) \ \text{ as $\veps \to 0$} , \label{conv-rho-1}   \\   
	& \theta_\veps \to \theta \quad \text{weakly in } \  L^2(0,T; W^{1,2}(\B)) \ \text{ as $\veps \to 0$}   ,  \label{conv-theta-2} 
\end{align}
Due to \eqref{uniform-bound-4} and \eqref{bound-theta-L4}, we also have 
\begin{align}\label{conv-theta-8}
	&\theta^{\alpha+1}_\veps  \to \overline{\theta^{\alpha+1}} \quad \text{weakly in } \ L^1((0,T)\times \B) \ \text{ as $\veps \to 0$} , \\
	\label{conv-theta-4}
	&	\theta^4_\veps  \to \overline{\theta^4} \quad \text{weakly in } \ L^1((0,T)\times \B) \ \text{ as $\veps \to 0$} .
\end{align}
Here and in the sequel, the ``bar" denotes a weak limit of a composed or nonlinear function. 

\vspace*{.1cm}
\noindent 
$\bullet$ Thanks to  \eqref{uniform-bound-5} and \eqref{bound-poisson-eps}, we respectively have 
\begin{align}
	\label{conv-u}
	&\uvect_\veps  \to \uvect \quad \text{weakly in } \ L^2(0,T; W^{1,2}(\B; \mathbb R^3)) \ \ \text{as $\veps \to 0$} , \ \,
	\text{and} \\
	\label{conv-poisson}
	&	\Psi_\veps  \to \Psi \quad \text{weakly in } \ L^2(0,T; W^{1,2}(\B)) \ \ \text{as $\veps \to 0$} .
\end{align}

\vspace*{.1cm}
\noindent 
$\bullet$
We have also better convergence result of $\{\rho_\veps\}_\veps$ than \eqref{conv-rho-1}: using the continuity equation \eqref{weak-conti} one indeed get 
\begin{align}\label{conv-rho-2} 
	\rho_\veps \to \rho \quad \text{in }  \ \C_{\text{weak}}([0,T]; L^{\frac{5}{3}}(\B)) \ \text{ as $\veps \to 0$} .   
\end{align}   
The above fact, together with \eqref{conv-u} and the fact $L^{\frac{5}{3}}(\B) \hookrightarrow W^{-1,2}(\B)$ is compact, one has 
\begin{align}\label{conv-rho-u}
	\rho_\veps \uvect_\veps \to \rho \uvect \quad \text{weakly$^*$ in } \ L^{\infty}(0,T; L^{\frac{5}{4}}(\B, \mathbb R^3))  \ \text{ as $\veps \to 0$}
\end{align}

\vspace*{.1cm}
\noindent 
$\bullet$ 
Now, by using   \eqref{conv-theta-4} and \eqref{Bogovskii},  the asymptotic behavior \eqref{mole_bound_p_M} of $p_M$, and then by utilizing  \eqref{conv-rho-2}, \eqref{conv-theta-2} we have
\begin{align}\label{limit-pressure}
	p_{\xi, \delta}(\rho_\veps, \theta_\veps) = p_M(\rho_\veps, \theta_\veps) + \frac{a_\xi}{3} \theta^4_\veps + \delta \rho^\beta_\veps \to    \overline{p_{M}(\rho, \theta)} + \frac{a_\xi}{3} \overline{\theta^4} + \delta \overline{\rho^\beta} \quad \text{weakly in } \ L^1(K) ,  
\end{align}
where $K$ is as given by \eqref{Bogovskii}--\eqref{Set-K}.

\vspace*{.1cm}
\noindent 
$\bullet$  
Further, since $W^{1,2}_0(\B, \mathbb R^3)\hookrightarrow L^6(\B, \mathbb R^3)$ is compact, using \eqref{conv-u}, the convective term satisfies (by following the steps of \cite[Section 4]{sarka-aneta-al-JMPA} or \cite[Chapter 3.6.4]{Feireisl-Novotny-book})
\begin{align}\label{weak-limit-rho_u-cross_u}
	\rho_\veps \uvect_\veps \otimes \uvect_\veps \to  \overline{\rho \uvect \otimes \uvect} \quad \text{weakly in } L^2(0,T; L^{\frac{30}{29}}(\B, \mathbb R^3)) ,
\end{align}
and indeed, 
\begin{align}
	\overline{\rho \uvect \otimes \uvect} = \rho \uvect \otimes \uvect \quad \text{a.a. in } (0,T)\times \B,
\end{align}
since $L^{\frac{5}{4}}(\B)\hookrightarrow W^{-1,2}(\B)$ is compact.

\subsection{Pointwise convergence of the temperature and the density} 

$\bullet$ In order to show a.e. convergence of
the temperature, we follow the technique  based on the Div–Curl Lemma   and
Young measures methods   (see for instance \cite[Section 3.6.2]{Feireisl-Novotny-book}).       
To this end, we  set 
\begin{align}
	&	\mathbf U_\veps = \left[\rho_\veps s_\xi (\rho_\veps , \theta_\veps ), \  \rho_\veps s_\xi (\rho_\veps, \theta_\veps) \uvect_\veps + \frac{\kappa_\nu(\theta_\veps) \nabla_x \theta_\veps}{\theta_\veps}   \right], \\
	& \mathbf W_\veps = \left[G(\theta_\veps), 0, 0, 0  \right] , 
\end{align}
where $G$ is bounded and globally Lipschitz function in 
$[0, \infty)$.  Then due to the estimates obtained in previous section, $\Div_{t,x} \mathbf U_\veps$ is precompact in $W^{-1,s}((0,T)\times \B)$ and $\text{Curl}_{t,x} \mathbf W_\veps$ is precompact in $W^{-1,s}((0,T)\times \B)^{4\times 4}$ with certain $s>1$. Therefore using the Div-Curl lemma 
for $\mathbf U_\veps$ and $\mathbf W_\veps$, we may derive that 
\begin{align}\label{ineq-1}
	\overline{\rho s_{\xi}(\rho_\veps, \theta_\veps) G(\theta_\veps)} = \overline{\rho s_{\xi}(\rho_\veps, \theta_\veps) }\ \overline{G(\theta_\veps)} 
\end{align}
In fact, by applying the theory of parameterized (Young) measures (see \cite[Section 3.6.2]{Feireisl-Novotny-book}), one can show that
\begin{align}\label{ineq-2} 
	\overline{\rho s_M(\rho, \theta) G(\theta)} \geq 	\overline{\rho s_M(\rho, \theta)} \ \overline{ G(\theta)}, \qquad \overline{\theta^3 G(\theta)} \geq \overline{\theta^3} \ \overline{G(\theta)} .
\end{align}
Combining \eqref{ineq-1}--\eqref{ineq-2} and taking $G(\theta)= \theta$, we deduce 
\begin{align*}
	\overline{\theta^4} = \overline{\theta^3} \, \theta  ,
\end{align*}
which  yields 
\begin{align}\label{limit-theta}
	\theta_\veps \to \theta  \quad \text{a.a. in } \ (0,T)\times \B. 
\end{align}
Moreover,  thanks to \eqref{bound-log-theta} and  using the generalized Poincar\'{e} inequality in \Cref{Poincare}, one can prove that $\log \theta \in L^2((0,T) \times \B)$ which ensures that the  limit temperature is positive a.e. on the set $(0,T)\times \B$.

\vspace*{.1cm}
\noindent 
$\bullet$
Next proceeding as \cite[Section 4.1.2]{Sarka-et-al-ZAMP},   one can further obtain 
\begin{align}\label{limit-rho}
	\rho_\veps \to \rho \quad \text{a.a. in } \ (0,T)\times \B .
\end{align}

\vspace*{.1cm}
\noindent 
$\bullet$
Then, using \eqref{limit-theta}, \eqref{limit-rho}, \eqref{conv-u} and the bounds \eqref{rho-s-xi} and \eqref{rho-s-xi-u},  
we identify the following limits: 
\begin{align}\label{limits-stress-entropy}
	\begin{dcases}
		\Ss_\omega(\theta_\veps, \nabla_x \uvect_\veps) \to \Ss_\omega(\theta, \nabla_x \uvect) \  \ \ &\text{ weakly in } \ L^1((0,T)\times \B) , \\
		\rho_\veps  s_\xi(\rho_\veps, \theta_\veps) \to \rho s_\xi (\rho, \theta) \ \ \ &\text{ weakly in } \ L^1((0,T)\times \B), \\
		\rho_\veps s_\xi(\rho_\veps, \theta_\veps) \uvect_\veps \to \rho s_\xi(\rho, \theta) \uvect \ \ \ &\text{ weakly in } \ L^1((0,T)\times \B) ,
	\end{dcases}
\end{align}
up to a suitable subsequence.

\subsection{The limiting system as $\veps\to 0$} In this subsection, we summarize  the limiting  behaviors of all the quantities from the previous two subsections, and write weak formulation for the limiting system (as $\veps \to 0$). 

\vspace*{.1cm}
\noindent 
$\bullet$ Passing to the limit as $\veps\to 0$, we can obtain the continuity equation exactly as \eqref{weak-conti}.

\vspace*{.1cm}
\noindent 
$\bullet$
Next, we proceed to pass to the limit in the momentum equation \eqref{weak-momen}. Having in hand the local estimates (and limits) of the pressure term (see \eqref{Bogovskii} and \eqref{limit-pressure}), we consider the test functions 
\begin{equation} 
	\begin{aligned}\label{test-func}
		\boldvphi \in \C_c^1( [0,T) ; W^{1, \infty}(\B; \mathbb R^3) ) , \ \  \text{Supp}\, [\Div_x \boldvphi(\tau, \cdot)] \cap \Gamma_\tau = \emptyset, \\
		\boldvphi \cdot \mathbf n |_{\Gamma_\tau} = 0 , \ \ \  \forall \tau \in [0,T] .
	\end{aligned}  
\end{equation}
Then in accordance with the limits in the previous two subsections, we have upon $\veps \to 0$,
\begin{align}\label{weak-momen-limit}
	-	\int_0^T \int_{\B} \left( \rho \uvect \cdot \partial_t  \boldvphi   +  \rho[\uvect \otimes \uvect] : \nabla_x  \boldvphi + p_{\xi, \delta}(\rho, \theta) \Div_x \boldvphi   \right)   
	+\int_0^T \int_{\B} \Ss_\omega : \nabla_x \boldvphi  \notag \\
	\int_0^T \int_\B \rho \nabla_x \Psi \cdot \nabla_x \vphi		  =
	\int_{\B}   (\rho \uvect)_{0,\delta}\cdot  \boldvphi(0, \cdot) ,
\end{align}
for any test function as in \eqref{test-func}.

\vspace*{.1cm}
\noindent 
$\bullet$
The weak formulations for $\Psi$  has same expressions as \eqref{pena-weak-poisson}  after passing to the limit as $\veps \to 0$.

\vspace*{.1cm}
\noindent 
$\bullet$
Further, by using \eqref{bound-kappa-theta-2}, we have  
\begin{align}\label{weak-limit-kappa-theta}
	\frac{\kappa_\nu(\theta_\veps)}{\theta_\veps} |\nabla_x \theta_\veps| \to  \frac{\kappa_\nu(\theta)}{\theta} |\nabla_x \theta| \quad \text{weakly in } \ L^1((0,T)\times \B) . 
\end{align}
We also have that the terms $\disp \frac{1}{\theta_\veps} \Ss_\omega(\theta_\veps, \nabla_x \uvect_\veps): \nabla_x \uvect_\veps$ and  $\disp \frac{\kappa_\nu(\theta_\veps)|\nabla_x \theta_\veps|^2}{\theta_\veps^2}$ are weakly lower semicontinuous.
These, together with \eqref{conv-theta-4},  \eqref{limit-theta} and \eqref{limit-rho}, the entropy inequality follows (as $\veps\to 0$) 
\begin{align}\label{limit-enropy-ineq}
	-\int_0^T \int_{\B} \left(\rho s_\xi(\rho,\theta) \left( \partial_t \vphi +  \uvect \cdot \nabla_x \vphi\right) - \frac{\kappa_\nu(\theta, t,x)}{\theta} \nabla_x \theta \cdot \nabla_x \vphi  \right) 
	\notag \\
	- \int_{\B} \rho_{0,\delta} s(\rho_{0,\delta}, \theta_{0,\delta}) \vphi(0,\cdot)   
	+ \int_0^T \int_\B \lambda \theta^{\alpha} \vphi 
	\notag \\
	\geq \int_0^T \int_{\B} \frac{\vphi}{\theta}  \left( \Ss_\omega : \nabla_x \uvect 
	+ \frac{\kappa_\nu(\theta, t,x)}{\theta} |\nabla_x \theta|^2   \right) ,
\end{align}
for any test function $\vphi \in \C^1_c([0,T)\times \B; \mathbb R)$ with $\vphi \geq 0$. 

\vspace*{.1cm}
\noindent 
$\bullet$
Let us pass to the limit $\veps\to 0$ in the modified energy inequality \eqref{energy-pena-1-2}. Thanks to the almost everywhere convergence results \eqref{limit-theta}, \eqref{limit-rho}, the bound \eqref{bound-rho-e-xi} and the fact that $\{\rho_\veps e_{\xi}(\rho_\veps, \theta_\veps )\}_{\veps}$ is nonnegative, we have, using the Fatou's lemma, 
\begin{align*}
	\limsup_{\veps \to 0} \int_0^T \int_{\B} \rho_\veps e_{\xi}(\rho_\veps, \theta_\veps ) \partial_t \psi \leq \int_0^T \int_{\B} \rho e_{\xi}(\rho, \theta ) \partial_t \psi
\end{align*} 
by choosing $\psi \in \C^1_c([0,T))$ such that $\partial_t \psi \leq 0$.
Similarly, one has 
\begin{align*}
	\limsup_{\veps \to 0} \int_0^T \int_{\B} \rho_\veps |\uvect_\veps|^2 \partial_t \psi \leq 	 \int_0^T \int_{\B} \rho |\uvect|^2 \partial_t \psi . 
\end{align*}
Using the above information
and gathering all the limits \eqref{limit-theta}, \eqref{limit-rho}, \eqref{limit-pressure},  \eqref{weak-limit-kappa-theta}, \eqref{weak-limit-rho_u-cross_u}, the limiting ballistic energy inequality reads as (passing to the limit as $\veps\to 0$ in \eqref{energy-pena-1-2}) 
\begin{align}\label{limit-energy-pena}
	&	-\int_0^T \partial_t \psi \int_\B \bigg(\frac{1}{2} \rho |\uvect|^2  + \mathcal H_{\widetilde{\theta}, \xi}(\rho, \theta)   + \frac{\delta}{\beta-1} \rho^\beta\bigg)  + \int_0^T \psi \int_\B \lambda \theta^{\alpha+1} \notag \\
	& 	  
	+ \int_0^T \psi \int_{\B} \frac{\widetilde{\theta}}{\theta} \bigg( \Ss_\omega : \nabla_x \uvect +  \frac{\kappa_\nu(\theta,t,x)}{\theta} |\nabla_x \theta|^2     \bigg) 
	\notag \\	
	&\leq  
	\psi(0)\int_{\B} \bigg(\frac{1}{2}  \frac{|(\rho\uvect)_{0,\delta}|^2}{\rho_{0,\delta}}  + 	\mathcal H_{\widetilde{\theta}, \xi}(\rho_{0,\delta}, \theta_{0,\delta})  + \frac{\delta}{\beta-1} \rho^\beta_{0,\delta} - (\rho \uvect)_0 \V(0)  \bigg) 
	\notag \\
	& \ 	+ \int_0^T \psi \int_\B \lambda \theta^{\alpha}  \widetilde{\theta} 
	- \int_0^T \int_\B \rho \uvect \cdot \partial_t (\psi \V)  -\int_0^T \psi \int_\B \rho \nabla_x \Psi \cdot \V  \notag  \\ 
	& \ 	- \int_0^T  \psi\int_{\B} \Big(\rho[\uvect \otimes \uvect]    : \nabla_x \V  - \Ss_\omega : \nabla_x \V  + p_{\xi, \delta}(\rho,\theta)\, \Div_x \V  \Big) 
	\notag \\ 
	& \
	-	\int_0^T \psi \int_{\B} \Big[\rho s_{\xi}(\rho,\theta) \big(\partial_t \widetilde{\theta}  + \uvect \cdot \nabla_x \widetilde{\theta} \big) - \frac{\kappa_\nu(\theta, t, x)}{\theta} \nabla_x \theta \cdot \nabla_x \widetilde{\theta}   \Big] ,
\end{align}
for all $\psi \in \C^1_c([0,T))$ with $\psi\geq 0$ and $\partial_t \psi \leq 0$.

\subsection{Get rid of the density dependent solid part}\label{Section:Solid-part} 

Let us take care the density-dependent terms in the solid part $((0,T)\times \B)\setminus Q_T$. We use \cite[Lemma 4.1]{Sarka-Kreml-Neustupa-Feireisl-Stebel} (see also \cite[Section 4.1.4]{sarka-aneta-al-JMPA}) to conclude that 
the density $\rho$ remains ``zero" on the solid part if it was so initially, thanks to the continuity equation and the fact that density is square-integrable. Here, we must mention that the square-integrability of  density is identified from the estimate of $\delta \rho^\beta$ given by \eqref{uniform-bound-2}.

\vspace*{.1cm}

We also recall that, the parameters $\omega$, $\xi$ and $\nu$ are not  involved in $\Omega_t$ ($t\in [0,T]$) as per our extension strategy given in the beginning of Section \ref{sec-penalized}. 

\vspace*{.1cm} 
\noindent 
$\bullet$ Since, we have set the initial data $\rho_{0,\delta}$ to be zero in $\B\setminus \Omega_0$, it remains zero in $\B\setminus \Omega_t$ due to \cite[Lemma 4.1]{Sarka-Kreml-Neustupa-Feireisl-Stebel}.  This leads to the following weak formulation for the continuity equation upon passing to the limit as $\veps \to 0$, 
\begin{align}\label{weak-conti-recover}
	-\int_0^T \int_{\Omega_t} \rho B(\rho)\left( \partial_t \vphi +    \uvect \cdot \nabla_x \vphi \right) 
	+	\int_0^T \int_{\Omega_t} b(\rho) \Div_x \uvect \vphi 
	=  \int_{\Omega_0} \rho_{0,\delta} B(\rho_{0,\delta})\vphi(0, \cdot),
\end{align}
for any  test function $\vphi \in \C^1_c( [0,T)\times \B; \mathbb R)$  and any $b \in L^\infty \cap \C([0,+\infty))$ such that $b(0)=0$ and $\displaystyle B(\rho) = B(1)+ \int_{1}^\rho \frac{b(z)}{z^2}$. 

\vspace*{.1cm}
\noindent 
$\bullet$ Using the fact $\rho=0$ in $\B\setminus \Omega_t$ for any $t\in [0,T]$, the momentum equation now  reads 
\begin{align}\label{weak-momen-recover}
	&-	\int_0^T \int_{\Omega_t} \left( \rho \uvect \cdot \partial_t  \boldvphi   +  \rho[\uvect \otimes \uvect] : \nabla_x  \boldvphi + p_{\delta}(\rho, \theta) \Div_x \boldvphi   \right)   
	+\int_0^T \int_{\Omega_t} \Ss : \nabla_x \boldvphi \notag \\
	& \qquad  - \int_0^T \int_{\Omega_t} \rho \nabla_x \Psi \cdot \boldvphi 
	=
	\int_{\Omega_0}   (\rho \uvect)_{0,\delta}\cdot  \boldvphi(0, \cdot) -\int_0^T \int_{\B\setminus \Omega_t} \Ss_\omega : \nabla_x \boldvphi  \notag \\ 
	& \hspace{4cm}+ \int_0^T \int_{\B\setminus \Omega_t} \frac{a_\xi}{3}\theta^4 \Div_x \boldvphi ,
\end{align}
for any test function $\boldvphi$ satisfying 
\begin{align*}
	\boldvphi \in \C^1_c([0,T)\times \B;\mathbb R^3) \quad \text{with} \quad \boldvphi(\tau, \cdot) \cdot \mathbf n \big|_{\Gamma_\tau} =0 \ \ \text{for any } \tau\in [0,T].
\end{align*}

\vspace*{.1cm} 
\noindent
$\bullet$  Since $\rho=0$ in $\B\setminus \Omega_t$ for each $t\in [0,T]$, the weak formulation for $\Psi$ from \eqref{pena-weak-poisson} simply reduces to
\begin{align}\label{pena-weak-poisson-recover}
	\int_0^T \int_{\Omega_t} \nabla_x \Psi \cdot \nabla_x \vphi = \int_0^T \int_{\Omega_t} \rho \vphi , 
\end{align}
for any test function $\vphi \in \C^1((0,T)\times \B; \mathbb R)$.

\vspace*{.1cm}
\noindent 
$\bullet$ Next, we write the limiting entropy inequality (recall the fact that $s_\xi(\rho, \theta)=s_M(\rho, \theta)+ \frac{4a_\xi}{3\rho}\theta^3$)
\begin{align}\label{enropy-ineq-recover}
	-\int_0^T \int_{\Omega_t} \rho s(\rho,\theta) \left(\partial_t \vphi + \
	\uvect \cdot \nabla_x \vphi \right) -	  \int_0^T \int_{\B\setminus \Omega_t} \frac{4a_\xi}{3}\theta^3  \left(\partial_t \vphi + 
	\uvect \cdot \nabla_x \vphi \right) 
	\notag\\
	+ \int_0^T \int_{\Omega_t} \frac{\kappa(\theta, t,x)}{\theta} \nabla_x \theta \cdot \nabla_x \vphi 
	+	 \int_0^T \int_{\B \setminus \Omega_t} \frac{\kappa_\nu(\theta, t,x)}{\theta} \nabla_x \theta \cdot \nabla_x \vphi 
	\notag \\
	- \int_{\Omega_0} \rho_{0,\delta} s(\rho_{0,\delta}, \theta_{0,\delta}) \vphi(0,\cdot) 	- \int_{\B\setminus \Omega_0} \frac{4a_\xi}{3}\theta_{0,\delta}^3 \vphi(0,\cdot) 
	+	 \int_0^T \int_\B \lambda \theta^{\alpha} \vphi 
	\notag \\
	\geq \int_0^T \int_{\Omega_t} \frac{\vphi}{\theta}  \left( \Ss: \nabla_x \uvect   + \frac{\kappa(\theta, t,x)}{\theta} |\nabla_x \theta|^2   \right)
	\notag \\
	+ \int_0^T \int_{\B\setminus \Omega_t} \frac{\vphi}{\theta}  \left( \Ss_\omega : \nabla_x \uvect    + \frac{\kappa_\nu(\theta, t,x)}{\theta} |\nabla_x \theta|^2   \right), 
\end{align}
for any test function $\vphi \in \C^1_c([0,T)\times \B)$ with $\vphi\geq 0$.

\noindent 
$\bullet$  We shall now look at the ballistic energy inequality \eqref{limit-energy-pena}.  
It has the following form now. 
\begin{align}\label{energy-pena-recover}
	&	- \int_0^T\int_{\Omega_t} \partial_t \psi\bigg(\frac{1}{2} \rho |\uvect |^2  + \rho e(\rho, \theta) - \rho s(\rho, \theta) \widetilde{\theta} 
	+ \frac{\delta \rho^\beta}{\beta-1} \bigg)
	-  \int_0^T \int_{\B\setminus\Omega_t} a_\xi \theta^4 \partial_t \psi   \notag  \\
	&  +  \int_0^T \int_{\B\setminus \Omega_t}\frac{4a_\xi}{3} \theta^3 \widetilde{\theta} \partial_t \psi  
	+ \int_0^T \psi \int_{\Omega_t} \frac{\widetilde{\theta}}{\theta} \bigg( \Ss : \nabla_x \uvect  + \frac{\kappa(\theta,t,x)}{\theta} |\nabla_x \theta|^2     \bigg)     \notag \\
	& +  \int_0^T \psi \int_{\B\setminus \Omega_t} \frac{\widetilde{\theta}}{\theta} \bigg( \Ss_\omega : \nabla_x \uvect + \frac{\kappa_\nu(\theta,t,x)}{\theta} |\nabla_x \theta|^2     \bigg)   + \int_0^T \psi \int_\B \lambda \theta^{\alpha+1}  	  
	\notag \\	
	&\leq   
	\psi(0)\int_{\Omega_0}\bigg(\frac{1}{2}  \frac{|(\rho\uvect)_{0,\delta}|^2}{\rho_{0,\delta}}  + 	\rho_{0,\delta} e(\rho_{0,\delta}, \theta_{0,\delta}) - \rho_{0,\delta} s(\rho_{0,\delta}, \theta_{0,\delta})\widetilde{\theta}(0)  + \frac{\delta \rho_{0,\delta}^\beta}{\beta-1}    \bigg) \notag \\ 
	& - \psi(0) \int_{\Omega_0}  (\rho \uvect)_{0,\delta} \V(0)  + \psi(0) \int_{\B\setminus \Omega_0}  a_\xi \Big(\theta^4_{0,\delta} -  \frac{4}{3}\theta^3_{0,\delta} \widetilde{\theta}(0)  \Big)  \notag \\
	&+   \int_0^T \psi \int_\B \lambda \theta^{\alpha} \, \widetilde{\theta}  - \int_0^T \int_{\Omega_t} \rho \uvect \cdot \partial_t (\psi \V)   - \int_0^T \psi \int_{\Omega_t} \rho \nabla_x \Psi \cdot   \V \notag \\ 
	&	- \int_0^T \psi \int_{\Omega_t} \Big(\rho[\uvect \otimes \uvect]    : \nabla_x \V  - \Ss : \nabla_x \V  + p_{ \delta}(\rho,\theta)\, \Div_x \V  \Big)   
	\notag \\
	& + \int_0^T \psi \int_{\B\setminus \Omega_t} \Ss_\omega : \nabla_x \V  -  \int_0^T \psi \int_{\B\setminus \Omega_t} \frac{a_\xi}{3} \theta^4 \Div_x \V  \notag \\
	&
	-	\int_0^T \psi \int_{\Omega_t} \bigg[\rho s(\rho,\theta) \big(\partial_t \widetilde{\theta}  + \uvect \cdot \nabla_x \widetilde{\theta}  \big) - \frac{\kappa(\theta, t, x)}{\theta} \nabla_x \theta \cdot \nabla_x \widetilde{\theta}  \bigg]  \notag \\
	& - 	\int_0^T \psi \int_{\B \setminus \Omega_t} \frac{4a_\xi}{3}\theta^3  \Big(\partial_t \widetilde{\theta} + \uvect \cdot \nabla_x \widetilde{\theta}  \Big)  
	+ \int_0^T \psi \int_{\B \setminus \Omega_t} \frac{\kappa_\nu(\theta, t, x)}{\theta} \nabla_x \theta \cdot \nabla_x \widetilde{\theta}  ,
\end{align}
for any $\psi \in \C^1_c([0,T))$ with $\psi\geq 0$ and $\partial_t \psi \leq 0$.  



\subsection{Passing to the limit of other parameters}\label{Section-Scaling}
In this section, we first  consider the following scaling for all the parameters $\omega, \xi, \nu, \lambda$. Let  
\begin{align}\label{scaling} 
	\lambda = \nu^{\frac{1}{3}}   = \omega^{\frac{1}{3}} =  \xi^{\frac{1}{6}} = h \quad \text{for } h>0.
\end{align}

\subsubsection{Step 1. Bounds of the integrals in $((0,T)\times \B) \setminus Q_T$}

In this step, we shall find suitable  bounds of the integrals in $((0,T)\times \B) \setminus Q_T$.

\vspace*{.1cm}
\noindent 
$\bullet$ We start with the following. Recall the weak formulation \eqref{weak-momen-recover} for the momentum equation and focus on the integrals on $\B\setminus \Omega_t$. First,  we compute
\begin{align}\label{limit-1}  
	\left|	\int_0^T \int_{\B\setminus \Omega_t} \frac{a_\xi}{3} \theta^4 \Div_x \boldvphi \right| 
	&\leq C\|\Div_x \boldvphi \|_{L^\infty((0,T)\times \B)}  \, \frac{\xi}{\lambda^{\frac{4}{\alpha+1}}} \bigg( \int_0^T \int_{\B} \lambda \theta^{\alpha+1} \bigg)^{\frac{4}{\alpha+1}} 
	\notag \\
	& \leq  \frac{C\xi}{\lambda^{\frac{4}{\alpha+1}}} \bigg[  1 + \frac{1}{\lambda^{\frac{5}{9}}}  +  \frac{\nu^{\alpha+1}}{\lambda^\alpha} + \bigg(\frac{\xi^{2}}{\omega \lambda^{\frac{6}{\alpha+1}}}\bigg)^{\frac{\alpha+1}{\alpha-5}}  \bigg]^{\frac{4}{\alpha+1}} 
	\notag \\
	& \leq  \frac{C\xi}{\lambda^{\frac{4}{\alpha+1}}} \bigg[  1 + \frac{1}{\lambda^{\frac{20}{9(\alpha+1)}}}  +  \frac{\nu^{4}}{\lambda^{\frac{4\alpha}{(\alpha+1)}}} + \bigg(\frac{\xi^{2}}{\omega \lambda^{\frac{6}{\alpha+1}}}\bigg)^{\frac{4}{\alpha-5}}  \bigg] 
	\notag \\
	& \leq \frac{C\xi}{\lambda^{\frac{4}{7}}} \bigg[1+ \frac{1}{\lambda^{\frac{20}{63}}} + \frac{\nu^4}{\lambda^4}  + \bigg(\frac{\xi^{2}}{\omega \lambda^{\frac{6}{7}}}\bigg)^{\frac{4}{(\alpha-5)}} \bigg] 
	\notag \\
	&= :A_1(\xi, \omega, \lambda). 
\end{align}
thanks to the bound \eqref{uniform-bound-4} and using the fact that 
\begin{align}\label{rel-lambda-alpha6}
	\frac{1}{\lambda^{1/(\alpha+1)}} < \frac{1}{\lambda^{1/7}} \ \text{ since $\alpha>6$ and $0<\lambda \leq 1$} . 
\end{align}

Utilizing the scaling \eqref{scaling}, we conclude
\begin{align}\label{limit-h-1}
	A_1(\xi,\omega, \lambda) =   \frac{C h^{6}}{h^{\frac{4}{7}  }} \bigg[1 + \frac{1}{h^{\frac{20}{63}}} + \frac{h^{12}}{h^4} + 
	\bigg(\frac{h^{12}}{h^3 h^{\frac{6}{7}} }\bigg)^{\frac{4}{\alpha-5}} \bigg]  \leq \frac{Ch^6}{h^{\frac{8}{9}}} .
\end{align}

\vspace*{.1cm}
\noindent 
$\bullet$ 
Secondly, we  compute the following: 
\begin{align}\label{limit-2}
	&\left|\int_0^T \int_{\B \setminus \Omega_t} \Ss_\omega : \nabla_x \boldvphi  \right|   \notag  \\
	\leq& 
	\|\nabla_x\boldvphi\|_{L^\infty((0,T)\times \B)} \left(\int_0^T \int_{\B\setminus \Omega_t} \Big(\frac{1}{\sqrt{\theta}} |\Ss_\omega|\Big)^{\frac{2\alpha+2}{2\alpha+1}}\right)^{\frac{2\alpha+1}{2\alpha+2}} \left( \int_0^T \int_{\B\setminus \Omega_t} \theta^{\alpha +1}\right)^{\frac{1}{2\alpha+2}}    \notag \\
	\leq
	&\frac{C}{\lambda^{\frac{1}{2\alpha+2}}} \bigg( \int_0^T \int_{\B\setminus \Omega_t}\lambda \theta^{\alpha+1} \bigg)^{\frac{1}{2\alpha+2}} 
	\bigg(\int_0^T \int_{\B\setminus \Omega_t} \bigg(\frac{1}{\sqrt{\theta}} \sqrt{|\Ss_\omega: \nabla_x \uvect|}\sqrt{|f_\omega|(1+\theta)}\bigg)^{\frac{2\alpha+2}{2\alpha+1}}\bigg)^{\frac{2\alpha+1}{2\alpha+2}}  \notag \\
	\leq 
	&\frac{C}{\lambda^{\frac{1}{2\alpha+2}}} \bigg( \int_0^T \int_{\B\setminus \Omega_t}\lambda \theta^{\alpha+1} \bigg)^{\frac{1}{2\alpha+2}} \notag \\ 
	& \qquad \qquad \times \bigg(\int_0^T \int_{\B\setminus \Omega_t} \frac{1}{\theta} |\Ss_\omega : \nabla_x \uvect|  \bigg)^{\frac{1}{2}} \bigg( \int_0^T \int_{\B \setminus \Omega_t} \big(f_\omega (1+\theta)\big)^{\frac{\alpha+1}{\alpha}} \bigg)^{\frac{\alpha}{2\alpha+2}}  \notag \\
	\leq 
	&\frac{C}{\lambda^{\frac{1}{2\alpha+2}}} \bigg( \int_0^T \int_{\B\setminus \Omega_t}\lambda \theta^{\alpha+1} \bigg)^{\frac{1}{2\alpha+2}} \times  \bigg(\int_0^T \int_{\B\setminus \Omega_t} \frac{1}{\theta} |\Ss_\omega : \nabla_x \uvect|  \bigg)^{\frac{1}{2}}  \notag \\
	& \quad  \times \|f_\omega\|_{L^{\frac{\alpha+1}{\alpha-1}}(((0,T)\times \B)\setminus Q_T) }^{\frac{1}{2}}
	\Bigg[ 1+ \frac{1}{\lambda^{\frac{1}{2\alpha+2}}} \bigg(\int_0^T \int_{\B\setminus \Omega_t} \lambda \theta^{\alpha+1} \bigg)^{\frac{1}{2\alpha+2}}   \Bigg] 
	\notag \\
	\leq & 
	\frac{C\sqrt{\omega}}{\lambda^{\frac{1}{2\alpha+2}}} \bigg[1+\frac{1}{\lambda^{\frac{5}{9}}} + \frac{\nu^{\alpha+1}}{\lambda^\alpha} + \bigg(\frac{\xi^{2}}{\omega \lambda^{\frac{6}{\alpha+1}}} \bigg)^{\frac{\alpha+1}{\alpha-5}}   \bigg]^{\frac{1}{2\alpha+2}}
	\bigg[1+\frac{1}{\lambda^{\frac{5}{9}}} + \frac{\nu^{\alpha+1}}{\lambda^{\alpha}} +\bigg(\frac{\xi^{2}}{\omega \lambda^{\frac{6}{\alpha+1}}}\bigg)^{\frac{\alpha+1}{\alpha-5}}   \bigg]^{\frac{1}{2}}  \notag \\ 
	& \qquad \qquad \qquad \qquad \qquad \ \ \times  \bigg[  1 +  \frac{1}{\lambda^{\frac{1}{2\alpha+2}}}  \bigg(1+\frac{1}{\lambda^{\frac{5}{9}}} + \frac{\nu^{\alpha+1}}{\lambda^\alpha}+\bigg(\frac{\xi^{2}}{\omega \lambda^{\frac{6}{\alpha+1}}}\bigg)^{\frac{\alpha+1}{(\alpha-5)}}   \bigg)^{\frac{1}{2\alpha+2}} \bigg] 
	\notag \\
	\leq & 
	\frac{C\sqrt{\omega}}{\lambda^{\frac{1}{2\alpha+2}}} 
	\bigg[1+\frac{1}{\lambda^{\frac{5}{18(\alpha+1)}}} + 
	\bigg(\frac{\nu^{\alpha+1}}{\lambda^{\alpha}}\bigg)^{\frac{1}{2(\alpha+1)}}
	+
	\bigg(\frac{\xi^{2}}{\omega \lambda^{\frac{6}{\alpha+1}}} \bigg)^{\frac{1}{2(\alpha-5)}}   \bigg] \notag \\
	& \qquad \qquad \qquad \qquad \qquad \ \ \times \bigg[ 1+ \frac{1}{\lambda^{\frac{5}{18}}} + \bigg(\frac{\nu^{\alpha+1}}{\lambda^\alpha}\bigg)^{\frac{1}{2}}  +\bigg(\frac{\xi^{2}}{\omega \lambda^{\frac{6}{\alpha+1}}  }\bigg)^{\frac{\alpha+1}{2(\alpha-5)}} \bigg] 
	\notag \\
	& + \frac{C\sqrt{\omega}}{\lambda^{\frac{1}{\alpha+1}}} 
	\bigg[1+\frac{1}{\lambda^{\frac{5}{9(\alpha+1)}}} + 
	\bigg(\frac{\nu^{\alpha+1}}{\lambda^{\alpha}}\bigg)^{\frac{1}{\alpha+1}}
	+
	\bigg(\frac{\xi^{2}}{\omega \lambda^{\frac{6}{\alpha+1}}} \bigg)^{\frac{1}{(\alpha-5)}}   \bigg] \notag \\
	& \qquad \qquad \qquad \qquad \qquad \ \ \times \bigg[ 1+ \frac{1}{\lambda^{\frac{5}{18}}} + \bigg(\frac{\nu^{\alpha+1}}{\lambda^\alpha}\bigg)^{\frac{1}{2}}  +\bigg(\frac{\xi^{2}}{\omega \lambda^{\frac{6}{\alpha+1}}  }\bigg)^{\frac{\alpha+1}{2(\alpha-5)}} \bigg]  \notag \\
	=: &  A_2(\xi, \omega, \lambda).
\end{align}
Here, we have utilized that
\begin{align*}
	|\Ss_\omega| \leq  \sqrt{|\Ss_\omega : \nabla_x \uvect| (\mu_\omega(\theta) + \eta_\omega(\theta) )} \leq C \sqrt{|\Ss_\omega : \nabla_x \uvect| |f_\omega| (1+\theta)}, 
\end{align*}
by means of the definitions of $\mu_\omega(\theta), \eta_\omega(\theta)$ given by \eqref{def-mu-omega}--\eqref{def-nu-omega} and  the hypothesis \eqref{hypo-mu}). Then, we have used the bounds \eqref{uniform-bound-4}, \eqref{uniformbound-stress} and the fact $\|f_\omega\|_{L^p(((0,T)\times \B) \setminus Q_T) }\leq c\, \omega$  for $p\geq \frac{\alpha+1}{\alpha-1}$ (see \eqref{def-f-omega}) to deduce the estimate \eqref{limit-2}.

%


Using the scaling \eqref{scaling} in \eqref{limit-2},   we further deduce that
\begin{align}\label{limit-h-2}
	A_2(\xi, \omega, \lambda) 
	&=  
	\frac{Ch^{\frac{3}{2}}}{h^{\frac{1}{2\alpha+2}}} 
	\bigg[1+\frac{1}{h^{\frac{5}{18(\alpha+1)}}} + 
	\bigg(\frac{h^{3(\alpha+1)}}{h^{\alpha}}\bigg)^{\frac{1}{2(\alpha+1)}}
	+
	\bigg(\frac{h^{12}}{ h^3 h^{\frac{6}{\alpha+1}}} \bigg)^{\frac{1}{2(\alpha-5)}}   \bigg] 
	\notag \\
	& \qquad \quad \times \bigg[ 1+ \frac{1}{h^{\frac{5}{18}}} + \bigg(\frac{h^{3(\alpha+1)}}{h^\alpha}\bigg)^{\frac{1}{2}}  +\bigg(\frac{h^{12}}{h^3 h^{\frac{6}{\alpha+1}}  }\bigg)^{\frac{\alpha+1}{2(\alpha-5)}} \bigg] 
	\notag \\
	& \ + \frac{C h^{\frac{3}{2}}}{h^{\frac{1}{\alpha+1}}} 
	\bigg[1+\frac{1}{h^{\frac{5}{9(\alpha+1)}}} + 
	\bigg(\frac{h^{3(\alpha+1}}{h^{\alpha}}\bigg)^{\frac{1}{\alpha+1}}
	+
	\bigg(\frac{h^{12}}{h^3 h^{\frac{6}{\alpha+1}}} \bigg)^{\frac{1}{(\alpha-5)}}   \bigg] 
	\notag \\
	&\qquad \qquad \times \bigg[ 1+ \frac{1}{h^{\frac{5}{18}}} + \bigg(\frac{h^{3(\alpha+1)}}{h^\alpha}\bigg)^{\frac{1}{2}}  +\bigg(\frac{h^{12}}{h^3 h^{\frac{6}{\alpha+1}}  }\bigg)^{\frac{\alpha+1}{2(\alpha-5)}} \bigg] 
	\notag \\
	&\leq \frac{C h^{\frac{3}{2}}}{h^{\frac{14}{9(\alpha+1)} + \frac{5}{18} }}  = C h^{\frac{22\alpha - 6}{18(\alpha+1)}}.
\end{align}

\vspace*{.1cm}
\noindent 
$\bullet$ Next, we look to the entropy inequality \eqref{enropy-ineq-recover}. We observe that 
\begin{align}\label{limit-3}
	&\left|	\int_0^T \int_{\B \setminus \Omega_t} \frac{4a_\xi}{3}\theta^3(\partial_t \vphi +  \uvect\cdot \nabla_x \vphi ) \right| \notag \\
	\leq & \frac{C\xi}{\lambda^{\frac{3}{\alpha+1}}} \bigg(\int_0^T \int_{\B \setminus \Omega_t} \lambda \theta^{\alpha+1} \bigg)^{\frac{3}{\alpha+1}} + C\xi \int_0^T \int_{\B\setminus \Omega_t}|\uvect|^2 + C \xi \int_0^T \int_{\B \setminus \Omega_t} \theta^6 
	\notag \\
	\leq &    \frac{C\xi}{\lambda^{\frac{3}{\alpha+1}}} \bigg(\int_0^T \int_{\B \setminus \Omega_t} \lambda \theta^{\alpha+1} \bigg)^{\frac{3}{\alpha+1}} + C\xi \int_0^T \int_{\B\setminus \Omega_t}|\uvect|^2 + \frac{C \xi}{\lambda^{\frac{6}{\alpha+1}}} \bigg(\int_0^T \int_{\B \setminus \Omega_t} \lambda \theta^{\alpha+1}\bigg)^{\frac{6}{\alpha+1}}          
	\notag  \\
	\leq 
	& 
	\frac{C\xi}{\lambda^{\frac{3}{\alpha+1}}} \bigg[1+\frac{1}{\lambda^{\frac{5}{9}}} +  \frac{\nu^{\alpha+1}}{\lambda^\alpha} + \bigg(\frac{\xi^{2}}{\omega \lambda^{\frac{6}{\alpha+1}}}  \bigg)^{\frac{\alpha+1}{\alpha-5} }  \bigg]^{\frac{3}{\alpha+1}} 
	+\frac{C\xi}{\omega} \bigg[1+\frac{1}{\lambda^{\frac{5}{9}}} +  \frac{\nu^{\alpha+1}}{\lambda^\alpha} + \bigg(\frac{\xi^{2}}{\omega \lambda^{\frac{6}{\alpha+1}}}  \bigg)^{\frac{\alpha+1}{\alpha-5} }  \bigg]  \notag \\
	& \qquad \qquad + \frac{C\xi}{\lambda^{\frac{6}{\alpha+1} } } \bigg[1+\frac{1}{\lambda^{\frac{5}{9}}} +  \frac{\nu^{\alpha+1}}{\lambda^\alpha} + \bigg(\frac{\xi^{2}}{\omega \lambda^{\frac{6}{\alpha+1}}}  \bigg)^{\frac{\alpha+1}{\alpha-5} }  \bigg]^{\frac{6}{\alpha+1}} 
	\notag \\
	\leq & 
	\frac{C\xi}{\lambda^{\frac{3}{\alpha+1}}} \bigg[1+\frac{1}{\lambda^{\frac{15}{9(\alpha+1)}}} +  \frac{\nu^3}{\lambda^{\frac{3\alpha}{\alpha+1}}} + \bigg(\frac{\xi^{2}}{\omega \lambda^{\frac{6}{\alpha+1}}}  \bigg)^{\frac{3}{\alpha-5} }  \bigg] 
	+\frac{C\xi}{\omega} \bigg[1+\frac{1}{\lambda^{\frac{5}{9}}} +  \frac{\nu^{\alpha+1}}{\lambda^\alpha} + \bigg(\frac{\xi^{2}}{\omega \lambda^{\frac{6}{\alpha+1}}}  \bigg)^{\frac{\alpha+1}{\alpha-5} }  \bigg]  
	\notag \\
	&    \qquad \qquad   + \frac{C\xi}{\lambda^{\frac{6}{\alpha+1} } } \bigg[1+\frac{1}{\lambda^{\frac{10}{3(\alpha+1)}}} +  \frac{\nu^{6}}{\lambda^\frac{6\alpha}{\alpha+1}  } + \bigg(\frac{\xi^{2}}{\omega \lambda^{\frac{6}{\alpha+1}}}  \bigg)^{\frac{6}{\alpha-5} }  \bigg]
	\notag \\
	=: & A_3(\xi,\omega, \lambda) ,
\end{align}
thanks to the estimates \eqref{uniform-bound-4}, \eqref{uniform-bound-5} and the fact that $\alpha>6$.

By means of  \eqref{scaling}, we deduce 
\begin{align}\label{limit-h-3}
	A_3(\xi, \omega, \lambda) & = \frac{C h^{6}}{h^{\frac{3}{7}}}  \bigg[1+ \frac{1}{h^{\frac{5}{21}}} + \frac{h^9}{h^{\frac{3\alpha}{\alpha+1}}} + \bigg(\frac{h^{12}}{h^3 h^{\frac{6}{7}}} 
	\bigg)^{\frac{3}{\alpha-5}}  \bigg]   \notag \\   
	& \qquad \qquad + Ch^3 \bigg[1+ \frac{1}{h^{\frac{5}{9}}}  + \frac{h^{3(\alpha+1)}}{h^{\alpha}} +  \bigg(\frac{h^{12}}{h^3 h^{\frac{6}{7}}} 
	\bigg)^{\frac{\alpha+1}{\alpha-5}} \bigg] 
	\notag \\
	& \qquad \qquad + \frac{C h^{6}}{h^{\frac{6}{7}}} \bigg[1 + \frac{1}{h^{\frac{10}{21}}} + \frac{h^{18}}{h^{\frac{6\alpha}{\alpha+1}}} + 
	\bigg(\frac{h^{12}}{h^3 h^{\frac{6}{7}}} 
	\bigg)^{\frac{6}{\alpha-5}}
	\bigg] \notag \\
	& \leq \frac{Ch^{6}}{h^{\frac{14}{21}}} + \frac{Ch^{3}}{h^{\frac{5}{9}}}  + \frac{Ch^{6}}{h^{\frac{28}{21}}} .
\end{align}

\vspace*{.1cm} 
\noindent 
$\bullet$ The next term in the entropy inequality \eqref{enropy-ineq-recover} in $\B\setminus \Omega_t$ satisfies  the following:
\begin{align}\label{limit-4}
	&	\left|\int_0^T \int_{\B \setminus \Omega_t} \frac{\kappa_\nu(\theta,t,x)}{\theta} \nabla_x \theta \cdot \nabla_x \vphi \right| 
	\notag \\
	&	\leq \bigg(\int_0^T \int_{\B \setminus \Omega_t} \frac{\kappa_\nu(\theta,t,x)}{\theta^2} |\nabla_x \theta|^2\bigg)^{\frac{1}{2}} \bigg(\int_0^T \int_{\B \setminus \Omega_t} \kappa_\nu(\theta,t,x) |\nabla_x \vphi|^2 \bigg)^{\frac{1}{2}}
	\notag\\
	& \leq C  \|\nabla_x \vphi\|_{L^\infty((0,T)\times \B)} \left( 1 + \frac{1}{\lambda^{\frac{5}{14}}  } + \Big(\frac{\xi^{2}}{\omega \nu}\Big)^{\frac{\alpha}{2(\alpha-6)}}  \right) \sqrt{\nu} \bigg(\int_0^T \int_{\B\setminus \Omega_t}  (1+ \theta^\alpha)   \bigg)^{\frac{1}{2}} 
	\notag\\
	& \leq C \sqrt{\nu} \bigg[ 1 + \frac{1}{\lambda^{\frac{5}{9}}  } + \frac{\nu^{\alpha+1}}{\lambda^\alpha} 
	+ \bigg(\frac{\xi^{2}}{\omega \lambda^{\frac{6}{\alpha+1}}}\bigg)^{\frac{\alpha+1}{\alpha-5}}  \bigg]^{\frac{1}{2}} \left(1+ \frac{1}{\lambda^{\frac{\alpha}{2(\alpha+1)}}} \Big(\int_0^T \int_{\B} \lambda \theta^{\alpha+1}\Big)^{\frac{\alpha}{2(\alpha+1)} }  \right) 
	\notag\\
	& \leq C \sqrt{\nu} \bigg[ 1 + \frac{1}{\lambda^{\frac{5}{9}}  } + \frac{\nu^{\alpha+1}}{\lambda^\alpha} 
	+ \bigg(\frac{\xi^{2}}{\omega \lambda^{\frac{6}{\alpha+1}}}\bigg)^{\frac{\alpha+1}{\alpha-5}}  \bigg]^{\frac{1}{2}}
	\notag\\
	& \ \ \ 
	+ \frac{C \sqrt{\nu}}{\sqrt{\lambda}}  \bigg[ 1 + \frac{1}{\lambda^{\frac{5}{9}}  } + \frac{\nu^{\alpha+1}}{\lambda^\alpha} 
	+ \bigg(\frac{\xi^{2}}{\omega \lambda^{\frac{6}{\alpha+1}}}\bigg)^{\frac{\alpha+1}{\alpha-5}}  \bigg]^{ \frac{2\alpha+1}{2(\alpha+1)}  }  
	\notag\\
	&  = : A_4(\nu, \xi, \omega, \lambda) . 
\end{align}
We further compute (thanks to the choice \eqref{scaling}) 
\begin{align}\label{limit-h-4}
	A_4(\nu, \xi, \omega, \lambda) &= C h^{\frac{3}{2}} \bigg[1 + \frac{1}{h^{\frac{5}{18}}} + \frac{h^{\frac{3}{2}(\alpha+1)}}{h^{\frac{\alpha}{2}}} + \bigg(\frac{h^{12}}{ h^{3} h^{\frac{6}{\alpha+1}} }\bigg)^{\frac{\alpha+1}{2(\alpha-5)}}   \bigg]  \notag \\ 
	& \ + C h \bigg[ 1 + \frac{1}{h^{\frac{5 (2\alpha+1)}{ 18(\alpha+1)}} } +  \frac{ h^{\frac{3}{2}(2\alpha+1)} }{h^{\frac{1}{2}(2\alpha+1)}}   +   \bigg(\frac{h^{12}}{ h^{3} h^{\frac{6}{\alpha+1}} }\bigg)^{\frac{2\alpha+1}{2(\alpha-5)}}     \bigg]  \notag \\
	&\leq \frac{C h^{\frac{3}{2}}}{h^{\frac{5}{18}}} + \frac{Ch}{h^{\frac{5 (2\alpha+1)}{ 18(\alpha+1)}}}.
\end{align}

\vspace*{.1cm} 
\noindent 
$\bullet$ In a similar way, we can bound the following terms appearing in the energy inequality \eqref{energy-pena-recover}. Indeed,  the estimates \eqref{limit-1} and \eqref{limit-3} yield
\begin{align} \label{limit-5}
	\left|	\int_0^T \int_{\B\setminus\Omega_t} a_\xi\Big(\theta^4 - \frac{4}{3}\theta^3 \widetilde{\theta}\Big) \partial_t \psi \right| 
	+ \left| 	\int_0^T \int_{\B \setminus \Omega_t} \frac{4a_\xi}{3}\theta^3  \Big(\partial_t \widetilde{\theta} + \uvect \cdot \nabla_x \widetilde{\theta}  \Big) \psi  \right| 
	\notag \\
	+ \left| 	\int_0^T \int_{\B \setminus \Omega_t} \frac{a_\xi}{3} \theta^4 \Div_x \V \psi \right| \leq C \left(A_1(\xi, \omega, \lambda) + A_3(\xi, \omega, \lambda)\right)   .
\end{align}
On the other hand, the information \eqref{limit-2} and \eqref{limit-4} give
\begin{align}\label{limit-6}
	\left| \int_0^T \int_{\B \setminus \Omega_t} \Ss_\omega : \nabla_x \V \psi    \right| + 	\left| \int_0^T \int_{\B \setminus \Omega_t} \frac{\kappa_\nu(\theta, t, x)}{\theta} \nabla_x \theta \cdot \nabla_x \widetilde{\theta} \psi   \right| \notag \\
	\leq C \left(A_2(\xi, \omega, \lambda) + A_4(\nu, \xi, \omega, \lambda)\right)  .
\end{align}

\subsubsection{Step 2. Finding suitable estimate for the term $\lambda \theta^{\alpha}$ in the entropy balance}

In this step, we shall find a suitable bound of the term $\disp \int_0^T \int_{\B \setminus \Omega_t} \lambda \theta^{\alpha} \vphi$ appearing in the entropy inequality \eqref{enropy-ineq-recover}. To do that, we need a more precise bound of the term $\|\lambda \theta^\alpha \|_{L^1((0,T)\times \B)}$. 

%

In fact,  we have already computed that the integrals in $\B\setminus \Omega_t$ are ``small"  w.r.t.  all the parameters,   and as per construction there is no parameter involved in the integrals in $\Omega_t$. We shall make use of these information to get a sharper estimate  of $\|\lambda \theta^\alpha \|_{L^1((0,T)\times \B)}$.


Let us first rewrite the  energy inequality \eqref{energy-pena-recover} in the following form  (which can be written by using the similar strategy as we have used to obtain \eqref{energy-pena-3}) :
\begin{align}\label{energy-pena-recover-2}
	&\int_{\Omega_t} \left(\frac{1}{2} \rho |\uvect |^2  + \rho e(\rho, \theta) - \rho s(\rho, \theta)\widetilde{\theta}  
	+ \frac{\delta}{\beta-1} \rho^\beta\right)(\tau, \cdot)
	+ \int_0^\tau \int_\B \lambda \theta^{\alpha+1}   
	\notag \\
	&	+ \int_0^\tau \int_{\Omega_t} \frac{\widetilde{\theta}}{\theta} \left( \Ss : \nabla_x \uvect + \frac{\kappa(\theta,t,x)}{\theta} |\nabla_x \theta|^2     \right) 
	\notag \\	
	\leq &
	\int_{\Omega_0} \left(\frac{1}{2}  \frac{|(\rho\uvect)_{0,\delta}|^2}{\rho_{0,\delta}}  + 	\rho_{0,\delta} e(\rho_{0,\delta}, \theta_{0,\delta}) - \rho_{0,\delta} s(\rho_{0,\delta}, \theta_{0,\delta}) \widetilde{\theta}(0)  + \frac{\delta}{\beta-1} \rho^\beta_{0,\delta}   \right) \notag \\ 
	&  - \int_{\Omega_0} (\rho \uvect)_{0,\delta} \V(0)     +\int_{\B\setminus \Omega_0}  a_\xi \Big(\theta^4_{0,\delta} -  \frac{4}{3}\theta^3_{0,\delta} \widetilde{\theta}(0)  \Big)
	\notag \\	
	& + \int_0^\tau \int_\B \lambda \theta^{\alpha} \widetilde{\theta} + \int_{\Omega_t} (\rho\uvect \cdot \V)(\tau, \cdot) \notag \\ 
	&	+\left| \int_0^\tau  \int_{\Omega_t} \left(\rho[\uvect \otimes \uvect]    : \nabla_x \V  - \Ss : \nabla_x \V  + p_{\delta}(\rho,\theta)\, \Div_x \V   +  \rho \uvect\cdot \partial_t \V \right) \right| 
	\notag \\
	& + \left| 		\int_0^\tau  \int_{\Omega_t} \left[\rho s(\rho,\theta) \Big(\partial_t \widetilde{\theta}  + \uvect \cdot \nabla_x \widetilde{\theta}  \Big) - \frac{\kappa(\theta, t, x)}{\theta} \nabla_x \theta \cdot \nabla_x \widetilde{\theta}  \right] \right|
	\notag\\
	&  + \left|\int_0^\tau \int_{\Omega_t} \rho \nabla_x \Psi \cdot \V   \right|
	+ \left|\int_0^\tau \int_{\B\setminus \Omega_t} \Ss_\omega : \nabla_x \V \right| +  \left|\int_0^\tau  \int_{\B\setminus \Omega_t} \frac{a_\xi}{3} \theta^4 \Div_x \V \right| 
	\notag \\
	& +	\left|\int_0^\tau  \int_{\B \setminus \Omega_t} \frac{4a_\xi}{3}\theta^3  \Big(\partial_t \widetilde{\theta}  + \uvect \cdot \nabla_x \widetilde{\theta}  \Big) \right|
	+ \left|\int_0^\tau  \int_{\B \setminus \Omega_t} \frac{\kappa_\nu(\theta, t, x)}{\theta} \nabla_x \theta \cdot \nabla_x \widetilde{\theta}   \right|   ,
\end{align}
for almost all  $\tau\in (0,T)$.

\vspace*{.15cm}
\noindent 
$\bullet$ Now, the estimations of the terms  $\disp \int_{\Omega_t} (\rho \uvect \cdot \V)(\tau, \cdot)$, $\disp\int_0^\tau \int_{\Omega_t} \rho \nabla_x \Psi \cdot \V$,
$\disp \int_0^\tau \int_{\Omega_t} \rho [\uvect \otimes \uvect]: \nabla_x \V$, $\disp \int_0^\tau \int_{\Omega_t} \rho \uvect\cdot \partial_t \V$ and   $\disp \int_0^\tau \int_\B \lambda \theta^{\alpha} \widetilde{\theta}$ can be done in a similar fashion  as previous; see \eqref{esti-1}, \eqref{esti-nabla-Psi-V},  \eqref{esti-3}, \eqref{esti-4} and \eqref{esti-5} respectively (since all of those estimates are uniform w.r.t.  $\lambda$).

\vspace*{.15cm}
\noindent 
$\bullet$  We also recall that all the terms in fluid domain $Q_T$ is independent of the parameters $\omega, \xi, \nu$.  In what follows, let us estimate 
\begin{align}\label{esti-final-1} 
	\int_0^\tau \int_{\Omega_t} \Ss : \nabla_x \V 
	&\leq   \frac{1}{2} \int_0^\tau \int_{\Omega_t} \frac{\widetilde{\theta}}{\theta} \Ss : \nabla_x \uvect + C(\V, \widetilde{\theta}) \int_0^\tau \int_{\Omega_t} \theta \notag  \\
	& \leq \frac{1}{2} \int_0^\tau \int_{\Omega_t}  \frac{\widetilde{\theta}}{\theta} \Ss : \nabla_x \uvect + 
	C(\V,a,\widetilde{\theta}) \bigg(1+\int_0^\tau \int_{\Omega_t} a \theta^4 \bigg)  \notag \\   
	& \leq \frac{1}{2} \int_0^\tau \int_{\Omega_t}  \frac{\widetilde{\theta}}{\theta} \Ss : \nabla_x \uvect + 
	C(\V,a, \widetilde{\theta})\bigg(1+ \int_0^\tau \int_{\Omega_t} \rho e (\rho, \theta) \bigg),
\end{align}
since $a \theta^4  \leq \rho e(\rho, \theta)$. Note that, the term $\disp \frac{1}{2} \int_0^\tau \int_{\Omega_t}  \frac{\widetilde{\theta}}{\theta} \Ss : \nabla_x \uvect$ 
can be absorbed by the associated term in the l.h.s. of \eqref{energy-pena-recover-2}.

\vspace*{.15cm}
\noindent
$\bullet$   At this step, we first recall the choice of test function $\widetilde{\theta}\in \C^1([0,T]\times \overline{\B})$ 
from \eqref{choice-tilde-theta-B}--\eqref{boundary-new-theta-B}, Section \ref{Section-penalized-wk}.

In what follows, we have 
\begin{equation*} 
	\begin{aligned}
		\int_0^\tau \int_{\Omega_t} \frac{\kappa(\theta)}{\theta} \nabla_x \theta \cdot \nabla_x \widetilde \theta 
		&= \int_0^\tau \int_{\Omega_t}  \nabla_x K(\theta) \cdot \nabla_x \widetilde \theta  \\
		& = -\int_0^\tau \int_{\Omega_t}  K(\theta) \cdot \Delta_x \widetilde \theta  + \int_0^\tau \int_{\Gamma_t} K(\theta_B) \nabla_x \theta_B \cdot \mathbf n ,
	\end{aligned}
\end{equation*} 
where $\frac{\partial}{\partial \theta}K(\theta) = \frac{\kappa(\theta)}{\theta}$.   But thanks to \eqref{test-function-theta_B}, we have $\Delta_x \widetilde \theta =0$ in $\Omega_t$ and $\widetilde \theta |_{\Gamma_t}=\theta_B$ for each $t\in [0,T]$,  and as a consequence, we get 
\begin{equation}
	\bigg| \int_0^\tau \int_{\Omega_t} \frac{\kappa(\theta)}{\theta} \nabla_x \theta \cdot \nabla_x \widetilde \theta   \bigg| \leq C(\theta_B).
\end{equation}

\vspace*{.15cm}
\noindent 
$\bullet$ Let us now recall the pressure term 
$\disp p_\delta(\rho, \theta) = p_{M}(\rho, \theta) + \frac{a}{3}\theta^4 + \delta \rho^\beta$ and the bound of $p_M$ given by \eqref{mole_bound_p_M}. To this end,  we find  
\begin{align} \label{estimate-pressue-final}
	\left|\int_0^\tau \int_{\Omega_t}  p_{\xi, \delta} (\rho, \theta) \Div_x \V \right| 
	&	\leq C(\V) \int_0^\tau \int_{\Omega_t} \bigg(\frac{\delta}{\beta-1} \rho^\beta + a \theta^4 + \rho^{\frac{5}{3}} + \theta^{\frac{5}{2}} \bigg) \notag \\
	& \leq C(\V, p_\infty, a) \int_0^\tau\int_{\Omega_t} \bigg(\frac{\delta}{\beta-1} \rho^\beta + \rho e(\rho, \theta) + 1 \bigg). 
\end{align}

\vspace*{.15cm}
\noindent 
$\bullet$ Next, using  the bounds \eqref{Fact-2}--\eqref{Fact-3},  we deduce that
\begin{align}\label{estimate-entrpy-term-final}
	&\int_0^\tau \int_{\Omega_t} \rho s(\rho, \theta)|\uvect| \notag \\
	& \leq   \int_0^\tau \int_{\Omega_t} \rho |\uvect| + C \int_0^\tau \int_{\Omega_t} \rho |\uvect| [\log \theta]^{+} + C \int_0^\tau a \theta^3 |\uvect|  \notag  \\
	&\leq  C(\rho_0) + C\int_0^\tau \int_{\Omega_t} \rho |\uvect|^2 + \frac{C}{\epsilon} \int_0^\tau \int_{\Omega_t} \rho^2 ([\log \theta]^+)^2   + 2\epsilon \int_0^\tau \int_{\Omega_t} |\uvect|^2 
	\notag \\ 
	& \hspace{7cm} +  \frac{C(a)}{\epsilon} \int_0^\tau \int_{\Omega_t} \theta^6 
	\notag \\
	&\leq C(\rho_0) + C\int_0^\tau \int_{\Omega_t} \rho |\uvect|^2 +  \frac{C(a)}{ \epsilon} \int_0^\tau \int_{\Omega_t}  a \theta^4 
	+ 2\epsilon \int_0^\tau  \|\uvect\|^2_{W^{1,2}(\Omega_t;\mathbb R^3)}  \notag \\
	& \hspace{7cm} + \frac{C(a)}{\epsilon} \int_0^\tau \|\theta^3 \|^2_{L^2(\Omega_t)}  ,
\end{align}
using the fact that $\rho\leq \theta^{3/2}$ and $[\log \theta]^+ \leq \theta^{1/2}$  in the integral  $\int_0^\tau \int_{\Omega_t} \rho^2 ([\log \theta]^+)^2$. Also, we shall  use that $a\theta^4 \leq \rho e(\rho, \theta)$ in the above estimate.

Furthermore, 
\begin{align}\label{new-term-1}
	\| \theta^3 \|^2_{L^2(\Omega_t)} = \int_{\theta \leq \widehat{K} } \theta^6 + \int_{\theta > \widehat{K}} \theta^6 
	& \leq |\Omega_t| \widehat{K}^6 +   \widehat{K}^{6-\alpha} \int_{\Omega_t} \theta^\alpha \notag \\
	& \leq |\B| \widehat{K}^6 +   \widehat{K}^{6-\alpha} \int_{\Omega_t} \theta^\alpha. 
\end{align}
Then, by using H\"{o}lder and Poincar\'{e} inequalities, we have 
\begin{align}\label{new-term-2}
	\int_{\Omega_t} \theta^\alpha \lesssim \|\theta^\alpha\|_{L^3(\Omega_t)} = 
	\|\theta^{\frac{\alpha}{2}}\|^2_{L^6(\Omega_t)} \lesssim \|\theta^{\frac{\alpha}{2}}\|^2_{W^{1,2}(\Omega_t)} &\lesssim \|\nabla_x \theta^{\frac{\alpha}{2}}\|^2_{L^{2}(\Omega_t)} + C(\theta_B) \notag \\
	& \lesssim \int_{\Omega_t} \theta^{\alpha-2} |\nabla_x \theta|^2 + C(\theta_B)  .
\end{align}

Now, as we described in \eqref{lower-bound-S-w-u}--\eqref{u_W_12}, one has 
\begin{align}\label{esti-uvect-final}
	\int_0^\tau \| \uvect \|^2_{W^{1,2}(\Omega_t;\mathbb R^3)} \leq C(\widetilde \theta) \int_0^\tau \int_{\Omega_t} \frac{\widetilde \theta}{\theta} \Ss : \nabla_x \uvect + C(\rho_0, \widetilde \theta) \int_0^\tau \int_{\Omega_t} \rho |\uvect|^2 .
\end{align}
Moreover, 
\begin{align}\label{new-term-4}
	\int_0^\tau \int_{\Omega_t} \widetilde{\theta} \,  \kappa(\theta, t, x) \frac{|\nabla_x \theta|^2}{|\theta|^2} \gtrsim   \inf_{\overline{(0,T)\times \B}} |\widetilde{\theta}|   \int_0^\tau \int_{\Omega_t} \left(\frac{1}{\theta^2} + \theta^{\alpha-2}  \right)|\nabla_x \theta|^2 .
\end{align} 

To this end, we first fix $\epsilon>0$ small enough  in \eqref{estimate-entrpy-term-final} and since $\alpha>6$, we  choose $\widehat K >0$ large enough in \eqref{new-term-1}, so that, by means of \eqref{new-term-2} and \eqref{esti-uvect-final}--\eqref{new-term-4}, the terms  $\disp \int_0^\tau \|\uvect\|^2_{W^{1,2}(\Omega_t;\mathbb R^3)}$ and $\disp \int_0^\tau \|\theta^3\|^2_{L^2(\Omega_t)}$ can be dominated by the left hand side of \eqref{energy-pena-recover-2}.

\vspace*{.15cm}
\noindent
$\bullet$ Further,  one can  deduce that 
\begin{align}\label{esti-initial}
	\left| \int_{\B\setminus \Omega_0} a_\xi \Big(\theta^4_{0,\delta} - \frac{4}{3} \theta^3_{0,\delta}\widetilde \theta(0)  \Big) \right| \leq C(\theta_0, \widetilde \theta) \xi ,
\end{align}
for fixed $0<\delta<1$.

\vspace*{.15cm}
\noindent
$\bullet$
Using all the above estimates and the bounds of the terms in $\B\setminus \Omega_t$ from \eqref{limit-5}--\eqref{limit-6}, we have from \eqref{energy-pena-recover-2} (by applying Gr\"onwall's lemma)
\begin{align}\label{energy-pena-recover-3}
	&\int_{\Omega_t} \left(\frac{1}{2} \rho |\uvect |^2  + \rho e(\rho, \theta) - \rho s(\rho, \theta) \widetilde \theta  
	+ \frac{\delta}{\beta-1} \rho^\beta\right)(\tau, \cdot)
	+ \int_0^\tau \int_\B \lambda \theta^{\alpha+1}   \notag \\
	&	+ \int_0^\tau \|\uvect \|^2_{W^{1,2}(\Omega_t)} + \int_0^\tau \int_{\Omega_t} \widetilde \theta \, \frac{\kappa(\theta,t,x)}{\theta^2}  |\nabla_x \theta|^2 
	\notag \\	
	\leq &
	C\int_{\Omega_0} \bigg(\frac{1}{2}  \frac{|(\rho\uvect)_{0,\delta}|^2}{\rho_{0,\delta}}  + 	\rho_{0,\delta} e(\rho_{0,\delta}, \theta_{0,\delta}) - \rho_{0,\delta} s(\rho_{0,\delta}, \theta_{0,\delta}) \widetilde \theta(0)  + \frac{\delta}{\beta-1} \rho^\beta_{0,\delta} \bigg) \notag \\ 
	& - C\int_{\Omega_0} (\rho \uvect)_{0,\delta} \V(0)  + C
	+ C(\widetilde \theta, \theta_0) \xi \notag \\ 
	& + C \left( A_1(\xi, \omega, \lambda)  +  A_2(\xi, \omega, \lambda) +  A_3(\xi, \omega, \lambda) +  A_4(\nu, \xi, \omega, \lambda)  \right) ,
\end{align}
for almost all  $\tau\in (0,T)$,  where the constant $C>0$ does not depend on any of the parameters $\lambda, \omega, \nu, \xi$.

\paragraph{Bounds of the terms $\lambda \theta^{\alpha}$.} 
Let us recall the entropy balance \eqref{enropy-ineq-recover}. The only term left to estimate is the integral concerning $\lambda \theta^{\alpha}$. Indeed, we have 
\begin{align}\label{bound-lambda-theta-4}
	&\left|\int_0^T \int_\B \lambda \theta^{\alpha} \vphi \right| \notag \\
	&\leq  \lambda^{\frac{1}{\alpha+1}} \|\vphi\|_{L^\infty((0,T)\times \B)}\bigg(\int_0^T \int_\B \lambda \theta^{\alpha+1} \bigg)^{\frac{\alpha}{\alpha+1}} \notag \\
	&\leq C \lambda^{\frac{1}{\alpha+1}} \Big(1+ \xi +  A_1(\xi, \omega, \lambda)  +  A_2(\xi, \omega, \lambda) +  A_3(\xi, \omega, \lambda) +  A_4(\nu, \xi, \omega, \lambda) \Big)^{\frac{\alpha}{\alpha+1}},
\end{align} 
where we have used  the estimate \eqref{energy-pena-recover-3}, and it is clear that  constant $C>0$ is independent on  the parameters $\lambda, \xi, \omega, \nu, \delta$.

\subsubsection{Passing to the limits of $\omega,\xi, \nu, \lambda$}
Now, we are in position to pass to the limits of all the parameters $\omega$, $\xi$, $\nu$, $\lambda$ together.  
We keep in mind the scaling introduced in \eqref{scaling} w.r.t. $h$. Then,  recall the points \eqref{limit-1}--\eqref{limit-h-1},  \eqref{limit-2}--\eqref{limit-h-2}, \eqref{limit-3}--\eqref{limit-h-3}, \eqref{limit-4}--\eqref{limit-h-4}, from which it is not difficult to observe that 
\begin{align}\label{limits-all}
	\begin{dcases}
		A_1(\xi,\omega, \lambda) \leq  \frac{Ch^6}{h^{\frac{8}{9}}}
		\to 0 \text{ as } h\to 0, \\
		A_2(\xi, \omega, \lambda) \leq Ch^{\frac{22\alpha-6}{18(\alpha+1)}} \to 0 \ \ \text{ as } h\to 0, \\
		A_3(\xi, \omega, \lambda) \leq \frac{Ch^6}{h^{\frac{14}{21}}} + \frac{Ch^3}{h^{\frac{5}{9}}} + \frac{Ch^6}{h^{\frac{28}{21}}} 
		\to 0  \ \ \text{ as } h\to 0, 
		\\
		A_4(\nu, \xi, \omega, \lambda) \leq  \frac{C h^{\frac{3}{2}}}{h^{\frac{5}{18}}} + \frac{Ch}{h^{\frac{5 (2\alpha+1)}{ 18(\alpha+1)}}}
		\to 0 \ \  \text{ as } h \to 0  .
	\end{dcases}
\end{align} 

Next, from the estimate \eqref{limit-5}, one has 
\begin{align*}
	\begin{dcases} 
		\int_0^T \int_{\B\setminus\Omega_t} a_\xi\Big(\theta^4 - \frac{4}{3}\theta^3 \widetilde \theta\Big) \partial_t \psi \to 0 \ \ \text{ as } h \to 0, \\ 
		\int_0^T \int_{\B \setminus \Omega_t} \frac{4a_\xi}{3}\theta^3  \Big(\partial_t \widetilde \theta  + \uvect \cdot \nabla_x \widetilde \theta  \Big) \psi  \to 0 \ \ \text{ as } h\to 0, \\
		\int_0^T \int_{\B \setminus \Omega_t} \frac{a_\xi}{3} \theta^4 \Div_x \V \psi \to 0 \ \ \text{ as } h\to 0,
	\end{dcases} 
\end{align*}
and analogously, from \eqref{limit-6} we have 
\begin{align*}
	\begin{dcases} 
		\int_0^T \int_{\B \setminus \Omega_t} \Ss_\omega : \nabla_x \V \psi  \to 0  \ \ \text{ as } h \to 0, \\
		\int_0^T \int_{\B \setminus \Omega_t} \frac{\kappa_\nu(\theta, t, x)}{\theta} \nabla_x \theta \cdot \nabla_x \widetilde \theta \, \psi \to 0 \ \ \text{ as } h \to 0,
	\end{dcases} 
\end{align*}
in the energy inequality  \eqref{energy-pena-recover}. 

Also, from \eqref{esti-initial}, one has (in the energy inequality \eqref{energy-pena-recover})
\begin{align*}
	\int_{\B\setminus \Omega_0} a_\xi \Big(\theta^4_{0,\delta} - \frac{4}{3} \theta^3_{0,\delta} \widetilde \theta (0)  \Big) \to 0 \ \ \text{ as } h \to 0.
\end{align*}
We can also show that
\begin{align*}
	\int_{\B\setminus \Omega_0} \frac{4a_\xi}{3} \theta^3_{0,\delta} \vphi(0, \cdot) \to 0 \ \ \text{ as } h \to 0,
\end{align*}
in the entropy inequality \eqref{enropy-ineq-recover}.

Now, thanks to the estimate \eqref{bound-lambda-theta-4}, and the  limiting behaviors of $A_1$, $A_2$, $A_3$ and $A_4$ from \eqref{limits-all},
we have 
\begin{align}\label{limit-h-5}
	\int_0^T \int_\B \lambda \theta^{\alpha} \vphi \to 0 \quad \text{as } h\to 0 ,
\end{align}
in the entropy inequality \eqref{enropy-ineq-recover}.
Similarly, the term  $\displaystyle \int_0^T \int_{\B} \lambda\theta^{\alpha} \widetilde \theta \psi$ in the energy estimate \eqref{energy-pena-recover} vanishes as $h\to 0$.

\subsubsection{Resultant weak formulations.}
Taking into account all the above limits  of $\xi$, $\omega$, $\nu$, $\lambda$ to $0$ (equivalently $h\to 0$) from the previous subsection,   we now  write the weak formulations and the ballistic energy inequality for our system.

\vspace*{.1cm}
\noindent 
$\bullet$  The  weak formulation of the continuity equation will be the same as \eqref{weak-conti-recover} after passing to the limits.

\vspace*{.1cm}
\noindent 
$\bullet$
The weak formulation of the momentum equation becomes (from \eqref{weak-momen-recover}) 
\begin{align}\label{weak-momen-recover-2}
	-	\int_0^T \int_{\Omega_t} \left( \rho \uvect \cdot \partial_t  \boldvphi   +  \rho[\uvect \otimes \uvect] : \nabla_x  \boldvphi + p_{\delta}(\rho, \theta) \Div_x \boldvphi   \right)   
	+\int_0^T \int_{\Omega_t} \Ss : \nabla_x \boldvphi \notag  \\
	= \int_0^T \int_{\Omega_t} \rho \nabla_x \Psi \cdot \boldvphi +
	\int_{\Omega_0}   (\rho \uvect)_{0,\delta}\cdot  \boldvphi(0, \cdot) 
\end{align}
for any test function $\boldvphi$ satisfying 
\begin{align*}
	\boldvphi \in \C^\infty_c([0,T]\times \B;\mathbb R^3) \quad \text{with} \quad \boldvphi(\tau, \cdot) \cdot \mathbf n \big|_{\Gamma_\tau} =0 \ \ \text{for any } \tau\in [0,T].
\end{align*}

\vspace*{.1cm}
\noindent 
$\bullet$ The weak formulation for the Poisson equation has  the same expression as  \eqref{pena-weak-poisson-recover} after the limiting process. So, we do not write it again.

\vspace*{.1cm}
\noindent 
$\bullet$ The entropy inequality can be written as (from \eqref{enropy-ineq-recover}) 
\begin{align}\label{entropy-ineq-recover-2}
	&	-\int_0^T \int_{\Omega_t} \rho s(\rho,\theta) \left(\partial_t \vphi + \
	\uvect \cdot \nabla_x \vphi \right) 	
	+ \int_0^T \int_{\Omega_t} \frac{\kappa(\theta, t,x)}{\theta} \nabla_x \theta \cdot \nabla_x \vphi 
	\notag \\
	& \ 	- \int_{\Omega_0} \rho_{0,\delta} s(\rho_{0,\delta}, \theta_{0,\delta}) \vphi(0,\cdot) 
	\geq \int_0^T \int_{\Omega_t} \frac{\vphi}{\theta}  \left( \Ss : \nabla_x \uvect 
	+ \frac{\kappa(\theta, t,x)}{\theta} |\nabla_x \theta|^2   \right),
\end{align}
for any test function $\vphi \in \C^1_c([0,T)\times \B)$ with $\vphi\geq 0$.

\vspace*{.1cm}
\noindent 
$\bullet$  
The ballistic energy inequality \eqref{energy-pena-recover} now reduces to the following:
\begin{align}\label{energy-pena-recover-4}
	& -	\int_0^T\int_{\Omega_t} \partial_t \psi\left(\frac{1}{2} \rho |\uvect |^2  + \rho e(\rho, \theta) - \rho s(\rho, \theta)\widetilde \theta  
	+ \frac{\delta}{\beta-1} \rho^\beta\right)
	\notag \\
	&   
	+ \int_0^T \int_{\Omega_t} \frac{\widetilde \theta}{\theta} \left( \Ss : \nabla_x \uvect  + \frac{\kappa(\theta,t,x)}{\theta} |\nabla_x \theta|^2     \right) \psi 
	\notag \\	
	\leq  &
	\int_{\Omega_0} \psi(0)\left(\frac{1}{2}  \frac{|(\rho\uvect)_{0,\delta}|^2}{\rho_{0,\delta}}  + 	\rho_{0,\delta} e(\rho_{0,\delta}, \theta_{0,\delta}) - \rho_{0,\delta} s(\rho_{0,\delta}, \theta_{0,\delta})\widetilde \theta(0)  + \frac{\delta}{\beta-1} \rho^\beta_{0,\delta}  \right)
	\notag \\  
	&  - \psi(0) \int_{\Omega_0} (\rho \uvect)_{0,\delta} \V(0)	
	-\int_0^T \int_{\Omega_t} \rho \uvect \cdot \partial_t (\V  \psi)
	\notag \\
	& - \int_0^T  \int_{\Omega_t} \left(\rho[\uvect \otimes \uvect]    : \nabla_x \V  - \Ss : \nabla_x \V  + p_{ \delta}(\rho,\theta)\, \Div_x \V  \right) \psi 	   
	\notag \\
	&
	-	\int_0^T \int_{\Omega_t} \left[\rho s(\rho,\theta) \Big(\partial_t \widetilde \theta  + \uvect \cdot \nabla_x \widetilde \theta  \Big) - \frac{\kappa(\theta, t, x)}{\theta} \nabla_x \theta \cdot \nabla_x \widetilde \theta   \right] \psi  \notag \\
	& - \int_0^T \psi \int_{\Omega_t} \rho \nabla_x \Psi \cdot \V ,
\end{align}
for all $\psi \in \C^1_c([0,T))$ with $\psi\geq 0$ and $\partial_t \psi \leq 0$, where the choice of $\widetilde \theta$ has been made in \eqref{choice-tilde-theta-B}--\eqref{test-function-theta_B}.  

In above, we simply omit the term $\disp \Big(-\int_0^T \int_\B \lambda \theta^{\alpha+1} \psi\Big)$ from the l.h.s. of the energy inequality since this term is non-positive.

\subsection{Conclusion of the proof: vanishing artificial pressure}
In the final step, we proceed with $\delta\to 0$  in a similar way as developed in \cite{Feireisl-NSF-1}; see also the book \cite{Feireisl-Novotny-book}. We skip the details in this paper since the arguments are by now well-understood.

\appendix

\section{Some auxiliary lemmas}

\begin{lemma}[Generalized Poincar\'e inequality]\label{Poincare}
	Let $1\leq p \leq \infty$, $0<\gamma<\infty$, $U_0>0$ and $\Omega\subset \mathbb R^N$ be a bounded Lipschitz domain. 
	Then, there exists a positive constant $C=C(p, \gamma, U_0)$ such that 
	\begin{align}
		\|v \|_{W^{1,p}(\Omega)} \leq C \left[\| \nabla_x v\|_{L^p(\Omega, \mathbb R^N)} + \Big( \int_U |v|^\gamma \Big)^{\frac{1}{\gamma}}  \right] 
	\end{align}
	for any measurable $U\subset \Omega$, $|U|\geq U_0$ and any $v\in W^{1,p}(\Omega)$. 
\end{lemma}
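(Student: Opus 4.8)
The plan is to argue by contradiction, exploiting the compactness of the embedding $W^{1,p}(\Omega)\hookrightarrow\hookrightarrow L^p(\Omega)$ on bounded Lipschitz domains. Suppose the asserted inequality holds for no constant. Then for every $n\in\mathbb N$ there are $v_n\in W^{1,p}(\Omega)$ and a measurable $U_n\subset\Omega$ with $|U_n|\ge U_0$ such that
\[
\|v_n\|_{W^{1,p}(\Omega)} \;>\; n\Big[\,\|\nabla_x v_n\|_{L^p(\Omega;\mathbb R^N)} + \Big(\textstyle\int_{U_n}|v_n|^\gamma\Big)^{1/\gamma}\,\Big].
\]
Since the inequality is positively homogeneous, I would normalise $\|v_n\|_{W^{1,p}(\Omega)}=1$; the displayed bound then forces $\|\nabla_x v_n\|_{L^p(\Omega)}\to 0$ and $\int_{U_n}|v_n|^\gamma\to 0$ as $n\to\infty$.

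Next I would extract a convergent subsequence. For $1\le p<\infty$, Rellich--Kondrachov on a bounded Lipschitz domain gives, along a subsequence, $v_n\to v$ in $L^p(\Omega)$; for $p=\infty$ one uses instead that bounded subsets of $W^{1,\infty}(\Omega)$ are precompact in $C(\overline\Omega)$ by Arzel\`a--Ascoli, hence in $L^\infty(\Omega)$, giving again $v_n\to v$ in $L^p(\Omega)$. Because $\nabla_x v_n\to 0$ in $L^p$, the distributional gradient of the limit vanishes, so $v\equiv c$ is constant on $\Omega$ (here connectedness of the domain is used). Moreover $\|v_n-v\|_{W^{1,p}(\Omega)}^p=\|v_n-v\|_{L^p}^p+\|\nabla_x v_n\|_{L^p}^p\to 0$ (with the evident modification when $p=\infty$), so $\|v\|_{W^{1,p}(\Omega)}=\lim_n\|v_n\|_{W^{1,p}(\Omega)}=1$; in particular $c\neq 0$.

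The contradiction then comes from a lower bound for $\int_{U_n}|v_n|^\gamma$ that is \emph{uniform in the sets} $U_n$. Since $v_n\to c$ in $L^p(\Omega)$ it converges to $c$ in measure, hence $|E_n|\to 0$ for $E_n:=\{x\in\Omega:\ |v_n(x)-c|\ge |c|/2\}$. On $\Omega\setminus E_n$ one has $|v_n|\ge |c|/2$, and $|U_n\setminus E_n|\ge |U_n|-|E_n|\ge U_0/2$ for $n$ large, so
\[
\int_{U_n}|v_n|^\gamma \;\ge\; \int_{U_n\setminus E_n}|v_n|^\gamma \;\ge\; \Big(\tfrac{|c|}{2}\Big)^{\gamma}|U_n\setminus E_n| \;\ge\; \Big(\tfrac{|c|}{2}\Big)^{\gamma}\tfrac{U_0}{2} \;>\;0,
\]
contradicting $\int_{U_n}|v_n|^\gamma\to 0$. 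This completes the argument, and by inspection the resulting constant depends only on $p$, $\gamma$, $U_0$ (and $\Omega$).

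I expect the only genuinely delicate point to be enforcing uniformity over all admissible $U$. A naive attempt to pass to the limit directly inside $\int_{U_n}|v_n|^\gamma$ would require controlling $|v_n|^\gamma$ in $L^1(\Omega)$, which is awkward when $\gamma>p$ since the $W^{1,p}$-bound does not a priori place $v_n$ in $L^\gamma$; the convergence-in-measure step above bypasses this entirely and is the crux of the proof. The case $p=\infty$ needs only the cosmetic replacement of the compactness tool noted above, and the case of disconnected $\Omega$ (if one wished to allow it) would be handled componentwise after discarding components of measure $<U_0/\#\text{components}$.
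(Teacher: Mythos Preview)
Your proof is correct. The paper does not supply its own argument for this lemma; it merely cites \cite[Theorem 11.20, Chapter 11.9]{Feireisl-Novotny-book}. Your contradiction-plus-compactness argument is precisely the standard one given in that reference: normalise, use Rellich--Kondrachov to extract a limit, identify it as a nonzero constant, and derive a contradiction from the lower bound on $|U_n|$. The one point worth flagging is that connectedness of $\Omega$ is used (to conclude the limit is a single constant); this is implicit in the word ``domain'' in both the paper and the reference, and you already noted it. Your convergence-in-measure trick to handle varying $U_n$ without needing $v_n\in L^\gamma$ is exactly the right way to make the argument robust for all $0<\gamma<\infty$.
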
 
A formal proof of the above result is given in \cite[Theorem 11.20, Chapter 11.9]{Feireisl-Novotny-book}.

\begin{lemma}[Korn-Poincar\'e inequality]\label{Korn-Poincare}
	Let $\Omega \subset \mathbb R^3$ be a bounded Lipschitz domain. Assume that $r$ is a
	non-negative function such that 
	\begin{align*}
		0< m_0 \leq \int_\Omega r dx , \quad \int_\Omega r^\gamma \leq K,
	\end{align*}
	for some certain $\gamma>1$. Then 
	\begin{align}
		\| v\|_{W^{1,p}(\Omega, \mathbb R^3)} \leq C(p, m_0, K) \left(\left\| \nabla_x  v + \nabla_x^t v -\frac{2}{3} \Div_x v \mathbb I   \right\|_{L^p(\Omega, \mathbb R^3)}  + \int_\Omega r |v| \right) 
	\end{align}
	for any $v\in W^{1,p}(\Omega, \mathbb R^3)$ and $1<p<\infty$.
\end{lemma}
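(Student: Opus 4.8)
The plan is to prove the statement by a compactness--contradiction argument built on top of the classical deviatoric Korn inequality. First I would recall that, since $\Omega\subset\mathbb R^3$ is a bounded Lipschitz domain with $N=3\geq 3$ and $1<p<\infty$, the trace-free symmetric gradient controls the full gradient up to a lower-order term:
\begin{align*}
  \|\nabla_x v\|_{L^p(\Omega)} \leq C\Big(\big\|\nabla_x v + \nabla_x^t v - \tfrac{2}{3}\Div_x v\,\mathbb I\big\|_{L^p(\Omega)} + \|v\|_{L^p(\Omega)}\Big),
\end{align*}
valid precisely because the dimension is at least three (the deviatoric operator is then still elliptic, so this follows from Calder\'on--Zygmund theory for the symmetric gradient; see the standard references on Korn's inequality). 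Writing $\mathcal D v:=\nabla_x v+\nabla_x^t v-\tfrac23\Div_x v\,\mathbb I$, this together with the ordinary Poincar\'e inequality already gives $\|v\|_{W^{1,p}(\Omega)}\leq C(\|\mathcal D v\|_{L^p(\Omega)}+\|v\|_{L^p(\Omega)})$. It therefore remains to absorb the term $\|v\|_{L^p(\Omega)}$ into $\|\mathcal D v\|_{L^p(\Omega)}+\int_\Omega r|v|$ with a constant depending only on $p$, $m_0$ and $K$; here I may assume $\Omega$ connected, the general case following componentwise.

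Suppose the asserted estimate fails. Then there exist $r_n\geq 0$ with $\int_\Omega r_n\geq m_0$ and $\int_\Omega r_n^\gamma\leq K$, and $v_n\in W^{1,p}(\Omega;\mathbb R^3)$ with $\|v_n\|_{W^{1,p}(\Omega)}=1$ but $\|\mathcal D v_n\|_{L^p(\Omega)}+\int_\Omega r_n|v_n|\to 0$. By reflexivity and the Rellich--Kondrachov theorem, after extracting a subsequence, $v_n\rightharpoonup v$ weakly in $W^{1,p}$, $v_n\to v$ strongly in $L^p$ and in $L^{\gamma'}$, and $r_n\rightharpoonup r$ weakly in $L^\gamma$. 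Applying the Korn inequality to $v_n-v_m$ yields $\|\nabla_x(v_n-v_m)\|_{L^p}\leq C(\|\mathcal D(v_n-v_m)\|_{L^p}+\|v_n-v_m\|_{L^p})\to 0$, so $\{v_n\}$ is Cauchy in $W^{1,p}$; hence $v_n\to v$ strongly in $W^{1,p}(\Omega)$, giving $\|v\|_{W^{1,p}(\Omega)}=1$ and $\mathcal D v=0$ a.e. in $\Omega$. Moreover $\int_\Omega r_n|v_n|\to\int_\Omega r|v|$ (a weakly-$L^\gamma$ convergent times a strongly-$L^{\gamma'}$ convergent sequence) and $\int_\Omega r\geq\liminf_n\int_\Omega r_n\geq m_0$, so we reach $\int_\Omega r|v|=0$ with $r\geq 0$ and $\int_\Omega r\geq m_0>0$.

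It remains to exploit $\mathcal D v=0$. On the connected set $\Omega$ the equation $\mathcal D v=0$ is the conformal Killing equation in $\mathbb R^3$; by ellipticity $v$ is smooth, and its solutions are the restrictions of the finite-dimensional space of conformal Killing fields, which are polynomial vector fields of degree $\leq 2$ (translations, rotations, the dilation and the special conformal fields). In particular $v$ is real-analytic. Since $\int_\Omega r|v|=0$ with $r\geq 0$ and $\int_\Omega r>0$, the set $\{r>0\}$ has positive Lebesgue measure and $v$ vanishes there; a nonzero polynomial vector field cannot vanish on a set of positive measure, hence $v\equiv 0$ on $\Omega$, contradicting $\|v\|_{W^{1,p}(\Omega)}=1$. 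I expect the main obstacle to be this last step, namely pinning down $\ker\mathcal D$ and invoking the unique-continuation property of its elements, combined with the care needed to keep the constant uniform in $r$ --- which is exactly why the contradiction is run with a varying sequence $r_n$ rather than with a fixed weight.
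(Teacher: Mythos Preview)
The paper does not prove this lemma; it simply refers to Theorem~11.22, Chapter~11.10 of Feireisl--Novotn\'y's monograph. Your compactness--contradiction argument is essentially the standard proof found there: one first invokes the trace-free Korn inequality to bound $\|v\|_{W^{1,p}}$ by $\|\mathcal D v\|_{L^p}+\|v\|_{L^p}$, and then runs a contradiction argument with a normalized sequence, using Rellich--Kondrachov compactness together with the finite-dimensionality and analyticity of $\ker\mathcal D$ (the conformal Killing fields) to force the limit to vanish. So your approach matches the one in the cited source.

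One minor technical point worth making explicit: when you pass to the limit in $\int_\Omega r_n|v_n|$ via ``weak-$L^\gamma$ times strong-$L^{\gamma'}$'', you are using $|v_n|\to|v|$ strongly in $L^{\gamma'}(\Omega)$, hence the compact embedding $W^{1,p}(\Omega)\hookrightarrow L^{\gamma'}(\Omega)$. For $p<3$ this requires $\gamma'<p^*=3p/(3-p)$, i.e.\ $\gamma>3p/(4p-3)$. The lemma as quoted in the paper suppresses this compatibility between $p$ and $\gamma$, but some such condition is needed for the argument (and indeed for $\int_\Omega r|v|$ to be finite for arbitrary $v\in W^{1,p}$); it is present in the original reference.
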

We refer \cite[Theorem 11.22, Chapter 11.10]{Feireisl-Novotny-book} for the proof of the above lemma.

\section*{Acknowledgments} 
The authors  warmly thank the anonymous reviewers for their useful comments to improve the initial version of the
paper. 
K. Bhandari and {\v{S}}. Ne{\v{c}}asov{\'{a}} have been supported by the Czech-Korean project GA\v{C}R/22-08633J 
and the Praemium Academiae of {\v{S}}. Ne{\v{c}}asov{\'{a}}. The work of B. Huang is supported by the grant from National Natural Science Foundation of China (grant No.11901148), the Fundamental Research Funds for the Central Universities (grant no. JZ2022HGTB0257), and the China Scholarship Council (grant no. 202106695016). 
The Institute of Mathematics, CAS is supported by RVO:67985840.


\begin{thebibliography}{10}
	\newcommand{\enquote}[1]{#1}
	
	\bibitem{Danica}
	D.~Basari{\'{c}}, E.~Feireisl, M.~Luk{\'{a}}{\v{c}}ov{\'{a}}-Medvid'ov{\'a},
	H.~Mizerov{\'{a}} and Y.~Yuan, \enquote{Penalization method for the
		{N}avier-{S}tokes-{F}ourier system}, {\it ESAIM Math. Model. Numer. Anal.}
	\textbf{56} (2022) 1911--1938.
	
	\bibitem{buet2004asymptotic}
	C.~Buet and B.~Despres, \enquote{Asymptotic analysis of fluid models for the
		coupling of radiation and hydrodynamics}, {\it J. Quant. Spectrosc. Radiat.
		Transf.} \textbf{85} (2004) 385--418.
	
	\bibitem{Chauduri-Feireisl}
	N.~Chaudhuri and E.~Feireisl, \enquote{Navier-{S}tokes-{F}ourier system with
		{D}irichlet boundary conditions}, {\it Appl. Anal.} \textbf{101} (2022)
	4076--4094.
	
	\bibitem{Cox}
	J.~P. Cox and R.~Giuli, {\it Principles of stellar structure, I.,II.} (New
	York: Gordon and Breach, 1968).
	
	\bibitem{Lions-Diperna}
	R.~J. DiPerna and P.-L. Lions, \enquote{Ordinary differential equations,
		transport theory and {S}obolev spaces}, {\it Invent. Math.} \textbf{98}
	(1989) 511--547.
	
	\bibitem{ducomet2001simplified}
	B.~Ducomet, \enquote{Simplified models of quantum fluids in nuclear physics},
	{\it Math. Bohem.} \textbf{126} (2001) 323--336.
	
	\bibitem{Ducoment-Sarka-1}
	B.~Ducomet, M.~Caggio, {\v{S}}.~Ne{\v{c}}asov{\'{a}} and M.~Pokorn{\'{y}},
	\enquote{The rotating {N}avier-{S}tokes-{F}ourier-{P}oisson system on thin
		domains}, {\it Asymptot. Anal.} \textbf{109} (2018) 111--141.
	
	\bibitem{Eduard-Ducomet-2}
	B.~Ducomet and E.~Feireisl, \enquote{On the dynamics of gaseous stars}, {\it
		Arch. Ration. Mech. Anal.} \textbf{174} (2004) 221--266.
	
	\bibitem{Eduard-Ducomet-1}
	B.~Ducomet, E.~Feireisl, H.~Petzeltov{\'{a}} and I.~Stra{\v{s}}kraba,
	\enquote{Global in time weak solutions for compressible barotropic
		self-gravitating fluids}, {\it Discrete Contin. Dyn. Syst.} \textbf{11}
	(2004) 113--130.
	
	\bibitem{Ghatak-et-al}
	S.~Eliezer, A.~Ghatak and H.~Hora, {\it An introduction to equations of states,
		theory and applications} (Cambridge University Press, 1986).
	
	\bibitem{Feireisl-NSF-2}
	E.~Feireisl, {\it Dynamics of viscous compressible fluids}, volume~26 of {\it
		Oxford Lecture Series in Mathematics and its Applications} (Oxford University
	Press, Oxford, 2004).
	
	\bibitem{Feireisl-NSF-1}
	E.~Feireisl, \enquote{On the motion of a viscous, compressible, and heat
		conducting fluid}, {\it Indiana Univ. Math. J.} \textbf{53} (2004)
	1707--1740.
	
	\bibitem{Sarka-Kreml-Neustupa-Feireisl-Stebel}
	E.~Feireisl, O.~Kreml, {\v{S}}.~Ne{\v{c}}asov{\'{a}}, J.~Neustupa and
	J.~Stebel, \enquote{Weak solutions to the barotropic {N}avier-{S}tokes system
		with slip boundary conditions in time dependent domains}, {\it J.
		Differential Equations} \textbf{254} (2013) 125--140.
	
	\bibitem{Feireisl-Novotny-book}
	E.~Feireisl and A.~Novotn{\'{y}}, {\it Singular limits in thermodynamics of
		viscous fluids}, Advances in Mathematical Fluid Mechanics (Birkh{\"{a}}user
	Verlag, Basel, 2009).
	
	\bibitem{Feireisl2012weak}
	E.~Feireisl and A.~Novotn{\'{y}}, \enquote{Weak-strong uniqueness property for
		the full {N}avier-{S}tokes-{F}ourier system}, {\it Arch. Ration. Mech. Anal.}
	\textbf{204} (2012) 683--706.
	
	\bibitem{Feireisl2001existence}
	E.~Feireisl, A.~Novotn{\'{y}} and H.~Petzeltov{\'{a}}, \enquote{On the
		existence of globally defined weak solutions to the {N}avier-{S}tokes
		equations}, {\it J. Math. Fluid Mech.} \textbf{3} (2001) 358--392.
	
	\bibitem{Feireisl-hana}
	E.~Feireisl and H.~Petzeltov{\'{a}}, \enquote{On integrability up to the
		boundary of the weak solutions of the {N}avier-{S}tokes equations of
		compressible flow}, {\it Comm. Partial Differential Equations} \textbf{25}
	(2000) 755--767.
	
	\bibitem{HBZ-2022}
	B.~Huang, {\v{S}}.~Ne{\v{c}}asov{\'{a}} and L.~Zhang, \enquote{On the
		compressible micropolar fluids in a time-dependent domain}, {\it Ann. Mat.
		Pura Appl. (4)} \textbf{201} (2022) 2733--2795.
	
	\bibitem{Kalousek2023existence}
	M.~Kalousek, S.~Mitra and {\v{S}}.~Ne{\v{c}}asov{\'a}, \enquote{Existence of
		weak solution for a compressible multicomponent fluid structure interaction
		problem}, {\it arXiv preprint arXiv:2301.11216} (to appear in {\em J. Math.
		Pures Appl.}).
	
	\bibitem{Sarka-et-al-ZAMP}
	O.~Kreml, V.~M{\'{a}}cha, {\v{S}}.~Ne{\v{c}}asov{\'{a}} and
	A.~Wr{\'{o}}blewska-Kami{\'{n}}ska, \enquote{Flow of heat conducting fluid in
		a time-dependent domain}, {\it Z. Angew. Math. Phys.} \textbf{69} (2018)
	Paper No. 119, 27.
	
	\bibitem{sarka-aneta-al-JMPA}
	O.~Kreml, V.~M{\'{a}}cha, {\v{S}}.~Ne{\v{c}}asov{\'{a}} and
	A.~Wr{\'{o}}blewska-Kami{\'{n}}ska, \enquote{Weak solutions to the full
		{N}avier-{S}tokes-{F}ourier system with slip boundary conditions in time
		dependent domains}, {\it J. Math. Pures Appl. (9)} \textbf{109} (2018)
	67--92.
	
	\bibitem{KNP-20-springer}
	O.~Kreml, {\v{S}}.~Ne{\v{c}}asov{\'a} and T.~Piasecki, \enquote{Compressible
		navier-stokes system on a moving domain in the {$L_p$-$L_q$} framework}, in
	{\it Waves in Flows: The 2018 Prague-Sum Workshop Lectures} (Springer, 2020),
	pp. 127--158.
	
	\bibitem{KNP-20}
	O.~Kreml, {\v{S}}.~Ne{\v{c}}asov{\'{a}} and T.~Piasecki, \enquote{Local
		existence of strong solutions and weak-strong uniqueness for the compressible
		{N}avier-{S}tokes system on moving domains}, {\it Proc. Roy. Soc. Edinburgh
		Sect. A} \textbf{150} (2020) 2255--2300.
	
	\bibitem{Lions1996mathematical}
	P.-L. Lions, {\it Mathematical Topics in Fluid Mechanics: Volume 2:
		Compressible Models}, volume~2 (Oxford University Press, 1996).
	
	\bibitem{Macha-Muha-Necasova-Roy-Srjdan}
	V.~M{\'{a}}cha, B.~Muha, {\v{S}}.~Ne{\v{c}}asov{\'{a}}, A.~Roy and
	S.~Trifunovi{\'{c}}, \enquote{Existence of a weak solution to a nonlinear
		fluid-structure interaction problem with heat exchange}, {\it Comm. Partial
		Differential Equations} \textbf{47} (2022) 1591--1635.
	
	\bibitem{S-N-Shorey-book}
	S.~N. Shore, {\it An introduction to astrophysical hydrodynamics} (New York:
	Academic Press, 1992).
	
	\bibitem{StoCar}
	Y.~Stokes and G.~Carrey, \enquote{On generalised penalty approaches for slip,
		free surface and related boundary conditions in viscous flow simulation},
	{\it Inter. J. Numer. Meth. Heat Fluid Flow} \textbf{21} (2011) 668--702.
	
	
	\bibitem{KMNPW-L}
	O.~Kreml, V.~Macha, {\v{S}}.~Ne{\v{c}}asov{\'{a}}, T.~Piasecki and A.~Wr{\'{o}}blewska-Kami{\'{n}}ska, 
	{\it Mathematical theory of
		compressible fluids on moving domains}, Advances in Mathematical Fluid
	Mechanics, Lecture Notes in Mathematical Fluid Mechanics,
	Springer (in preparation).
	
	
\end{thebibliography}

\end{document}